\documentclass[11pt,a4paper]{amsart}

\usepackage{
    amsfonts, amsmath, amssymb, ascmac,
    empheq, enumitem,
    hyperref,
    mathrsfs, mathtools, mleftright,
    tikz
}

\setlist[enumerate]{%
  topsep=2pt,
  leftmargin=6ex
}
\setlist[itemize]{%
  topsep=2pt,
  leftmargin=6ex
}

\usetikzlibrary{cd}% dvipdfmx

\newtheorem{thm}{Theorem}[section]
\newtheorem{cor}[thm]{Corollary}
\newtheorem{lem}[thm]{Lemma}
\newtheorem{prop}[thm]{Proposition}

\theoremstyle{definition}
\newtheorem{defn}[thm]{Definition}
\newtheorem{exam}[thm]{Example}
\newtheorem*{defn*}{Definition}
\newtheorem*{exam*}{Example}

\theoremstyle{remark}
\newtheorem{rem}[thm]{Remark}

\newcommand{\inn}[1]{\langle{#1}\rangle}

\newcommand{\longinto}{\lhook\joinrel\longrightarrow}
\newcommand{\longonto}{\relbar\joinrel\twoheadrightarrow}

\newcommand{\wlim}{\text{weak*-}\lim}

\newcommand{\C}{\ifmmode C^{\ast}\else C${}^{\ast}$\fi}
\newcommand{\W}{\ifmmode W^{\ast}\else W${}^{\ast}$\fi}
\newcommand{\I}{\mathrm{I}}
\newcommand{\II}{\mathrm{II}}
\newcommand{\III}{\mathrm{III}}

\renewcommand{\hat}{\widehat}
\renewcommand{\mod}{\operatorname{mod}}
\renewcommand{\tilde}{\widetilde}

\DeclareMathOperator{\Ad}{Ad}

\DeclareMathOperator{\Aut}{Aut}

\DeclareMathOperator{\clInt}{\overline{Int}}

\DeclareMathOperator{\diam}{diam}

\DeclareMathOperator{\Hom}{Hom}
\DeclareMathOperator{\im}{im}

\DeclareMathOperator{\Int}{Int}
\DeclareMathOperator{\id}{id}

\DeclareMathOperator{\Map}{Map}

\DeclareMathOperator{\Out}{Out}

\DeclareMathOperator{\Tr}{Tr}

\newcommand{\Obss}{\Ob_{\mathrm{ss}}}
\newcommand{\pt}{\mathrm{pt}}
\newcommand{\U}{\mathcal{U}}
\newcommand{\Z}{\mathcal{Z}}
\DeclareMathOperator{\Ob}{Ob}
\DeclareMathOperator{\Gau}{Gauge}% `\Gauge' already defined in hyperref?
\DeclareMathOperator{\Spin}{Spin}
\DeclareMathOperator{\String}{String}

\title{Compact group Rohlin actions and $G$-kernels on von Neumann algebras}
\author{Takumi Nishihara}
\address{RIMS, Kyoto University, 606-8502 Japan}
\email{nishihar@kurims.kyoto-u.ac.jp}
\subjclass[2020]{Primary~46L40; Secondary~46L55, 20J06}
\date{February 10, 2026}

\begin{document}

\begin{abstract}
    We provide a new construction of a topological group model for the string group of a compact, simple, and simply-connected Lie group, by solving the obstruction realization problem for compact group $G$-kernels on full factors.
    Furthermore, we introduce the Rohlin property for actions and cocycle actions of compact groups in order to establish cohomology vanishing theorems.
\end{abstract}

\maketitle

\renewcommand{\thethm}{\Alph{thm}}
\section*{Introduction}

Let $G$ be a locally compact group.
A $G$-kernel on a factor $M$ is a group homomorphism $\kappa$ from a locally compact group $G$ into $\Out(M)$ that admits a measurable lift $\alpha\colon G\to\Aut(M)$.
Given a $G$-kernel $\kappa$, whether a lift $\alpha$ can be chosen as a genuine action is a classical problem.
For each $G$-kernel $\kappa$, there is an associated 3-cohomology class $\Ob(\kappa)$ in a measurable group cohomology $\mathrm{H}^3(G;\mathbb{T})$ which serves as the obstruction to lifting $\kappa$ to a genuine action.
Sutherland \cite{Su80} has shown that this class completely determines the obstruction to a lifting action when $M$ is properly infinite.
In this context, we are interested in the following questions.
\begin{enumerate}
    \item Which 3-cohomology classes in $\mathrm{H}^3(G;\mathbb{T})$ is realized as the obstruction for a $G$-kernel?
    \item Does every $G$-kernel on a finite factor with trivial obstruction class lift to an action?
\end{enumerate}

We first address the realization problem of a 3-cohomology class in $\mathrm{H}^3(G;\mathbb{T})$ as an obstruction.
For discrete groups, Jones \cite{Jo79} has shown that every 3-cohomology class is realized as the obstruction for a $G$-kernel on the AFD $\II_1$ factor.
% finite group: \cite{Co77}.
For compact groups, this problem has been solved by Wassermann \cite{Wa06} on the AFD $\II_1$ factor.
More recently, by using conformal field theory, Bischoff and Karmakar \cite{BK24} have constructed $\mathbb{T}^n$-kernels on the AFD factor of type $\III_1$ realizing a given class in $\mathrm{H}^3(\mathbb{T}^n;\mathbb{T})$.
Marrakchi \cite{Ma25} has proved that if $G$ is a compact connected abelian group, every class in $\mathrm{H}^3(G;\mathbb{T})$ can be realized as an obstruction for a $G$-kernel on a full $\II_1$ factor.
Recall that a factor $M$ is said to be full if the inner automorphism group $\Int(M)$ is a closed subgroup of $\Aut(M)$.
By generalizing Wassermann's construction, we extend these results as follows.

\begin{thm}\label{mainA}
    Let $G$ be a compact group and let $c\colon G^3\to\mathbb{T}$ be a measurable 3-cocycle.
    Then there exist a full $\II_1$ factor $M$ and a $G$-kernel $\kappa\colon G\to\Out(M)$ whose lifting obstruction is $\Ob(\kappa)=[c]$.
\end{thm}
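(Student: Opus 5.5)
The plan is to generalize Wassermann's construction, replacing the AFD $\II_1$ factor by a free-product type algebra so that the resulting factor is full. The standard mechanism for producing a $G$-kernel with prescribed obstruction is to pass through a projective representation of a group extension. Since $\mathrm{H}^3(G;\mathbb{T})\cong \mathrm{H}^4(BG;\mathbb{Z})$ for compact $G$ via Moore cohomology, I would first realize $[c]$ through a crossed-module / 2-group datum. Concretely, I would look for:
\begin{itemize}
    \item a Polish (or locally compact) group $H$ with a surjection $H\to G$ whose kernel $A$ is abelian and central up to twisting;
    \item a Borel 2-cocycle $\omega\in Z^2(H;\mathbb{T})$ with $\omega|_{A\times A}$ trivial in cohomology on $A$.
\end{itemize}
The transgression of $\omega$ along the extension $1\to A\to H\to G\to 1$ should produce $[c]$. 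For compact $G$ I would expect to take $A$ to be a (possibly infinite-dimensional) torus or $\mathbb{T}$-valued function group. I would arrange this by embedding $G$ into a product of unitary groups and pulling back universal classes, or more directly via the standard "inflation" trick: $H=G\ltimes \Map(G,\mathbb{T})$-type group with $\omega$ built from $c$ by the formula $\omega((g,f),(h,f'))=c(\cdot,g,h)$ pointwise.

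Given such data, the next step is to build a factor with an $\omega$-projective action of $H$ on which $A$ acts by inner automorphisms, implemented by unitaries $u_a$ satisfying $u_au_b=\mu(a,b)u_{ab}$ compatibly with $\omega$. Then $\kappa\colon G\cong H/A\to\Out(M)$ is well defined. A measurable section $s\colon G\to H$ gives a lift $\alpha_g=\beta_{s(g)}$ with $\alpha_g\alpha_h=\Ad(u(g,h))\alpha_{gh}$, where $u(g,h)$ is the implementing unitary of $s(g)s(h)s(gh)^{-1}\in A$. The associativity defect of $u$ computes to $c$ up to coboundary, by the same computation as in Wassermann's paper. For fullness I would take $M=(P\bar{\otimes}B(\ell^2))$ replaced by a free product
\[
M=\bigl(L^\infty(G)\rtimes_{\lambda,\omega}H\bigr)' \cap\cdots \quad\text{or more simply}\quad M=N*L(\mathbb{F}_2),
\]
where $N$ is Wassermann's construction (a twisted crossed product of the group von Neumann algebra of $A$) and $H$ acts trivially on the free group factor. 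Then $\beta$ extends to the free product via the free product of actions, the $A$-part stays inner (its unitaries lie in $N$), and $M$ is a full $\II_1$ factor by standard free product results (Ueda / Barnett: a free product of a diffuse tracial algebra with $L(\mathbb{F}_2)$ is full).

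The main obstacle I expect is twofold:
\begin{itemize}
    \item \emph{Continuity.} The construction must give a continuous homomorphism $H\to\Aut(M)$, so that $\kappa$ is a genuine measurable $G$-kernel, with a measurable lift. I would handle this by making all actions point-norm continuous on a separable predual from the start, using the compactness of $G$.
    \item \emph{Nontriviality of the obstruction.} One must check that the obstruction really equals $[c]$ and is not killed by the extra freedom of choosing implementing unitaries modulo $\mathbb{T}$. On a full factor $\Int(M)$ is closed and $\mathcal{U}(M)/\mathbb{T}\cong\Int(M)$ as Polish groups. Hence the obstruction class in Moore cohomology is well defined, and it is computed by the explicit cocycle identity; this is where I would spend most of the verification.
\end{itemize}
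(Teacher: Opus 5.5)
There is a genuine gap, and it sits exactly in the step that is supposed to produce fullness.

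You take Wassermann's lift $\alpha$ on $N$, with $\alpha_g\alpha_h=\Ad(u_{g,h})\alpha_{gh}$ and $u_{g,h}\in\U(N)$. You then put $M=N*L(\mathbb{F}_2)$ and $\beta_g=\alpha_g*\id$, and assert that the inner part ``stays inner because its unitaries lie in $N$''. This is false.

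\begin{itemize}
\item On $M$ one has $\beta_g\beta_h\beta_{gh}^{-1}=\Ad(u_{g,h})*\id$. This fixes $L(\mathbb{F}_2)$ pointwise, whereas $\Ad(u_{g,h})$ computed in $M$ moves $L(\mathbb{F}_2)$.
\item Suppose $\Ad(u_{g,h})*\id=\Ad(w)$ with $w\in\U(M)$. Then $w\in L(\mathbb{F}_2)'\cap M$. Since $L(\mathbb{F}_2)$ is a diffuse free factor, its relative commutant in $N*L(\mathbb{F}_2)$ is contained in $L(\mathbb{F}_2)$. So $w\in\mathbb{C}$, and hence $u_{g,h}\in\mathbb{T}$.
\item Consequently $\varepsilon_M\circ\beta$ is a homomorphism only when $\alpha_g\alpha_h=\alpha_{gh}$ for a.e.\ $(g,h)$, i.e.\ only when $[c]=0$. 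For every nontrivial class your $\beta$ defines no $G$-kernel on $M$ at all.
\item You cannot repair this by letting $\beta_g$ act on $L(\mathbb{F}_2)$ through $\Ad(u_{g,h})$, because $u_{g,h}$ does not normalize $L(\mathbb{F}_2)$.
\end{itemize}

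Separately, the first half of the proposal is not well-formed. The formula $\omega((g,f),(h,f'))=c(\cdot,g,h)$ is $\U(L^\infty(G))$-valued rather than $\mathbb{T}$-valued. Also, the factor carrying the projective $H$-action is never constructed beyond an appeal to Wassermann.

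The missing idea is that the non-amenable ingredient has to be inserted compatibly with the cocycle. The implementing unitaries must live in a subalgebra over which the ``free'' part is amalgamated.

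The paper does this by replacing $\mathbb{Z}$ in Wassermann's $L^\infty(G)^{\otimes\mathbb{Z}}\rtimes\mathbb{Z}$ by $\mathbb{F}_n$ with $n\ge 2$. On $M=L^\infty(G)^{\otimes\mathbb{F}_n}\rtimes\mathbb{F}_n$ it makes the following choices.
\begin{itemize}
\item It sets $u_{g,h}((g_s)_s)=c(g_e,g,h)$, which lies in the base.
\item It lets $\alpha_g$ be the diagonal right translation $\rho_g$ on the base, and $\alpha_g(\lambda_s)=v_{g,s}\lambda_s$.
\item Here $v_g$ is the unique cocycle for the Bernoulli shift $\sigma$ with prescribed generator values $v_{g,s_i}((g_s)_s)=c(g_eg_{s_i}^{-1},g_{s_i},g)$. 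Freeness of $\mathbb{F}_n$ is precisely what allows these values to be prescribed arbitrarily.
\end{itemize}
The 3-cocycle identity gives $\rho_g(v_{h,s_i})v_{g,s_i}=u_{g,h}v_{gh,s_i}\sigma_{s_i}(u_{g,h}^{\ast})$. Hence $\alpha_g\alpha_h=\Ad(u_{g,h})\alpha_{gh}$ on all of $M$, with obstruction $[c]$. Here $\Ad(u_{g,h})$ really does move the $\lambda_{s_i}$, and the $v_g$ are designed to absorb exactly that.

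Fullness then comes from Vaes--Verraedt: Bernoulli crossed products by the non-inner-amenable group $\mathbb{F}_n$ are full.
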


Let $\kappa$ be a $G$-kernel on a full factor $M$.
Assume further that $\kappa$ has a continuous local section to $\Aut(M)$.
Let $\tilde{G}=\varepsilon_M^{-1}(\kappa(G))<\Aut(M)$ denote the inverse image of $\kappa(G)$ under the canonical quotient $\varepsilon_M\colon\Aut(M)\to\Out(M)$.
Then $\tilde{G}\to G$ is a principal $\Int(M)=P\U(M)$-bundle over $G$.
This bundle structure plays a crucial role in the geometric realization of certain topological groups, specifically in the context of string groups.

The string group $\String(n)$ is defined as the 3-connected cover of the spin group $\Spin(n)$.
More generally, $\String(G)$ denotes the 3-connected cover of a compact, simple, and simply-connected Lie group $G$.
Several geometric constructions of $\String(G)$ are known.
For instance, Stolz \cite{St96} has constructed a topological model for $\String(G)$ as an extension of $G$ by the gauge transformation group $\Gau(P)$ where $P$ is a principal $P\U(\mathcal{H})$-bundle.
Nikolaus, Sachse, and Wockel \cite{NSW13} have provided an infinite-dimensional Lie group model for $\String(G)$.
However, an explicit construction of the underlying principal $P\U(\mathcal{H})$-bundle over $G$ has remained elusive.
We resolve this problem by combining the theory of principal bundles with that of $G$-kernels.

\begin{cor}\label{mainB}
    Let $G$ be a compact, simple, and simply-connected Lie group.
    Then there exist a full $\II_1$ factor $M$ and a short exact sequence of Polish groups $P\U(M)\longinto\tilde{G}\longonto G$, which is a Hurewicz fibration.
    The group $\tilde{G}$ is a Polish group model for $\String(G)$.
\end{cor}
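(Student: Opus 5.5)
We plan to derive Corollary~\ref{mainB} from Theorem~\ref{mainA}, using the principal bundle picture from the introduction.

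\textbf{Step 1: choose the kernel.}
Since $G$ is compact, simple and simply connected, $\mathrm{H}^4(BG;\mathbb{Z})\cong\mathbb{Z}$. By the Moore/Wigner comparison, $\mathrm{H}^4(BG;\mathbb{Z})\cong\mathrm{H}^3(G;\mathbb{T})$ for the measurable cohomology of a compact Lie group. We pick a measurable 3-cocycle $c$ whose class is a generator. Theorem~\ref{mainA} then gives a full $\II_1$ factor $M$ and a $G$-kernel $\kappa$ with $\Ob(\kappa)=[c]$.

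\textbf{Step 2: build the extension.}
Set $\tilde G=\varepsilon_M^{-1}(\kappa(G))$. This gives the exact sequence $\Int(M)\to\tilde G\to G$, and $\Int(M)=P\U(M)$ because $M$ is a factor. Fullness makes $\Int(M)$ closed, so $\Out(M)$ is Polish. Since $\kappa$ is a measurable homomorphism between Polish groups, it is continuous. Hence $\tilde G$ is a closed subgroup of $\Aut(M)$ and therefore Polish.

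The hardest step is to show that $\tilde G\to G$ is a locally trivial principal $P\U(M)$-bundle, i.e.\ that $\kappa$ admits a continuous local section into $\Aut(M)$. My first attempt is to use the construction behind Theorem~\ref{mainA}, which generalizes Wassermann's, and check that the lift it produces is continuous near the identity. The fallback is a Michael-type selection theorem for the quotient $\Aut(M)\to\Out(M)$ restricted to the finite-dimensional manifold $\kappa(G)$. Once local triviality holds, local triviality over a paracompact base gives the Hurewicz fibration property.

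\textbf{Step 3: identify the homotopy type.}
$\U(M)$ is contractible for a $\II_1$ factor (Popa–Takesaki), so $P\U(M)=\U(M)/\mathbb{T}$ is a $K(\mathbb{Z},2)$. The long exact sequence of the fibration then gives $\pi_k(\tilde G)\cong\pi_k(G)$ for $k\ne 2,3$. In low degrees it reads
\[
0\to\pi_3(\tilde G)\to\pi_3(G)\xrightarrow{\partial}\pi_2(P\U(M))\cong\mathbb{Z}\to\pi_2(\tilde G)\to 0 .
\]
The bundle is classified by a map $G\to BP\U(M)\simeq K(\mathbb{Z},3)$, i.e.\ by a class in $\mathrm{H}^3(G;\mathbb{Z})\cong\mathbb{Z}$.

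It remains to show that this class is the transgression of $\Ob(\kappa)$ and hence a generator. To check this, note that the obstruction cocycle comes from the lifts $\alpha_g\alpha_h=\Ad(u_{g,h})\alpha_{gh}$ together with the associativity defect. This is exactly the Čech/simplicial description of the bundle's Dixmier–Douady-type class, transgressed from $\mathrm{H}^4(BG)$ to $\mathrm{H}^3(G)$.

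With a generator, $\partial$ is an isomorphism $\pi_3(G)\cong\mathbb{Z}\to\mathbb{Z}$. Hence $\pi_2(\tilde G)=0$ and $\pi_3(\tilde G)=0$. The map $\tilde G\to G$ is therefore the 3-connected cover, and $\tilde G$ is a model for $\String(G)$.
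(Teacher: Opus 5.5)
Your outline has the same architecture as the paper: a generator of $\mathrm{H}^3(G;\mathbb{T})\cong\mathbb{Z}$, Theorem~\ref{mainA}, $\tilde G=\varepsilon_M^{-1}(\kappa(G))$, the identification of the Dixmier--Douady class with $\Ob(\kappa)$, and then the long exact sequence. But the step you flag as hardest is left open, and as set up it does not go through.

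\textbf{The gap.} In Step 1 you take an \emph{arbitrary} measurable representative $c$ of the generator. The lift produced by Theorem~\ref{thm:main1} is $\alpha_g=\hat\sigma_{v_g}\circ\rho_g$ with $v_{g,s_i}((g_s)_s)=c(g_eg_{s_i}^{-1},g_{s_i},g)$. Its continuity in $g$ is therefore governed by the map $g\mapsto c(\cdot,\cdot,g)\in L^\infty(G\times G)$ with the strong ($L^2$) topology. For a general measurable cocycle this map is only measurable: continuous on large compacta by Lusin, but not on any open set. So "checking that the lift is continuous near the identity" fails.

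The missing idea is that the \emph{representative} must be chosen with regularity in the last variable. You need $k\mapsto c(\cdot,\cdot,k)$ to be $L^2$-continuous on some nonempty open set, which is exactly the hypothesis of Remark~\ref{rem:localsection}; translating by group elements then gives local sections everywhere. The paper supplies this in a separate lemma:
\begin{itemize}
\item from the NSW extension $\Gau(P)\hookrightarrow\tilde G\twoheadrightarrow G$ and the central extension $\tilde{\Gau}(P)$, it takes a section continuous on an open dense $U\subset G$;
\item it builds a cocycle representing the generator that is continuous on the open dense set $W=\{(g,h,k): g,h,k,gh,hk,ghk\in U\}$;
\item it proves $L^2$-continuity of the slices $k\mapsto c(\cdot,\cdot,k)$ by uniform convergence on compact subsets of the full-measure slices $W_k$.
\end{itemize}
Without such an input Step 2 is unproved. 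Step 3 depends on it too, since the \v{C}ech description of the Dixmier--Douady class needs continuous local sections $\phi_i$ of $\tilde G\to G$.

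\textbf{The fallback.} A Michael-type finite-dimensional selection theorem over $\kappa(G)$, with $d=\dim G$, requires the fibres (the cosets of $P\U(M)$ in $\Aut(M)$) to be equi-$LC^{d-1}$. That amounts to local $(d-1)$-connectedness of $P\U(M)$ in the strong topology, which neither the paper nor your proposal establishes.

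\textbf{What is fine once local sections exist.} Hurewicz uniformization over the paracompact base is correct. The transgression identification is correct; you assert it, while the paper proves it as a separate proposition using the pair $(E_\bullet G,G)$. Your long exact sequence computation is correct and more explicit than the paper's: $\partial\colon\pi_3(G)\to\pi_2(P\U(M))$ is an isomorphism, so $\pi_2(\tilde G)=\pi_3(\tilde G)=0$.

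\textbf{A smaller correction.} The Popa--Takesaki contractibility theorem is proved for McDuff $\II_1$ factors (and properly infinite algebras), and a full factor is never McDuff. For the full $\II_1$ factor here you need Jekel's theorem that $\U(M)$ is SOT-contractible for every $\II_1$ factor, which is what the paper cites.
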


Next we treat $G$-kernels with trivial obstruction class, which is identified with cocycle actions.
To handle these actions, particularly for cohomology vanishing problems, we need a stronger form of outerness analogous to the Rohlin property known for discrete amenable group actions and flows.

The classification of group actions is a fundamental subject in the theory of operator algebras.
Connes, Jones, and Ocneanu pioneer the study of asymptotic properties via central sequence algebras.
In the study of flows (i.e., $\mathbb{R}$-actions), a higher degree of ``outerness'' is required for classification.
Consequently, the Rohlin property has been introduced by Kishimoto \cite{Ki96} for \C-algebras and by Kawamuro \cite{Ka00} for the AFD $\II_1$ factor respectively.
Finally, Masuda and Tomatsu \cite{MT16} have classified Rohlin flows on general von Neumann algebras.

On the other hand, actions of compact abelian groups on AFD factors have been studied by Jones, Takesaki, and Kawahigashi \cite{JT84, KT92}.
Later, Masuda and Tomatsu \cite{MT07, MT10} have completed the classification of minimal actions of compact groups on AFD factors.
Their approach relies on dual actions, by exploiting the fact that the dual of a compact group is ``discrete'' in a suitable sense, which enables the application of classification techniques for discrete amenable group actions.
In the context of \C-algebras, Izumi \cite{Iz04a, Iz04b} has introduced the Rohlin property for finite group actions, Hirshberg and Winter \cite{HW07} also has introduced the Rohlin property for compact group actions on unital \C-algebras, and Gardella \cite{Ga17} has generalized it to compact group actions on $\sigma$-unital \C-algebras.
In the second half of this paper, we extend these notions to the von Neumann algebra setting.
Unlike the dual action approach, we focus directly on the action on the equicontinuous part of the central sequence algebra.
Specifically, we introduce the Rohlin property for compact group actions and cocycle actions on von Neumann algebras, and characterize such actions by following the strategy of \cite{MT16}.

\begin{thm}\label{mainC}
    Let $\alpha,\beta\colon G\curvearrowright M$ be Rohlin actions of a compact group $G$ on a factor $M$.
    If $\beta_g\circ\alpha_g^{-1}\in\clInt(M)$ for all $g\in G$, then they are approximately inner conjugate, i.e., there exists $\sigma\in\clInt(M)$ such that $\sigma\circ\alpha\circ\sigma^{-1}=\beta$.
\end{thm}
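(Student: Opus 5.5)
The proof follows the Masuda--Tomatsu strategy for Rohlin flows \cite{MT16}. It has three stages: a cohomology vanishing theorem, a reduction to cocycle conjugacy, and an Evans--Kishimoto type intertwining argument. Throughout we use the Rohlin property as it will be defined later. Roughly, an action $\alpha$ is Rohlin if the equicontinuous part $M_{\omega,\alpha}$ of the central sequence algebra contains a copy of $L^\infty(G)$ (or $C(G)$) on which $\alpha^\omega$ acts by translation.

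\textbf{Step 1 (cohomology vanishing).} We first show that every $\alpha$-cocycle $u$ for a Rohlin action is approximately a coboundary. Precisely, for every $\varepsilon>0$ and finite set $\Phi\subset M_*$ there is $w\in\U(M)$ with $u_g\approx w\alpha_g(w^*)$ uniformly in $g$ with respect to $\Phi$. We also need a version with small perturbation: if $u$ is close to $1$, then $w$ can be chosen close to $1$. The proof uses a Rohlin projection-valued measure, i.e.\ a unital embedding $\pi\colon L^\infty(G)\to M_{\omega,\alpha}$ with $\alpha^\omega_g\circ\pi=\pi\circ\lambda_g$. Formally we set $w=\int_G u_h^*\,d\pi(h)$, an ``averaged'' unitary. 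The equivariance gives $w\alpha_g(w^*)\approx u_g$, via the cocycle identity $u_{gh}=u_g\alpha_g(u_h)$ and the fact that elements of $M_{\omega,\alpha}$ asymptotically commute with $M$. To make the integral rigorous, we approximate $G$ by a finite Borel partition of small diameter and use continuity of $g\mapsto u_g$. Compactness of $G$ gives the uniform-in-$g$ control. One then passes from the ultrapower to $M$ by a reindexation argument. This step also yields the stability statement: the cocycle $u$ is a coboundary in the ultrapower $M^\omega$.

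\textbf{Step 2 (reduction to cocycle perturbations).} By hypothesis, $\beta_g\alpha_g^{-1}\in\clInt(M)$ for every $g$. We want to show that $\beta$ is approximately inner cocycle conjugate to $\alpha$. Following \cite{MT16}, we first establish a ``1-cohomology in the ultrapower'' statement. For a Rohlin action $\alpha$ and approximately inner $\theta$, the action $\theta\alpha\theta^{-1}$ is again Rohlin and has the same approximate invariants. From the condition on $\beta_g\alpha_g^{-1}$, we construct, using compactness of $G$ and a measurable selection, unitaries $v_g\in M^\omega$ with $\beta_g=\Ad v_g\circ\alpha_g$ on $M$. Then $(v_g)$ is a $1$-cocycle for $\alpha^\omega$ up to elements of $M_\omega$. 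Applying Step 1 in the ultrapower shows that $\beta$ is approximated pointwise, uniformly on compacta, by $\Ad w\circ\alpha\circ\Ad w^*$ with $w\in\U(M)$.

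\textbf{Step 3 (intertwining).} Finally we run an Evans--Kishimoto intertwining argument. We choose a dense sequence in $M_*$ and build, alternately, unitaries $w_n$ and cocycles. These satisfy $\Ad W_n\circ\alpha\circ\Ad W_n^*\to\beta$, where $W_n=w_n\cdots w_1$, and each successive correction $w_{n+1}$ is close to $1$ on larger finite sets. The small-perturbation form of Step 1 is what controls these corrections. This makes $\Ad W_n$ converge in $\Aut(M)$ to some $\sigma\in\clInt(M)$, and then $\sigma\alpha\sigma^{-1}=\beta$.

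The main obstacle is Step 1, and specifically two points in it: making the Rohlin averaging rigorous for a nondiscrete compact $G$, and obtaining the small-perturbation estimate uniform in $g\in G$. I would handle both by working with the equicontinuous part so that $\alpha^\omega$ is a continuous action, using finite partitions of unity on $G$ subordinate to a small neighborhood, and estimating with the norms $\|\cdot\|_\varphi^\sharp$ as in \cite{MT16}.
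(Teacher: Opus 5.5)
There is a genuine gap: the Rohlin property of $\beta$ is never used anywhere in your proposal. Step 3 is a one-sided intertwining, in which only $\alpha$ is conjugated by $\Ad W_n$ while $\beta$ stays a fixed target. Steps 1--2 only use Rohlin towers for $\alpha$. No argument of this shape can prove the theorem. For a nontrivial compact $G$, let $\alpha$ be the (faithful) Rohlin action on $R$ constructed in Proposition \ref{prop:minRohlin}, and let $\beta=\id$. Since $\Aut(R)=\clInt(R)$, the hypothesis $\beta_g\circ\alpha_g^{-1}\in\clInt(R)$ holds, but $\sigma\circ\alpha\circ\sigma^{-1}=\id$ is impossible.

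Concretely, your argument breaks at ``this makes $\Ad W_n$ converge in $\Aut(M)$.'' With $W_{n+1}=w_{n+1}W_n$, the inverse direction is fine: $\|(\Ad W_{n+1})^{-1}(\varphi)-(\Ad W_n)^{-1}(\varphi)\|=\|\Ad w_{n+1}(\varphi)-\varphi\|$, and the small-perturbation estimate controls this. The forward direction, $\|\Ad W_{n+1}(\varphi)-\Ad W_n(\varphi)\|=\|\Ad w_{n+1}(\Ad W_n(\varphi))-\Ad W_n(\varphi)\|$, requires $w_{n+1}$ to almost fix $\Ad W_n(\varphi)$. The small-perturbation estimate gives this only on functionals where $\Ad W_n\circ\alpha\circ\Ad W_n^{\ast}$ was already made close to $\beta$. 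That finite set had to be fixed before $w_n$ was chosen, yet $\Ad W_n(\varphi)$ depends on $w_n$. Relating it back to $\Ad W_{n-1}(\varphi)$ just reproduces the same problem one level down. So you get at most one-directional convergence, and the example shows this cannot be repaired without using $\beta$.

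The paper instead runs a genuinely two-sided Bratteli--Elliott--Evans--Kishimoto argument. Lemma \ref{lem:step1} needs only the first action to be Rohlin. It provides $\theta\in\Int(M)$ with $\int_G\|\theta\circ\alpha_g\circ\theta^{-1}\circ\beta_g^{-1}(\varphi)-\varphi\|\,dg<\varepsilon$ and $\|\theta(\varphi)-\varphi\|<\int_G\|\beta_g\circ\alpha_g^{-1}(\varphi)-\varphi\|\,dg+\varepsilon$. The lemma is applied to the pairs $(\gamma^{(n-1)},\gamma^{(n)})$, where $\gamma^{(-1)}=\alpha$, $\gamma^{(0)}=\beta$ and $\gamma^{(n+1)}=\theta_n\circ\gamma^{(n-1)}\circ\theta_n^{-1}$. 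Thus the Rohlin property of $\alpha$ and of $\beta$ are used alternately. Since $\tilde{\theta}_n=\theta_n\circ\tilde{\theta}_{n-2}$ involves only automorphisms fixed two steps earlier, $\tilde{\theta}_{n-1}(\tilde{\Phi}_{n-1})$ can be put into $\tilde{\Phi}_n$ in advance, which breaks the circularity above. Then $\tilde{\theta}_{2n}\to\sigma_0$ and $\tilde{\theta}_{2n+1}\to\sigma_1$ in $\Aut(M)$, and $\sigma=\sigma_1^{-1}\circ\sigma_0$.

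No cohomology vanishing is needed either. Your Step 2 does not work as stated: $v_{gh}^{\ast}v_g\alpha_g(v_h)$ lies only in $M'\cap M^{\omega}$, so $v$ is not a cocycle and Step 1 does not apply to it. The lemma avoids this by taking a Borel $v\colon G\to\U(M)$ with $\Ad v_g\approx\beta_g\circ\alpha_g^{-1}$ and setting $W(h)=v_h$ in $M\otimes L^{\infty}(G)\subset M_{\alpha}^{\omega}$. It then estimates $\Ad W\circ\alpha_g\circ\Ad W^{\ast}\circ\beta_g^{-1}$ directly via Lemma \ref{lem:AdW}. These estimates are in $L^1(G)$ rather than uniform in $g$, because the tower is equicontinuous only on compact sets of large measure. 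That suffices: the limiting identity holds almost everywhere, hence everywhere by continuity.
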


A cocycle action can be viewed as a $G$-kernel with trivial obstruction class.
For discrete amenable groups, Ocneanu \cite{Oc85} has shown that every cocycle action of such groups on the AFD $\II_1$ factor is perturbed to a genuine action, and Popa \cite{Po21} has extended this result to arbitrary $\II_1$ factor.
In the continuous setting, the problem becomes more subtle.
For locally compact abelian groups, Masuda, Tomatsu \cite{MT16} and Shimada \cite{Sh14} have established that the Rohlin property implies the vanishing of the 2-cohomology.
However, analogous results for non-abelian compact groups have not been known yet.
We prove that strict outerness is sufficient to ensure that a cocycle action on the AFD $\mathrm{II}_1$ factor is perturbed to a genuine action.

\begin{thm}\label{mainD}
    Let $(\alpha,u)$ be a cocycle action of a compact group $G$ on the AFD $\II_1$ factor $R$.
    If $(\alpha,u)$ is strictly outer (i.e., $R'\cap(R\rtimes_{\alpha,u}G)=\mathbb{C}$), then $u$ is a coboundary.
\end{thm}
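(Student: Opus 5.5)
Our plan is to reduce Theorem~\ref{mainD} to a Rohlin-type cohomology vanishing result and then verify the Rohlin property from strict outerness on the AFD $\II_1$ factor.

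\emph{Step 1: tensor with a Rohlin model.} Fix a genuine action $\gamma\colon G\curvearrowright R$ with the Rohlin property, for instance the infinite tensor product of $\Ad\lambda$ on $\bigotimes_{n}\mathbb{B}(L^2(G))$ suitably completed, or a model built from $L^\infty(G)$ with left translation embedded into central sequences. Consider the cocycle action $(\alpha\otimes\gamma,u\otimes1)$ on $R\otimes R\cong R$. We will show:
\begin{enumerate}
\item[(a)] $(\alpha\otimes\gamma,u\otimes1)$ is Rohlin, hence by the $2$-cohomology vanishing for Rohlin cocycle actions (the analogue of \cite{MT16}) $u\otimes1$ is a coboundary;
\item[(b)] $(\alpha,u)$ is cocycle conjugate to $(\alpha\otimes\gamma,u\otimes1)$.
\end{enumerate}
Since being a coboundary is invariant under cocycle conjugacy, (a) and (b) give the theorem.

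\emph{Step 2: vanishing for Rohlin cocycle actions.} Here we use an equicontinuous Rohlin projection family, i.e.\ a $G$-equivariant unital embedding $L^\infty(G)\to R_\omega^{\alpha}$ into the equicontinuous central sequence algebra. With it we average the $2$-cocycle: set
\[
w_g=\int_G u_{h,h^{-1}g}\,e_h\,dh
\]
in an approximate sense. A $1$-cohomology vanishing lemma (Rohlin implies approximately vanishing $1$-cocycles) together with a standard Ocneanu/Masuda--Tomatsu intertwining argument then lifts the ultraproduct solution to an honest $v\colon G\to\U(R)$ with $u_{g,h}=\alpha_g(v_h^*)v_g^*v_{gh}$ (with the paper's conventions). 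Continuity in $g$ is handled via equicontinuity; compactness of $G$ makes the integrals converge.

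\emph{Step 3: absorption (b), the main obstacle.} We must show that strict outerness on $R$ forces $\gamma$ to embed into the central sequence algebra equivariantly, that is, $(\alpha,u)$ is ``$\gamma$-absorbing''. We plan to pass to the dual picture. Strict outerness $R'\cap(R\rtimes_{\alpha,u}G)=\mathbb{C}$ implies that the dual coaction, or the action of $\hat G$ on the crossed product, is centrally free. By Masuda--Tomatsu's classification of minimal compact group actions via their duals \cite{MT07, MT10}, this yields the approximate equivariant embedding of matrix algebras $\mathbb{B}(H_\pi)$ with $\Ad\pi$ into $R_\omega$. Since the problem reduces to finite sets of representations and $R$ is McDuff, we then use a reindexation trick to obtain a full equivariant copy of the model in $R_\omega^{\alpha}$, which gives cocycle conjugacy with $\alpha\otimes\gamma$ by the standard intertwining.

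The delicate point is that the cocycle $u$ is present, so the crossed product is twisted and the dual must be taken in the twisted sense. We will first try to stabilize by $\mathbb{B}(L^2(G))$: the cocycle action $(\alpha\otimes\Ad\lambda,u\otimes1)$ on $R\bar\otimes\mathbb{B}(L^2(G))$ is cocycle conjugate to a genuine action by the usual $2\times2$ trick, because $\Ad\lambda$ is stable. We then need to descend from $R\bar\otimes\mathbb{B}(L^2(G))$ back to the $\II_1$ factor $R$ via the trace scaling argument. We expect this descent, and checking that strict outerness survives the stabilization, to be the hardest part.
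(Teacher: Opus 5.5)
\textbf{There is a genuine gap in Step~3.} That step is where the content of the theorem lies: passing from strict outerness of the \emph{cocycle} action $(\alpha,u)$ on the $\II_1$ factor $R$ to something to which Rohlin-type $2$-cohomology vanishing applies. You sketch two strategies there and complete neither.

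The dual-picture route needs a version of the Masuda--Tomatsu machinery for twisted crossed products. You do not supply it, and you acknowledge this before switching to stabilization. The stabilization route stops exactly at the descent from $R\otimes\mathbb{B}(L^2(G))$ back to $R$, which you yourself flag as the hardest part.

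The target (b) is also more than you need, and it is hard to reach. A cocycle conjugacy $(\alpha,u)\simeq(\alpha\otimes\gamma,u\otimes 1)$ would have to come from an intertwining argument for \emph{cocycle} actions. Such arguments typically need controlled $2$-cohomology vanishing at each stage, which is essentially the statement being proved. Conversely, if you knew $(\alpha,u)$ were Rohlin, Step~2 would apply to $(\alpha,u)$ directly, and the detour through $\alpha\otimes\gamma$ would be superfluous.

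\textbf{The missing idea is to descend the Rohlin property rather than a conjugacy.} The Rohlin property lives in the equicontinuous central sequence algebra, which does not see a type $\I$ tensor factor. This is how the paper argues:
\begin{enumerate}
\item Stabilize by a type $\I_\infty$ factor $Q$. There $u\otimes 1$ is a coboundary, either by Sutherland or directly via the cochain $V_g(h)=u_{g,g^{-1}h}^{\ast}$ in $R\otimes L^\infty(G)$, which satisfies $V_g(\alpha_g\otimes\Ad\lambda_g)(V_h)(u_{g,h}\otimes 1)V_{gh}^{\ast}=1$ exactly.
\item Strict outerness survives stabilization for free. Since $(R\otimes Q)\rtimes_{\alpha\otimes\id,u\otimes 1}G\cong(R\rtimes_{\alpha,u}G)\otimes Q$, the relative commutant is $(R'\cap(R\rtimes_{\alpha,u}G))\otimes\mathbb{C}=\mathbb{C}$. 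Strict outerness is also invariant under unitary perturbation, and $\alpha\otimes\Ad\lambda$ is the perturbation of $\alpha\otimes\id$ by $1\otimes\lambda$.
\item So the perturbed genuine action on the AFD $\II_\infty$ factor is minimal. By Theorem~\ref{thm:uniqmin} it is conjugate to $\id\otimes\gamma$, hence Rohlin.
\item The Rohlin property is perturbation invariant, and $(R\otimes Q)_{\omega,\alpha\otimes\id}=R_{\omega,\alpha}$. Hence the equivariant copy of $L^\infty(G)$ already sits in $R_{\omega,\alpha}$, so $(\alpha,u)$ itself is Rohlin.
\item The Rohlin vanishing result (your Step~2) finishes the proof.
\end{enumerate}

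Your trace-scaling idea can also be completed, but it has to be argued. Take a rank-one projection $e$, set $\beta_g=\Ad(V_g)\circ(\alpha_g\otimes\Ad\lambda_g)$, and use $w_g=(1\otimes e\lambda_g^{\ast})V_g^{\ast}$. Then the reduction of $\beta$ by $1\otimes e$ is exactly $(\alpha,u)$. Minimality makes the fixed-point algebra of $\beta$ a $\II_\infty$ factor, so $1\otimes e$ is equivalent to a $\beta$-invariant projection, whose reduction is a genuine action. Reductions by equivalent projections differ by a unitary perturbation, so $u$ is a coboundary. As written, the proposal contains neither this argument nor the paper's.
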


This paper is organized as follows.
In section 2, we present the proof of Theorem \ref{mainA} and explain string groups to derive Corollary \ref{mainB}.
In section 3, we define the Rohlin property for compact group actions and provide their classification and characterization (Theorem \ref{mainC}).
Finally, section 4 addresses the vanishing of the 1- and 2-cohomology of compact group actions (Theorem \ref{mainD}).

{
\def\addcontentsline#1#2#3{}\relax
\subsection*{Acknowledgement}
This paper is the Master's thesis of the author.
The author is thankful to Narutaka Ozawa and Yusuke Isono for their invaluable comments and support.
The author is also grateful to Yosuke Kubota for his indispensable advice regarding Section \ref{sec:String}, which is largely based on his suggestions, and to Toshihiko Masuda for helpful comments on group actions.
The author utilized Gemini 3.0 Pro to refine the English phrasing.
}

\tableofcontents

\renewcommand{\thethm}{\thesection.\arabic{thm}}
\section{Preliminaries}

\subsection{Notations}

Throughout this paper, all groups are assumed to be second-countable, and all von Neumann algebras are assumed to have separable predual unless it is ultraproduct.

The closed unit ball of a Banach space $E$ is denoted by $(E)_1$.
For a von Neumann algebra $M$, we denote its center, unitary group, projective unitary group, automorphism group, and inner automorphism group by $\Z(M)$, $\U(M)$, $P\U(M)$, $\Aut(M)$, and $\Int(M)$, respectively.
We set $\Out(M)=\Aut(M)/\Int(M)$ and let $\varepsilon_M\colon\Aut(M)\to\Out(M)$ be the canonical quotient map.
The automorphism group $\Aut(M)$ acts isometrically on the predual $M_{\ast}$ canonically:
$\sigma(\varphi)=\varphi\circ\sigma^{-1}$ for $\sigma\in\Aut(M)$ and $\varphi\in M_{\ast}$.
We equip $\Aut(M)$ with the $u$-topology, that is, a net $\alpha_i$ in $\Aut(M)$ converges to $\alpha$ if and only if $\|\alpha_i(\varphi)-\alpha(\varphi)\|\to 0$ for all $\varphi\in M_{\ast}$.
Note that $\Aut(M)$ is a Polish group whenever $M$ has separable predual.
The approximately inner automorphism group $\clInt(M)$ is the closure of $\Int(M)$ in $\Aut(M)$.
We say that a factor $M$ is \emph{full} if $\Int(M)=\clInt(M)$.

For a normal state $\varphi$ on $M$, we consider the two norms on $M$ given by
\[\|x\|_{\varphi}=\varphi(x^{\ast}x)^{1/2},\text{ and }\|x\|_{\varphi}^{\sharp}=\varphi\left(\frac{x^{\ast}x+xx^{\ast}}{2}\right)^{1/2}.\]
If $\tau$ is a distinguished trace on $M$, we write $\|\cdot\|_2$ instead of $\|\cdot\|_{\tau}=\|\cdot\|_{\tau}^{\sharp}$.
We denote the AFD factors of type $\II_1$, $\II_{\infty}$, $\III_{\lambda}\ (0<\lambda<1)$ and $\III_1$ by $R_0$ or $R$, $R_{0,1}$, $R_{\lambda}$ and $R_{\infty}$, respectively.

The left translation action of a group $G$ is denoted by $\mathcal{L}\colon G\curvearrowright L^{\infty}(G)$; $(\mathcal{L}_gf)(h)=f(g^{-1}h)$.
We will write $u\colon G\times G\to H$, $v\colon G\to H$ and $w\in H$ for a 2-cochain, a 1-cochain and an element (0-cochain), respectively.
The evaluation value of a map $\alpha\colon X\to\Aut(M)$ (or $n$-cochain $c$) may be denoted by $\alpha_x$ instead of $\alpha(x)$ (resp.\ $c_{g_1,\ldots,g_n}$).
Given $\alpha\colon X\to\Aut(M)$ and $\sigma\in\Aut(M)$, we write $\sigma\circ\alpha\circ\sigma^{-1}$ for the map $X\ni x\mapsto\sigma\circ\alpha_x\circ\sigma^{-1}\in\Aut(M)$.
More generally, the notation $\pi(\alpha)$ denotes the composition $\pi\circ\alpha$ for a group homomorphism $\pi\colon\Aut(M)\to H$.

Let $\alpha\colon G\curvearrowright M$, $\beta^1\colon H\curvearrowright M$, $\beta^2\colon G\curvearrowright N$ and $\beta^3\colon H\curvearrowright N$ be group actions such that $\alpha$ and $\beta^1$ are commuting.
We define the new actions $\alpha\times\beta^1\colon G\times H\curvearrowright M$, $\alpha\otimes\beta^2\colon G\curvearrowright M\otimes N$, and $\alpha\boxtimes\beta^3\colon G\times H\curvearrowright M\otimes N$ by $(\alpha\times\beta^1)_{g,h}=\alpha_g\circ\beta_h^1$, $(\alpha\otimes\beta^2)_g=\alpha_g\otimes\beta_g^2$ and $(\alpha\boxtimes\beta^3)_{g,h}=\alpha_g\otimes\beta_h^3$.

We will use Moore's measurable group cohomology for locally compact groups.
Our references are \cite{Mo64, Mo76}.
In what follows, measurability refers to the Borel-Haar-measurability, and we always identify functions that differ only on a null set.

\begin{defn}
    Let $G$ be a locally compact group and $A$ a Polish $G$-module via the action $\sigma\colon G\curvearrowright A$.
    The group of \emph{$n$-cochains}, denoted by $\mathrm{C}_{\sigma}^n(G;A)$, consists of all locally bounded measurable functions from $G^n$ to $A$.
    The coboundary operator $\delta^n\colon\mathrm{C}_{\sigma}^n(G;A)\to\mathrm{C}_{\sigma}^{n+1}(G;A)$ is defined by
    \begin{align*}
        (\delta^nc)(g_1,\ldots,g_{n+1}) &= \sigma_{g_1}(c(g_2,\ldots,g_{n+1}))+\sum_{i=1}^n(-1)^ic(g_1,\ldots,g_ig_{i+1},\ldots,g_{n+1})\\
        &\mathrel{\phantom{=}}+(-1)^{n+1}c(g_1,\ldots,g_n).
    \end{align*}
    We define the group of \emph{$n$-cocycles} by $\mathrm{Z}_{\sigma}^n(G;A)=\ker(\delta^n)$, and the group of \emph{$n$-coboundaries} by $\mathrm{B}_{\sigma}^n(G;A)=\im(\delta^{n-1})$.
    The $n$-cohomology group is defined as the quotient $\mathrm{H}_{\sigma}^n(G;A)=\mathrm{Z}_{\sigma}^n(G;A)/\mathrm{B}_{\sigma}^n(G;A)$.
    An $n$-cocycle $c$ is said to be \emph{normalized} if $c(g_1,\ldots,g_n)=0$ whenever $g_i=e$ for some $i$.
    When the action $\sigma$ is trivial, we simply omit the subscript $\sigma$.
\end{defn}

With the notation above, let $I(G)$ denote the $G$-module consisting of all measurable maps from $G$ to $A$, equipped with the action $I(\sigma)\colon G\curvearrowright I(A)$ defined by $[I(\sigma)_gf](h)=\sigma_g(f(g^{-1}h))$.

\begin{lem}[Shapiro lemma]
    For any locally compact group $G$ and any $G$-module $A$, the cohomology group $\mathrm{H}_{I(\sigma)}^n(G;I(A))$ is trivial.
\end{lem}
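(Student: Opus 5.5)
The plan is to prove this Shapiro-type vanishing by writing down an explicit contracting homotopy on the cochain complex $\mathrm{C}_{I(\sigma)}^{\bullet}(G;I(A))$, built from the ``evaluate at the identity'' idea. First I untwist the module. Define $\Phi\colon I(A)\to I(A)$ by $(\Phi f)(h)=\sigma_h^{-1}(f(h))$. Then
\[
(\Phi I(\sigma)_g f)(h)=\sigma_{h}^{-1}\sigma_g f(g^{-1}h)=\sigma_{g^{-1}h}^{-1}f(g^{-1}h)=(\Phi f)(g^{-1}h).
\]
So $\Phi$ is a $G$-equivariant isomorphism from $(I(A),I(\sigma))$ onto $I(A)$ with the plain left translation action $(\lambda_g f)(h)=f(g^{-1}h)$. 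It is a homeomorphism for the topology of convergence in measure, since $(h,a)\mapsto\sigma_h^{-1}(a)$ is jointly continuous. It therefore suffices to treat the case where $\sigma$ is trivial and $G$ acts by translation.

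Next I identify cochains with functions of several variables. A measurable cochain $c\colon G^n\to I(A)$ corresponds, via a Fubini-type argument for Polish-valued measurable maps, to a measurable function $\tilde c(g_1,\dots,g_n;h)=c(g_1,\dots,g_n)(h)$ on $G^{n+1}$, well defined up to null sets. Local boundedness is interpreted as in Moore's framework, where $I(A)$ carries the measure topology. I then define the homotopy $K\colon \mathrm{C}^{n}\to\mathrm{C}^{n-1}$ by
\[
(Kc)(g_1,\dots,g_{n-1})(h)=\tilde c(h,g_1,\dots,g_{n-1};hg_1\cdots g_{n-1}).
\]
This is the standard ``plug $h$ into the first slot'' formula used for coinduced modules. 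The verification, for $n\ge1$, is the identity $\delta K+K\delta=\id$ on $\mathrm{C}^n$, which follows term by term from the definition of $\delta$ together with $(\lambda_{g_1}f)(h)=f(g_1^{-1}h)$. This gives $\mathrm{H}^n=0$ for $n\ge1$. The case $n=0$ needs a separate convention, since $\mathrm{H}^0$ is the space of invariants $I(A)^G$, consisting of constant functions, which need not vanish. I would therefore read the statement as asserting vanishing for $n\ge 1$. If a cocycle $c$ satisfies $c=\delta Kc$, the cochain $Kc$ is the desired primitive.

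The hard step is measure-theoretic. The formula for $Kc$ evaluates $\tilde c$ along the submanifold-like set $\{(h,g_1,\dots;hg_1\cdots)\}$, which is a priori a null set, so $Kc$ is not obviously well defined on equivalence classes. To fix this I would use an averaged version instead of a pointwise one: choose a compactly supported nonnegative function $\phi\in L^1(G)$ with $\int\phi=1$ and set
\[
(Kc)(g_1,\dots,g_{n-1})(h)=\int_G\phi(k)\,\tilde c(hk^{-1},g_1,\dots;hk^{-1}g_1\cdots g_{n-1})\,dk,
\]
where the relevant map $(h,k,g_i)\mapsto(\dots)$ is a measure-class-preserving submersion, so composition with it respects null sets. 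The integral makes sense when $A$ is a vector space; for general Polish $A$ (such as $\mathbb{T}$), I would instead use Moore's comparison theorem, which identifies measurable cohomology with coefficients in $I(A)$ with the derived-functor cohomology. By that theorem, $I(A)$ is relatively injective, so its cohomology vanishes in positive degrees. This is in fact how Moore proves it, and I would cite \cite{Mo76}, which covers exactly this case, for the general coefficient case.
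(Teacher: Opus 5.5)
Your fallback, citing \cite{Mo76} for the general case, is exactly the paper's proof: the paper's proof is the single reference \cite[Theorem 2]{Mo76}. Your degree-$0$ caveat is also correct. The $I(\sigma)$-invariants are the functions $h\mapsto\sigma_h(a)$ with $a\in A$, so $\mathrm{H}^0_{I(\sigma)}(G;I(A))\cong A$, and the lemma has to be read for $n\ge1$ (the paper only applies it there). The untwisting by $\Phi$ is fine too.

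However, your self-contained argument fails at precisely the step you flag as hard: the averaged $K$ does not cure the null-set problem. Write $x=hk^{-1}$. Your integrand is then $\tilde c(x,g_1,\dots,g_{n-1};xg_1\cdots g_{n-1})$. So the map $(h,k,g)\mapsto(hk^{-1},g,hk^{-1}g_1\cdots g_{n-1})$ takes values in $S=\{(x,g,y):y=xg_1\cdots g_{n-1}\}$, which is Haar-null in $G^{n+1}$ for non-discrete $G$. This map is neither a submersion nor measure-class preserving. Modifying $\tilde c$ on $S$ therefore changes your $Kc$ arbitrarily; averaging in $k$ only slides along $S$, since your $Kc$ is just the convolution in $h$ of the ill-defined pointwise expression. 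Separately, the pointwise $K$ also needs a sign: for $n=1$ it gives $\delta Kc=-c$.

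The fix is to decouple the slot you plug into from the evaluation point. After untwisting, pass to homogeneous cochains. The assignment $\bar F(y_0,\dots,y_n)=c(y_0^{-1}y_1,\dots,y_{n-1}^{-1}y_n)(y_0^{-1})$ identifies $\mathrm{C}^{\bullet}(G;I(A))$ with measurable functions on $G^{\bullet+1}$ carrying the plain alternating-sum differential. On that complex, $\bar F\mapsto\int_G\phi(y_0)\bar F(y_0,\cdot)\,dy_0$ is a contraction in degrees $\ge1$. In your notation this reads
\[(Kc)(g_1,\dots,g_{n-1})(h)=\int_G\phi(k)\,\tilde c(k^{-1}h^{-1},g_1,\dots,g_{n-1};k^{-1})\,dk.\]
Here $(k,h,g)\mapsto(k^{-1}h^{-1},g,k^{-1})$ is a homeomorphism of $G^{n+1}$ preserving the Haar measure class, so this $K$ is well defined on classes.

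For non-linear coefficients such as $\mathbb{T}$ you need no integration at all. If $d\bar F=0$ a.e., Fubini gives a.e.\ $y_0$ with $(d\bar F)(y_0,\cdot)=0$ a.e., and for such $y_0$ one gets $\bar F=d\bigl(\bar F(y_0,\cdot)\bigr)$ a.e. This handles general Polish $A$ directly.

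Finally, deriving the vanishing from Moore's comparison theorem plus relative injectivity runs backwards. In Moore's development, the acyclicity of $I(A)$ is the input that makes measurable cohomology effaceable and hence equal to the derived functor. You should cite (or prove) the acyclicity itself, as the paper does.
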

\begin{proof}
    This follows from \cite[Theorem 2]{Mo76}.
\end{proof}

Consider a cocycle $c\in\mathrm{Z}_{\sigma}^n(G;A)$.
Suppose that $c$ takes values in a submodule $B\subset A$ and that $I(B)$ admits an equivariant embedding into $A$.
Then $c$ is a coboundary.
This observation will be used throughout this paper.

\begin{defn}[cocycle action]
    Let $G$ be a locally compact group and $M$ be a von Neumann algebra.
    A pair of measurable maps $\alpha\colon G\to\Aut(M)$ and $u\colon G\times G\to\U(M)$ is said to be a \emph{cocycle action} if it satisfies
    \begin{enumerate}
        \item $\alpha_g\circ\alpha_h=\Ad(u_{g,h})\circ\alpha_{gh}$, and
        \item $\alpha_g(u_{h,k})u_{g,hk}=u_{g,h}u_{gh,k}$
    \end{enumerate}
    for (almost) every $g,h,k\in G$.
    If $u\equiv1$, then $\alpha$ is an \emph{action}.
\end{defn}

We assume in addition that cocycles are normalized:
$u_{g,h}=1$ if $g$ or $h$ is $e$.
The condition (1) means that $\alpha$ induces a group homomorphism $\varepsilon_M\circ\alpha\colon G\to\Out(M)$.
Moreover, the condition (2) leads to the existence of a \emph{covariant representation} as follows:
Let $\pi_{\alpha}\colon M\to M\otimes L^{\infty}(G)\subset M\otimes\mathbb{B}(L^2(G))$ and $\lambda^u\colon G\to\U(M\otimes\mathbb{B}(L^2(G)))$ be maps given by
\[(\pi_{\alpha}(x)\xi)(g)=\alpha_{g^{-1}}(x)\xi(g),\quad(\lambda_h^u\xi)(g)=u_{g^{-1},h}\xi(h^{-1}g)\]
for $g,h\in G$ and $\xi\in \mathcal{H}\otimes L^2(G)$, where $\mathcal{H}$ is the Hilbert space on which $M$ acts.
Then it follows that $\lambda_h^u\pi_{\alpha}(x)=\pi_{\alpha}(\alpha_h(x))\lambda_h^u$ and $\lambda_g^u\lambda_h^u=u_{g,h}\lambda_{gh}^u$.
The \emph{cocycle crossed product} is a von Neumann algebra $\pi_{\alpha}(M)\vee\{\lambda^u\}''$ and denoted by $M\rtimes_{\alpha,u}G$.
We may omit the notation $\pi_{\alpha}$ and identify $M\subset M\rtimes_{\alpha,u}G$.
We say that a cocycle action $(\alpha,u)$ is \emph{strictly outer} if $M'\cap (M\rtimes_{\alpha,u}G)=\Z(M)$.

Let $v\colon G\to\U(M)$ be an arbitrary measurable map.
Then $v$ \emph{perturbs} the cocycle action $(\alpha,u)$ to $(\alpha',u')$, where 
\[\alpha'_g=\Ad(v_g)\circ\alpha_g,\quad u'_{g,h}=v_g\alpha_g(v_h)u_{g,h}v_{gh}^{\ast}.\]
A new cocycle action $(\alpha',u')$ is said to be a \emph{unitary perturbation of $(\alpha,u)$}.
In this case, there exists an isomorphism $M\rtimes_{\alpha,u}G\cong M\rtimes_{\alpha',u'}G$ determined by $x\mapsto x$ for all $x\in M$ and $\lambda_g^u\mapsto v_g^{\ast}\lambda_g^{u'}$.
Indeed, consider the unitary operator $V$ on $\mathcal{H}\otimes L^2(G)$ such that
\[(V\xi)(g)=v_g\xi(g).\]
One can check that $V(\pi_{\alpha}(x))=\pi_{\alpha'}(x)V$ and $V\lambda_g^u=v_g^{\ast}\lambda_g^{u'}V$, and hence $\Ad(V)$ gives the desired isomorphism.
If a cocycle action $(\alpha,u)$ is perturbed to an action $\alpha'=(\alpha',1)$, then $u$ is called a \emph{coboundary}.
Explicitly, this means that there exists $v\colon G\to\U(M)$ such that $u_{g,h}=\alpha_g(v_h^{\ast})v_g^{\ast}v_{gh}$.
If both $(\alpha,u)$ and $(\alpha',u')$ are actions (i.e., $u\equiv u'\equiv 1$), then the map $v$ is called an \emph{$\alpha$-cocycle}.
In other words, an $\alpha$-cocycle is a measurable map $v\colon G\to\U(M)$ satisfying the cocycle identity: $v_g\alpha_g(v_h)=v_{gh}$.
An $\alpha$-cocycle $v$ is called a \emph{coboundary} if there exists a unitary $w\in\U(M)$ such that $v_g=w\alpha_g(w^{\ast})$.
Two actions $\alpha$ and $\alpha'$ are said to be \emph{conjugate} if there exists an automorphism $\sigma\in\Aut(M)$ such that $\sigma\circ\alpha\circ\sigma^{-1}=\alpha'$.
Note that strict outerness of a (cocycle) action is preserved under conjugation and perturbation.

Recall that the \emph{continuous core} of a von Neumann algebra $M$ is the crossed product $\tilde{M}=M\rtimes_{\sigma^{\varphi}}\mathbb{R}$ associated with the modular action of a faithful normal semifinite weight $\varphi$.
We denote by $\lambda_t^{\varphi}\in\tilde{M}$ the unitary implementing $\sigma_t^{\varphi}$.
The continuous core $\tilde{M}$ admits a trace and a trace-scaling flow $\theta=\hat{\sigma^{\varphi}}$.
The \emph{canonical extension} of $\alpha\in\Aut(M)$ is the automorphism $\tilde{\alpha}\in\Aut(\tilde{M})$ defined by
\[\tilde{\alpha}(x)=\alpha(x),\quad \tilde{\alpha}(\lambda_t^{\varphi})=[D\varphi\circ\alpha^{-1}:D\varphi]_t\lambda_t^{\varphi}\]
for $x\in M$ and $t\in\mathbb{R}$.
The \emph{Connes--Takesaki module} $\mod(\alpha)$ is defined by the restriction $\mod(\alpha)=\tilde{\alpha}|_{\Z(\tilde{M})}\in\Aut(\Z(\tilde{M}))$.
The mappings $\alpha\mapsto\tilde{\alpha}$ and $\alpha\mapsto\mod(\alpha)$ are continuous.
See \cite{FT01} for more details.

\begin{defn}[$G$-kernel]
    A \emph{$G$-kernel} on a von Neumann algebra $M$ is a group homomorphism $\kappa\colon G\to\Out(M)$ that admits a measurable lift $\alpha\colon G\to\Aut(M)$.
\end{defn}

We always assume that $\kappa$ is faithful.
Note that if $M$ is full, the existence of a measurable lift $\alpha$ of $\kappa$ is equivalent to the continuity of $\kappa$, because any measurable homomorphism into a Polish group is automatically continuous.
Since $\alpha_g\circ\alpha_h\circ\alpha_{gh}^{-1}$ belongs to $\Int(M)$, there exists a measurable map $u\colon G\times G\to\U(M)$ such that $\alpha_g\circ\alpha_h=\Ad(u_{g,h})\circ\alpha_{gh}$.
The \emph{lifting obstruction} (or simply \emph{obstruction}) of $\kappa$ is the 3-cohomology class $\Ob(\kappa)\in\mathrm{H}^3(G;\mathbb{T})$ given by $\Ob(\kappa)=[c]$,
\[c(g,h,k)u_{g,h}u_{gh,k}=\alpha_g(u_{h,k})u_{g,hk}.\]
The class $[c]$ is trivial if $\kappa$ is \emph{split}, namely, we can choose $\alpha$ as a homomorphism.
The converse also holds if $M$ is properly infinite (\cite{Su80}).

\subsection{Minimal actions of compact groups}

The uniqueness up to conjugacy of minimal actions for compact groups (and more generally, for compact Kac algebras with amenable dual) on AFD factors of type $\II$ has been proved in \cite{MT07, MT10} (see also \cite{Oc85}).
An action $\alpha\colon G\curvearrowright M$ on a factor is called \emph{minimal} if it is faithful and $(M^{\alpha})'\cap M=\mathbb{C}$, and the action $\alpha$ is called \emph{strictly outer} if $M'\cap (M\rtimes_{\alpha}G)=\mathbb{C}$.
For compact group actions, it is known that minimality is equivalent to strict outerness (\cite{Va01}).

\begin{exam}[Wassermann]
    Let $G$ be a compact group and $v\colon G\to P\U(\mathbb{M}_n)$ a faithful projective representation.
    Then the infinite tensor product $\gamma=\bigotimes\Ad{v}\colon G\curvearrowright\bigotimes\mathbb{M}_n\cong R$ is a minimal action.
    All minimal actions on AFD $\II_1$ factor are conjugate to $\gamma$.
\end{exam}

\begin{thm}[\cite{MT07, MT10}]\label{thm:uniqmin}
    Let $M$ be an AFD factor of type $\II$, $G$ a compact group, and $\gamma$ a minimal action of $G$ on $R$.
    Then every minimal action of $G$ on $M$ is conjugate to $\id\otimes\gamma$.
\end{thm}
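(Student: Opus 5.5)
The plan is to reduce Theorem \ref{thm:uniqmin} to a classification problem for ``discrete'' quantum group actions through duality, in the spirit of the approach recalled in the introduction. Let $\alpha\colon G\curvearrowright M$ be minimal. The first step is to pass from $\alpha$ to its dual object. We form the crossed product $M\rtimes_{\alpha}G$ and consider the dual coaction, which is an action of the discrete Kac algebra $\hat{G}$. Equivalently, we can work with the Roberts-type action of the representation category $\mathrm{Rep}(G)$ on the fixed point algebra $N=M^{\alpha}$. Since $G$ is compact, there is a faithful normal conditional expectation $E=\int_G\alpha_g\,dg$ onto $N$. Minimality gives $N'\cap M=\mathbb{C}$, so $N$ is a factor.

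Next we check that $N$ is AFD of type $\II$. Injectivity passes to $N$ through $E$. The type is also inherited: if $M$ is of type $\II_1$, then $N$ is finite; if $M$ is of type $\II_\infty$, then $N$ is semifinite. Moreover, Peter--Weyl together with minimality (each irreducible appears with full multiplicity in the spectral subspaces, by \cite{Va01}) identifies $M\rtimes_{\alpha}G$ with $N\otimes\mathbb{B}(\ell^2)$-type data. The same analysis applies to $\id\otimes\gamma$, with fixed point algebra $M_0\otimes R^{\gamma}$, which is again AFD of the same type.

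The core step is to show that strict outerness of $\alpha$ (equivalent to minimality) makes the dual action of $\hat{G}$ \emph{centrally free} on the AFD factor. The plan is then to classify centrally free actions of the amenable discrete Kac algebra $\hat{G}$ on AFD type $\II$ factors up to cocycle conjugacy, following Ocneanu's scheme, in three stages:
\begin{enumerate}
    \item a Rohlin lemma in the ultraproduct central sequence algebra, using amenability of $\hat{G}$ (i.e.\ F\o lner sets of irreducible representations);
    \item vanishing of 1-cohomology and 2-cohomology in the central sequence algebra, which lets us perturb cocycle actions to genuine ones;
    \item an Evans--Kishimoto intertwining argument producing an (approximately inner) conjugacy.
\end{enumerate}
In the $\II_\infty$ case we must also track the trace-scaling. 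Since the dual action of a compact group action preserves the canonical trace, the module is trivial on both sides and no invariant obstructs conjugacy.

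Finally, we apply Takesaki--Takai duality for compact group actions. The dual actions of $\alpha$ and $\id\otimes\gamma$ are cocycle conjugate. Taking the second crossed product and cutting down by the appropriate projection, which is possible because minimality identifies the relevant corners, yields a conjugacy between $\alpha$ and $\id\otimes\gamma$ themselves.

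I expect the main obstacle to be the Rohlin lemma together with the cohomology vanishing for $\hat{G}$. The group is non-abelian, so $\hat{G}$ is only a Kac algebra. The Rohlin towers therefore have to be built from matrix units indexed by finite sets of irreducibles, and the F\o lner condition must be used with the quantum dimensions. My first attempt would be to adapt Ocneanu's Rohlin lemma using Popa's local quantization in the central sequence algebra.
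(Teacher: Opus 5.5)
Your outline is essentially the Masuda--Tomatsu argument that the paper cites without giving a proof of its own. You pass to the dual action of $\hat{G}$ (on $M\rtimes_{\alpha}G$, or on $M^{\alpha}$), show it is centrally free with trivial module, classify such actions by an Ocneanu-type Rohlin lemma, cohomology vanishing and intertwining, and dualize back.

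The one step worth tightening is the last. Takesaki--Takai duality only gives conjugacy of the amplified actions $\alpha\otimes\Ad\lambda$ and $(\id\otimes\gamma)\otimes\Ad\lambda$, with $\lambda$ the regular representation of $G$, so cutting down must also absorb a possible trace scaling of the corner (harmless here, since $\id\otimes\gamma$ absorbs amplification). It is cleaner to use that a minimal action is itself dual, $M=M^{\alpha}\rtimes_{\beta}\hat{G}$ with $\alpha=\hat{\beta}$ \cite[Corollary 7.4]{MT07}: then cocycle conjugacy of the $\hat{G}$-actions on the fixed-point algebras yields conjugacy of the $G$-actions directly.
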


% From this theorem, we observe that any minimal action of a non-trivial closed subgroup $H<G$ on $R$ can be extended to a minimal action of the whole group $G$.

Recall that the \emph{dual} of a compact group $G$ is the set of all equivalence classes of irreducible unitary representations of $G$, and is denoted by $\hat{G}$.
The \emph{dual action} of a cocycle action $(\alpha,u)\colon G\curvearrowright M$ is the family of *-homomorphisms $(\hat{\alpha}_{\pi}\colon M\rtimes_{\alpha,u}G\to(M\rtimes_{\alpha,u}G)\otimes\mathbb{B}(\mathcal{H}_{\pi}))_{\pi\in\hat{G}}$ determined by
\[\hat{\alpha}_{\pi}(x)=x\otimes 1,\quad \hat{\alpha}_{\pi}(\lambda_g^u)=\lambda_g^u\otimes\pi_g\]
for $x\in M$ and $g\in G$.
For more details, see \cite{MT07, MT10}.

\subsection{Ultraproduct von Neumann algebras}

Standard references are \cite[Chapter 5]{Oc85}, \cite{AH14} and \cite{MT16}.
Let $M$ be a von Neumann algebra.
We fix a free ultrafilter $\omega$ on $\mathbb{N}$.
Let us consider the following \C-subalgebras of $\ell^{\infty}(\mathbb{N},M)$:
\begin{align*}
    \mathscr{T}_{\omega}(M) &= \{(a^{\nu})_{\nu}:\text{$a^{\nu}\to0$ $\ast$-strongly as $\nu\to\omega$}\},\\
    \mathscr{C}_{\omega}(M) &= \{(a^{\nu})_{\nu}:\text{$\|[a^{\nu},\varphi]\|\to0$ as $\nu\to\omega$ for all $\varphi\in M_{\ast}$}\},\\
    \mathscr{N}_{\omega}(M) &= \{(a^{\nu})_{\nu}:(a^{\nu})_{\nu}\text{ normalizes $\mathscr{T}_{\omega}(M)$ in $\ell^{\infty}(\mathbb{N},M)$}\}.
\end{align*}
Then $\mathscr{T}_{\omega}(M)\subset\mathscr{C}_{\omega}(M)\subset\mathscr{N}_{\omega}(M)$.
Note that if $M$ is tracial then $\mathscr{N}_{\omega}(M)=\ell^{\infty}(\mathbb{N},M)$.
We call the quotient \C-algebras $M^{\omega}=\mathscr{N}_{\omega}(M)/\mathscr{T}_{\omega}(M)$ (resp.\ $M_{\omega}=\mathscr{C}_{\omega}(M)/\mathscr{T}_{\omega}(M)$) \emph{ultraproduct algebra} (resp.\ \emph{central sequence algebra}).
They are in fact von Neumann algebras.
We regard $M$ and $M_{\omega}$ as von Neumann subalgebras of $M^{\omega}$.
Under this identification, we have the inclusion $M_{\omega}\subset M'\cap M^{\omega}$.

We define a faithful normal conditional expectation $E$ from $M^{\omega}$ onto $M$ by $E((a^{\nu})_{\nu})=\wlim_{\nu\to\omega}a^{\nu}$.
The restriction $\tau_{\omega}\coloneqq E|_{M_{\omega}}$ yields a faithful normal trace with values in $\Z(M)$.
We often use the notation $\varphi^{\omega}=\varphi\circ E$ for $\varphi\in M_{\ast}$ and the 2-norm on $M_{\omega}$ with respect to the trace $\tau_{\omega}$.
Every $\sigma\in\Aut(M)$ induces $\sigma^{\omega}\in\Aut(M^{\omega})$ given by $\sigma^{\omega}((a^{\nu})_{\nu})=(\sigma(a^{\nu}))_{\nu}$.
We also write $\sigma$ for $\sigma^{\omega}$ when no confusion arises.

\begin{lem}[cf.\ {\cite[Lemma 2.8]{MT16}}]
    Let $Q$ be a separable factor of type $\I$.
    Then canonically $(M\otimes Q)^{\omega}\cong M^{\omega}\otimes Q$ and $(M\otimes Q)_{\omega}\cong M_{\omega}$.
\end{lem}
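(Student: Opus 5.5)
The plan is to fix matrix units $\{e_{ij}\}$ for $Q\cong\mathbb{B}(\ell^2(I))$, with $I$ countable, and build the identification of $(M\otimes Q)^{\omega}$ with $M^{\omega}\otimes Q$ from them. First, $M^{\omega}\otimes Q$ embeds into $(M\otimes Q)^{\omega}$. Indeed, $M\otimes\mathbb{C}e_{11}$ and $(M\otimes Q)^{\omega}$ are related by the corner map
\[
(M\otimes Q)^{\omega}\ni X\longmapsto (1\otimes e_{11})X(1\otimes e_{11}).
\]
The first step is therefore to check that the ultraproduct construction commutes with cutting down by a projection $p$ of $M$ (resp.\ $M\otimes Q$). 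More precisely, we check that $p(M\otimes Q)^{\omega}p\cong (p(M\otimes Q)p)^{\omega}$, using that $p$ is a constant sequence in $\mathscr{N}_{\omega}$ and that the characterization of $\mathscr{T}_{\omega}$ and $\mathscr{N}_{\omega}$ passes to corners. This is essentially the content of \cite{AH14}, and we will invoke it rather than redo it. This gives $(1\otimes e_{11})(M\otimes Q)^{\omega}(1\otimes e_{11})\cong M^{\omega}$.

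Next, the constant sequences $1\otimes e_{ij}$ lie in $(M\otimes Q)^{\omega}$ and form a system of matrix units. When $I$ is infinite, their diagonal sum $\sum_i 1\otimes e_{ii}$ must equal $1$ in $(M\otimes Q)^{\omega}$, and this needs care: strong convergence does not pass naively to the ultraproduct. I would establish it by noting that $\{1\otimes e_{ij}\}\subset M\otimes Q\subset (M\otimes Q)^{\omega}$ and that the inclusion $M\otimes Q\subset(M\otimes Q)^{\omega}$ is a unital normal embedding. Normality holds because $E$ is a faithful normal conditional expectation: if $\sum_{i\le n}1\otimes e_{ii}$ did not increase to $1$ in $(M\otimes Q)^{\omega}$, the defect $q$ would satisfy $E(q)=0$. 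Then the standard fact that a von Neumann algebra with a type $\I$ factor $Q$ as a unital subalgebra decomposes as $p_{11}Np_{11}\otimes Q$ gives
\[
(M\otimes Q)^{\omega}\cong M^{\omega}\otimes Q.
\]
Tracking the constant sequences shows that this isomorphism is the canonical one, extending $M\otimes Q\subset(M\otimes Q)^{\omega}$.

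For the central sequence part, I would use that under this isomorphism, $(M\otimes Q)_{\omega}$ lies in the relative commutant $(M\otimes Q)'\cap (M\otimes Q)^{\omega}$. It therefore commutes with $1\otimes Q$, hence lies in $M^{\omega}\otimes\mathbb{C}$ and in fact in $M'\cap M^{\omega}$. It remains to identify it exactly with $M_{\omega}$. For this I would use the characterization from \cite{MT16}/\cite{AH14}: an element $x\in M^{\omega}$ lies in $M_{\omega}$ iff it commutes with $M$ and also with $\varphi^{\omega}$ for some (equivalently every) faithful normal state $\varphi$, i.e.\ $x\varphi^{\omega}=\varphi^{\omega}x$. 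Taking a faithful normal state $\psi=\varphi\otimes\mathrm{Tr}(h\,\cdot)$ on $M\otimes Q$ with $h$ a diagonal density, we have $\psi^{\omega}=\varphi^{\omega}\otimes\mathrm{Tr}(h\,\cdot)$. For $x\otimes 1$, centralizing $\psi^{\omega}$ is then equivalent to centralizing $\varphi^{\omega}$, and commuting with $M\otimes Q$ is equivalent to commuting with $M$. This yields $(M\otimes Q)_{\omega}= M_{\omega}\otimes 1$.

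I expect the main obstacle to be the infinite case of the first part: proving that the corner and matrix-unit decomposition is compatible with the ultraproduct, specifically the relation $\sum_i 1\otimes e_{ii}=1$ and the corner isomorphism with $M^{\omega}$. There I would lean directly on \cite[Lemma 2.8]{MT16} and \cite{AH14} for the compatibility of $(\cdot)^{\omega}$ with reduction by projections.
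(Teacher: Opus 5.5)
Your proof is correct. The paper gives no argument of its own for this lemma and only points to \cite[Lemma 2.8]{MT16}, so your write-up is a self-contained replacement for that citation. It runs as follows: cut down by $p=1\otimes e_{11}$; show $\sum_i 1\otimes e_{ii}=1$ in $(M\otimes Q)^{\omega}$ via faithfulness of $E$; decompose along the constant matrix units; and identify central sequences through $M_{\omega}=M'\cap(M^{\omega})_{\varphi^{\omega}}$, using $\psi^{\omega}=\varphi^{\omega}\otimes\Tr(h\,\cdot)$. The advantage over the bare citation is that everything reduces to elementary facts and to Lemma \ref{lem:normoffunctional}, which the paper already has.

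Do not lean on \cite[Lemma 2.8]{MT16} for the corner step, since that is the very statement being proved. The step you flag as the main obstacle is in fact elementary. Put $N=M\otimes Q$; the equality $\mathscr{T}_{\omega}(pNp)=\mathscr{T}_{\omega}(N)\cap\ell^{\infty}(\mathbb{N},pNp)$ is clear. Now suppose $(x^{\nu})_{\nu}\subset pNp$ normalizes $\mathscr{T}_{\omega}(pNp)$ and $(a^{\nu})_{\nu}\in\mathscr{T}_{\omega}(N)$. Then $\|a^{\nu\ast}x^{\nu\ast}\xi\|=\|b^{\nu}x^{\nu\ast}\xi\|$ with $b^{\nu}=(pa^{\nu}a^{\nu\ast}p)^{1/2}$. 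Since $b^{\nu}$ is self-adjoint and $\|b^{\nu}\eta\|=\|a^{\nu\ast}p\eta\|\to 0$, it lies in $\mathscr{T}_{\omega}(pNp)$. The other side works the same way with $(pa^{\nu\ast}a^{\nu}p)^{1/2}$. This gives $\mathscr{N}_{\omega}(pNp)=\mathscr{N}_{\omega}(N)\cap\ell^{\infty}(\mathbb{N},pNp)$ and hence $(pNp)^{\omega}=pN^{\omega}p$.

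The remaining ingredients are also one-liners:
\begin{itemize}
\item The centralizer characterization follows from Lemma \ref{lem:normoffunctional}, the identity $(a\varphi b)^{\omega}=a\varphi^{\omega}b$, and norm density of $\mathrm{span}\,M\varphi M$ in $M_{\ast}$.
\item The formula for $\psi^{\omega}$ needs $E_{M\otimes Q}=E_M\otimes\id$, which you get from bimodularity of $E$ on matrix entries: $(1\otimes e_{1i})E(X)(1\otimes e_{j1})=E_M(x_{ij})\otimes e_{11}$.
\item In the step $\sum_i 1\otimes e_{ii}=1$, what you actually use is positivity and faithfulness of $E$, not normality: $q\le 1-p_n$ gives $E(q)\le 1-p_n\to 0$.
\end{itemize}
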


\begin{lem}[cf.\ {\cite[Lemma 4.36]{AH14}}]\label{lem:normoffunctional}
    Let $(x^{\nu})_{\nu},(y^{\nu})_{\nu}\in M^{\omega}$ and $\varphi,\psi\in M_{\ast}$.
    Then
    \[\lim_{\nu\to\omega}\|x^{\nu}\varphi-\psi y^{\nu}\|_{M_{\ast}}=\|(x^{\nu})_{\nu}\varphi^{\omega}-\psi^{\omega}(y^{\nu})_{\nu}\|_{(M^{\omega})_{\ast}}.\]
\end{lem}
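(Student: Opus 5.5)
The plan is to reduce the statement to a computation with the standard representation of the ultraproduct, following the proof of \cite[Lemma 4.36]{AH14}. Write $x=(x^{\nu})_{\nu}$ and $y=(y^{\nu})_{\nu}$. The functional $x\varphi^{\omega}-\psi^{\omega}y$ on $M^{\omega}$ acts by $z\mapsto\varphi^{\omega}(zx)-\psi^{\omega}(yz)$. Each $x^{\nu}\varphi-\psi y^{\nu}$ acts on $M$ by the analogous formula.

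\textbf{The inequality $\ge$.} Take $z=(z^{\nu})_{\nu}\in(M^{\omega})_1$ with representing sequence in $\mathscr{N}_{\omega}(M)$, and choose it with $\sup_\nu\|z^\nu\|\le1$, which is possible by the standard lifting of the unit ball. Since $x,z$ lie in $\mathscr{N}_{\omega}(M)$, the products $zx$ and $yz$ are represented by $(z^{\nu}x^{\nu})_{\nu}$ and $(y^{\nu}z^{\nu})_{\nu}$. By definition of $E$,
\[\varphi^{\omega}(zx)-\psi^{\omega}(yz)=\lim_{\nu\to\omega}\bigl(\varphi(z^{\nu}x^{\nu})-\psi(y^{\nu}z^{\nu})\bigr),\]
and the right-hand side is bounded by $\lim_{\nu\to\omega}\|x^{\nu}\varphi-\psi y^{\nu}\|$. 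Taking the supremum over $z$ gives the inequality.

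\textbf{The inequality $\le$.} For each $\nu$ choose $z^{\nu}\in(M)_1$ with
\[|(x^{\nu}\varphi-\psi y^{\nu})(z^{\nu})|\ge\|x^{\nu}\varphi-\psi y^{\nu}\|-1/\nu.\]
The sequence $(z^{\nu})$ need not lie in $\mathscr{N}_{\omega}(M)$, so we cannot simply evaluate at $(z^\nu)_\nu$. To handle this, use the polar decomposition $x^{\nu}\varphi-\psi y^{\nu}=w^{\nu}|x^{\nu}\varphi-\psi y^{\nu}|$ and take $z^{\nu}=(w^{\nu})^{\ast}$. Then show that $(w^{\nu})$ can be replaced by an element of $\mathscr{N}_{\omega}(M)$ without changing the limit. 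This is done by Ocneanu's / Ando--Haagerup's description of $(M^{\omega})_{\ast}$ via the standard form: the map $M^{\omega}\ni a\mapsto$ its action on $L^2(M)^{\omega}$ identifies $L^2(M^{\omega})$ with the closed subspace $p L^2(M)^{\omega}$ (the Groh--Raynaud ultrapower picture, \cite[Section 3--4]{AH14}). Concretely, write $\varphi=\omega_{\xi,\xi'}$ and $\psi=\omega_{\eta,\eta'}$ with vectors in $L^2(M)$. Then $x^{\nu}\varphi-\psi y^{\nu}$ is given by vectors $x^{\nu}\xi$ etc. Its norm is computed in the Groh--Raynaud ultrapower $\prod^{\omega}\mathbb{B}(L^2(M))$, where arbitrary bounded sequences are allowed. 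The remaining step is to show that the vectors $(x^{\nu}\xi)_\nu$ lie in $L^2(M^{\omega})=pL^2(M)^{\omega}$ precisely because $x\in\mathscr{N}_{\omega}(M)$. Once this holds, the norm of the functional on the big ultrapower equals that on the corner $M^{\omega}$.

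\textbf{Main obstacle.} The hardest step is this final identification: the sup over arbitrary bounded sequences must equal the sup over $M^{\omega}$. I would invoke \cite[Theorem 3.7, Proposition 3.14]{AH14} that $M^{\omega}\cong pM^{\omega}_{\mathrm{GR}}p$ with the support projection $p$ of $\varphi^\omega$-type vectors. With that, the restriction of the functional to the corner has the same norm since it is supported under $p$.
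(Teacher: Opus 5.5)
Your argument is sound in outline, but it takes a genuinely different route from the paper.

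\textbf{The paper's route.} The paper treats only the inequality $\le$ as nontrivial and works at the level of sequences. It takes norming contractions $a^{\nu}$ and splits them by \cite[Corollary 3.16]{AH14} as $a^{\nu}=b^{\nu}+c^{\nu}+d^{\nu}$, with $(b^{\nu})_{\nu}\in\mathscr{N}_{\omega}(M)$ of norm at most $1$ and $c^{\nu}$, $(d^{\nu})^{\ast}$ strongly null. It then kills the error terms against $x^{\nu},y^{\nu}$ using \cite[Corollary 3.11]{AH14}. This is exactly the ``replace $(w^{\nu})$ by an element of $\mathscr{N}_{\omega}(M)$'' step you announce and then abandon.

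\textbf{Your route.} You pass to functionals on the Groh--Raynaud ultrapower $\prod^{\omega}M\subset\mathbb{B}(L^2(M)_{\omega})$. The functional $\Theta=(x^{\nu}\varphi-\psi y^{\nu})_{\omega}$ satisfies $\|\Theta\|=\lim_{\nu\to\omega}\|x^{\nu}\varphi-\psi y^{\nu}\|$; norming sequences give one inequality and Kaplansky density the other. Let $\Phi(z)=p\pi_{\omega}(z)p$ be the isomorphism $M^{\omega}\cong p(\prod^{\omega}M)p$ of \cite[Theorem 3.7]{AH14}. If $\Theta=p\Theta p$, then $\|\Theta\|=\|\Theta\circ\Phi\|$, and $\Theta\circ\Phi=x\varphi^{\omega}-\psi^{\omega}y$.

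\textbf{Comparison.} Your approach gives both inequalities at once, so your separate (correct) proof of $\ge$ becomes redundant. It also shows exactly where $x,y\in\mathscr{N}_{\omega}(M)$ is used. The paper's version avoids handling $\prod^{\omega}M$ and its predual directly, at the price of citing a decomposition lemma that is itself derived from the same corner picture.

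\textbf{Points to fix.} Three points in your write-up need repair.
\begin{enumerate}
\item The norm must be computed on $\prod^{\omega}M$, not on an ultrapower of $\mathbb{B}(L^2(M))$. A vector functional can have strictly larger norm on $\mathbb{B}(L^2(M))$ than on $M$: for $M=L^{\infty}[0,1]$, the functional $\omega_{\chi_{[0,1/2]},\chi_{[1/2,1]}}$ vanishes on $M$ but has norm $1/2$ on $\mathbb{B}(L^2)$.
\item You should not claim $pL^2(M)_{\omega}=L^2(M^{\omega})$. You only need $pL^2(M)_{\omega}$ as a subspace.
\item Most importantly, $\Theta=p\Theta p$ is not a formal consequence of the corner isomorphism. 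It rests on two facts you should state and prove.
\end{enumerate}

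The two facts are as follows.

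(a) $p\xi_{\omega}=\xi_{\omega}$ for every constant vector $\xi\in L^2(M)$. If $p$ is the support of $(\omega_{\xi_0})_{\omega}$ with $\omega_{\xi_0}$ faithful, then $pL^2(M)_{\omega}=[(\prod^{\omega}M)'(\xi_0)_{\omega}]$. This space contains $(a'\xi_0)_{\omega}$ for all $a'\in M'$, and $M'\xi_0$ is dense in $L^2(M)$.

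(b) $[p,\pi_{\omega}(x)]=0$ for $x\in\mathscr{N}_{\omega}(M)$. Multiplicativity of $\Phi$ gives $0=\Phi(xx^{\ast})-\Phi(x)\Phi(x^{\ast})=T^{\ast}T$ with $T=(1-p)\pi_{\omega}(x)^{\ast}p$. Applying the same to $x^{\ast}$ yields the commutation.

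With (a) and (b), the vectors $\pi_{\omega}(x)\xi_{\omega}$, $\xi'_{\omega}$, $\eta_{\omega}$, $\pi_{\omega}(y^{\ast})\eta'_{\omega}$ defining $\Theta$ all lie in $pL^2(M)_{\omega}$. Hence $\Theta(A)=\Theta(pAp)$, and $\Theta(\Phi(z))=\Theta(\pi_{\omega}(z))=\varphi^{\omega}(zx)-\psi^{\omega}(yz)$, which closes your chain of equalities.
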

\begin{proof}
    The inequality $\lim_{\nu\to\omega}\|x^{\nu}\varphi-\psi y^{\nu}\|\ge\|(x^{\nu})_{\nu}\varphi^{\omega}-\psi^{\omega}(y^{\nu})_{\nu}\|$ is clear.
    For the converse, choose $a^{\nu}\in M$ such that $\|a^{\nu}\|=1$ and $\inn{a^{\nu},x^{\nu}\varphi-\psi y^{\nu}}=\|x^{\nu}\varphi-\psi y^{\nu}\|$.
    By \cite[Corollary 3.16]{AH14}, there exist $b^{\nu},c^{\nu}$ and $d^{\nu}\in M$ satisfying
    \begin{itemize}
        \item $a^{\nu}=b^{\nu}+c^{\nu}+d^{\nu}$;
        \item $(b^{\nu})_{\nu}$ belongs to $\mathscr{N}_{\omega}(M)$;
        \item $\|(b^{\nu})_{\nu}\|\le 1$;
        \item $c^{\nu}\to0$, $(d^{\nu})^{\ast}\to 0$ strongly as $\nu\to\omega$.
    \end{itemize}
    According to \cite[Corollary 3.11]{AH14}, the sequences $(x^{\nu}c^{\nu})_{\nu},(c^{\nu}x^{\nu})_{\nu},(y^{\nu}d^{\nu\ast})_{\nu}$ and $(d^{\nu\ast}y^{\nu})_{\nu}$ converge to 0 strongly (and hence weakly) as $\nu\to\omega$.
    Thus, we have
    \begin{align*}
        \lim_{\nu\to\omega}\|x^{\nu}\varphi-\psi y^{\nu}\| &= \lim_{\nu\to\omega}\inn{a^{\nu},x^{\nu}\varphi-\psi y^{\nu}}\\
        &= \lim_{\nu\to\omega}\inn{b^{\nu},x^{\nu}\varphi-\psi y^{\nu}}+\inn{(c^{\nu}+d^{\nu})x^{\nu},\varphi}+\inn{y^{\nu}(c^{\nu}+d^{\nu}),\psi}\\
        &= \lim_{\nu\to\omega}\inn{b^{\nu},x^{\nu}\varphi-\psi y^{\nu}}\\
        &= \inn{(b^{\nu})_{\nu},(x^{\nu})_{\nu}\varphi^{\omega}-\psi^{\omega}(y^{\nu})_{\nu}}.
    \end{align*}
    This value is bounded by $\|(x^{\nu})_{\nu}\varphi^{\omega}-\psi^{\omega}(y^{\nu})_{\nu}\|$.
\end{proof}

Recall that every second countable compact group has an invariant metric.

\begin{defn}[equicontinuous parts, cf.\ {\cite[Definition 3.1, 3.4 and 3.9]{MT16}}]
    Let $X$ be a compact space with a probability measure $\mu$, $M$ a von Neumann algebra with a faithful normal state $\varphi$ and $\omega$ a free ultrafilter.
    \begin{enumerate}
        \item A sequence of maps $(f^{\nu}\colon X\to M)_{\nu\in\mathbb{N}}$ is said to be \emph{$\omega$-equicontinuous} if for each $x\in X$ and $\varepsilon>0$, there exists a neighborhood $U$ of $x\in X$ such that
        \[\{\nu\in\mathbb{N}:\text{$\|f^{\nu}(x)-f^{\nu}(y)\|_{\varphi}^{\sharp}<\varepsilon$ whenever $y\in U$}\}\in\omega.\]
        \item Let $\alpha\colon X\to\Aut(M)$ be a measurable map.
        A sequence $(a^{\nu})_{\nu}\in\ell^{\infty}(\mathbb{N},M)$ is said to be \emph{$(\alpha,\omega)$-equicontinuous} if for each $\varepsilon>0$, there exists a compact subset $K\subset X$ such that
        \begin{itemize}
            \item $\alpha|_K$ is continuous;
            \item $\mu(X\setminus K)<\varepsilon$;
            \item the sequence $(K\ni x\mapsto\alpha_x(a^{\nu})\in M)_{\nu}$ is $\omega$-equicontinuous.
        \end{itemize}
        Let $\mathscr{E}_{\alpha}^{\omega}(M)$ denote the set of all $(\alpha,\omega)$-equicontinuous sequences in $\ell^{\infty}(\mathbb{N},M)$, and then $\mathscr{E}_{\alpha}^{\omega}(M)$ is a \C-subalgebra containing $\mathscr{T}_{\omega}(M)$.
        We write $M_{\alpha}^{\omega}\coloneqq (\mathscr{E}_{\alpha}^{\omega}(M)\cap\mathscr{N}_{\omega}(M))/\mathscr{T}_{\omega}(M)$ and $M_{\omega,\alpha}\coloneqq(\mathscr{E}_{\alpha}^{\omega}(M)\cap\mathscr{C}_{\omega}(M))/\mathscr{T}_{\omega}(M)= M_{\alpha}^{\omega}\cap M_{\omega}$.
    \end{enumerate}
\end{defn}

The above definition does not depend on the choice of $\varphi$.
The \C-subalgebras $M_{\alpha}^{\omega}$ and $M_{\omega,\alpha}$ are von Neumann subalgebras (\cite[Lemma 3.10]{MT16}).
By Lusin's theorem, we have $M\subset M_{\alpha}^{\omega}$.

Let $G$ be a compact group and $(\alpha,u)$ a cocycle action of $G$ on a von Neumann algebra $M$.
Then $(\alpha,u)$ is a cocycle action on $M_{\alpha}^{\omega}$ and $\alpha$ is an action on $M_{\omega,\alpha}$ (\cite[Lemma 3.13]{MT16}).

\begin{lem}[cf.\ {\cite[Lemma 3.3]{MT16}}]\label{lem:limit}
    Let $X$ be a compact space and $f^{\nu}\colon X\to \mathbb{C}$ be an $\omega$-equicontinuous sequence of uniformly bounded measurable functions.
    Then the convergence $\lim_{\nu\to\omega}f^{\nu}(x)$ is uniform on $x\in X$.
    In particular, if $X$ admits a probability measure, then
    \[
    \lim_{\nu\to\omega}\int_Xf^{\nu}(x)\,dx=\int_X\lim_{\nu\to\omega}f^{\nu}(x)\,dx.
    \]
\end{lem}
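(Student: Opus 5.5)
The plan is a standard compactness argument: pass to limit points along $\omega$ and use equicontinuity to make the convergence uniform in $x$.

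Write $L(x)=\lim_{\nu\to\omega}f^{\nu}(x)$. This limit exists pointwise because the $f^{\nu}$ are uniformly bounded, say by $C$, and limits along an ultrafilter of bounded complex sequences always exist. The claim is that for every $\varepsilon>0$,
\[\{\nu:\sup_{x\in X}|f^{\nu}(x)-L(x)|<\varepsilon\}\in\omega.\]

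First, for each $x\in X$, $\omega$-equicontinuity gives an open neighbourhood $U_x$ of $x$ and a set $A_x\in\omega$ such that $|f^{\nu}(x)-f^{\nu}(y)|<\varepsilon/3$ for all $\nu\in A_x$ and $y\in U_x$. (In the scalar case $\|\cdot\|_{\varphi}^{\sharp}$ is just the absolute value.)

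Next, for $y\in U_x$ we get $|L(x)-L(y)|\le\varepsilon/3$. Indeed, $|f^{\nu}(x)-f^{\nu}(y)|<\varepsilon/3$ holds on a set in $\omega$, so passing to the limit along $\omega$ preserves the non-strict inequality.

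By compactness, choose finitely many points $x_1,\dots,x_m$ with $X=\bigcup_i U_{x_i}$. Intersect the finitely many sets $A_{x_i}$ together with the sets $B_i=\{\nu:|f^{\nu}(x_i)-L(x_i)|<\varepsilon/3\}\in\omega$; the result $A$ still lies in $\omega$. For $\nu\in A$ and any $y\in X$, pick $i$ with $y\in U_{x_i}$. Then
\[|f^{\nu}(y)-L(y)|\le|f^{\nu}(y)-f^{\nu}(x_i)|+|f^{\nu}(x_i)-L(x_i)|+|L(x_i)-L(y)|<\varepsilon .\]
This proves uniform convergence along $\omega$.

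For the integral statement, note that $L$ is measurable. It is in fact continuous by the neighbourhood estimate above, and it is also the uniform $\omega$-limit of measurable functions. It is bounded by $C$. Then
\[\Bigl|\int_X f^{\nu}\,dx-\int_X L\,dx\Bigr|\le\sup_x|f^{\nu}(x)-L(x)|,\]
since $\mu$ is a probability measure, and the right-hand side is $<\varepsilon$ for $\omega$-almost all $\nu$. Hence $\lim_{\nu\to\omega}\int_X f^{\nu}=\int_X L$.

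The only delicate point is the quantifier order in the definition of $\omega$-equicontinuity. The good index set $A_x$ depends on $x$, so compactness is needed to reduce to finitely many sets before intersecting inside $\omega$. Once that reduction is made, the remaining steps are routine.
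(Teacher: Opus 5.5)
Your argument is correct. It is the standard $\varepsilon/3$ compactness argument (finite subcover, then intersect finitely many sets in $\omega$) behind \cite[Lemma 3.3]{MT16}, which the paper cites in place of giving its own proof, so your approach is essentially the same. As you observe, the argument also shows that $x\mapsto\lim_{\nu\to\omega}f^{\nu}(x)$ is continuous, which settles its measurability for the integral statement.
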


\begin{lem}[cf.\ {\cite[Lemma 3.21]{MT16}}]\label{lem:Borellift}
    Let $X$ be a compact metric space and $V\colon X\to\U(M^{\omega})$ be a continuous map from $X$ to the unitary group of the ultrapower of some separable von Neumann algebra $M$.
    Then there exists an equicontinuous sequence of Borel maps $v^\nu\colon X\to\U(M)$ such that $(v_x^{\nu})_{\nu}=V_x$ for all $x\in X$.
\end{lem}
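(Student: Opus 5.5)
Write $V_x=(V_x^{\nu})_{\nu}$ with $V_x^{\nu}\in\U(M)$ for an arbitrary representing sequence. The plan is to choose these representatives Borel in $x$, then correct them on a finite partition of $X$ subordinate to a fine net, so that equicontinuity holds $\omega$-eventually. Fix a faithful normal state $\varphi$ and work with the norm $\|\cdot\|_{\varphi^{\omega}}^{\sharp}$ on $M^{\omega}$. Since $X$ is compact and $V$ is continuous, $V$ is uniformly continuous: for each $k$ there is $\delta_k>0$ such that $d(x,y)<\delta_k$ implies
\[
\|V_x-V_y\|_{\varphi^{\omega}}^{\sharp}<1/k .
\]

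First step: a partition. For each $k$ choose a finite Borel partition $X=\bigsqcup_{j}B_j^{k}$ into sets of diameter $<\delta_k$, and pick points $x_j^k\in B_j^k$. Unitaries in $M^{\omega}$ lift to sequences of unitaries in $M$; this is the standard lifting of unitaries in ultrapowers, cf.\ \cite{AH14}. Fix such lifts $(w_{j}^{k,\nu})_{\nu}$ of $V_{x_j^k}$, and set
\[
v^{k,\nu}_x=w_j^{k,\nu}\quad\text{for } x\in B_j^k .
\]
These maps are Borel with finitely many values. For $x,y$ in the same or adjacent pieces, the bound $\|w_j^{k,\nu}-w_{j'}^{k,\nu}\|_{\varphi}^{\sharp}<3/k$ holds for $\nu$ in an $\omega$-large set. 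The two norms agree in the limit by Lemma \ref{lem:normoffunctional} applied to $\varphi^{\omega}$ on $(x^\nu)^*(x^\nu)$. Since each level involves only finitely many pairs, one $\omega$-large set $W_k$ works for all pairs simultaneously.

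Second step: diagonalize in $k$. Choose decreasing sets $W_1\supset W_2\supset\cdots$ in $\omega$ with $W_k$ as above, shrunk to $\{\nu\ge k\}$, and define
\[
v^{\nu}_x=v^{k(\nu),\nu}_x,\qquad k(\nu)=\max\{k:\nu\in W_k\}.
\]
Then $k(\nu)\to\infty$ along $\omega$. The key issue is that $(v_x^{\nu})_{\nu}$ must still represent $V_x$ for every $x$, not just for the net points. I would handle this by also requiring in $W_k$ that
\[
\|v_x^{k,\nu}-V^{\nu}_x\|^{\sharp}_{\varphi}<2/k .
\]
This cannot be demanded uniformly in $x$ directly. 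Instead, use uniform continuity of $V$ in the form $\|V_{x_j}-V_x\|<1/k$ for $x\in B_j^k$, which gives $\lim_{\nu\to\omega}\|w_j^{k,\nu}-V_x^{\nu}\|<1/k$ for each fixed $x$. This is pointwise, and it suffices: for fixed $x$ and any $m$, we have $k(\nu)\ge m$ for $\omega$-almost all $\nu$. The genuine obstacle is that the pieces at level $k(\nu)$ vary with $\nu$, so a single sequence cannot be compared at the level where the comparison was made.

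Resolution: compare $v_x^{k,\nu}$ with $v_x^{m,\nu}$ for $m\le k$. Refine the partitions so that level $k$ refines level $m$, and include in $W_k$ the finitely many conditions
\[
\|w^{k,\nu}_j-w^{m,\nu}_{i}\|^{\sharp}_\varphi<3/m\quad\text{whenever } B_j^k\subset B_i^m,\ m\le k .
\]
This is again a finite set of conditions. Then for $\nu\in W_m$ we get $\|v_x^{\nu}-v_x^{m,\nu}\|<3/m$ for all $x$ uniformly, and consequently $\lim_{\omega}\|v^{\nu}_x-V_x^{\nu}\|\le 4/m$ for every $m$, so $(v_x^\nu)_\nu=V_x$. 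Equicontinuity follows similarly. Given $\varepsilon$, pick $m$ with $10/m<\varepsilon$. For $y$ within $\delta_m$ of $x$, the points lie in the same or adjacent level-$m$ pieces, giving
\[
\|v^{\nu}_x-v^{\nu}_y\|\le 6/m+3/m<\varepsilon
\]
for all $\nu\in W_m\in\omega$. Here the adjacency condition must also be built into $W_m$. I expect the main work to be this bookkeeping of nested partitions and adjacent-piece estimates, together with the norm comparison via Lemma \ref{lem:normoffunctional}.
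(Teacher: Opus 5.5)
Your proposal is correct. Its skeleton is the same as the paper's: finite Borel partitions of shrinking diameter, unitary lifts at sample points, finitely many $\omega$-large conditions per level, and a diagonal level $k(\nu)$ chosen through decreasing sets in $\omega$ with $\min W_k\ge k$. Where you differ is in how the levels are tied together.

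The paper nests the sample points themselves: each level-$k$ sample point is the sample point of the level-$(k+1)$ piece containing it. It fixes one representing sequence per sample point, so at a sample point $x$ the sequence $v^\nu_x$ literally equals the chosen representative for all $\nu$ in some $F_k\in\omega$. It then imposes only intra-level conditions, derives $\|(v^\nu_x)_\nu-(v^\nu_y)_\nu\|^\sharp_{\varphi^\omega}\le\|V_x-V_y\|^\sharp_{\varphi^\omega}$, and extends to all of $X$ by density and continuity.

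You instead choose sample points and lifts independently at each level and nest only the partitions. The price is the cross-level coherence conditions $\|w^{k,\nu}_j-w^{m,\nu}_i\|^\sharp_\varphi<3/m$. What this buys is the uniform bound $\sup_x\|v^\nu_x-v^{m,\nu}_x\|^\sharp_\varphi<3/m$ for $\nu\in W_m$. That bound gives two things at once:
\begin{itemize}
\item $v^\nu_x-V^\nu_x\to0$ $\ast$-strongly along $\omega$ for every $x$, so $(v^\nu_x)_\nu$ differs from a representing sequence of $V_x$ by an element of $\mathscr{T}_\omega(M)$ and automatically lies in $\mathscr{N}_\omega(M)$;
\item uniform $\omega$-equicontinuity in the sense of the definition.
\end{itemize}

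The paper's write-up leaves both of these implicit. Its density argument treats $(v^\nu_x)_\nu$ as an element of $M^\omega$ from the outset, which is automatic only for finite $M$. It also records only continuity of $x\mapsto(v^\nu_x)_\nu$ into $M^\omega$, although equicontinuity does follow from its construction.

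A minor point concerns the identity $\lim_{\nu\to\omega}\|x^\nu\|^\sharp_\varphi=\|(x^\nu)_\nu\|^\sharp_{\varphi^\omega}$ for $(x^\nu)_\nu\in\mathscr{N}_\omega(M)$. It is immediate from $\varphi^\omega=\varphi\circ E$, with $E$ the weak$^*$-limit along $\omega$. Lemma \ref{lem:normoffunctional} concerns norms of functionals and is not the relevant tool there.
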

\begin{proof}
    Fix a faithful normal state $\varphi$ on $M$.
    First we take a sequence of finite Borel partitions $X=\bigsqcup_{i=1}^{N_k}X_i^{(k)}$ and points $x_i^{(k)}\in X_i^{(k)}$ such that
    \begin{enumerate}[label=(\alph*)]
        \item $\diam(X_i^{(k)})<1/k$;
        \item $x_i^{(k)}=x_j^{(k+1)}$ if $x_i^{(k)}\in X_j^{(k+1)}$.
    \end{enumerate}
    Fix a representing sequence $(v_{x_i^{(k)}}^{\nu})_{\nu}$ consisting of unitaries for $V_{x_i^{(k)}}$
    We find $F_k\in\omega$ such that
    \begin{enumerate}[label=(\alph*)]\setcounter{enumi}{2}%
        \item $F_k\supset F_{k+1}$;
        \item $\min{F_k}\ge k$;
        \item $\nu\in F_k$ implies $\|v_{x_i^{(k)}}^{\nu}-v_{x_j^{(k)}}^{\nu}\|_{\varphi}^{\sharp}<\|V_{x_i^{(k)}}-V_{x_j^{(k)}}\|_{\varphi^{\omega}}^{\sharp}+1/k$.
    \end{enumerate}
    For $\nu\in F_k\setminus F_{k+1}$ and $x\in X_i^{(k)}$, we define $v_x^{\nu}\coloneqq v_{x_i^{(k)}}^{\nu}$.
    Then the map $X\ni x\mapsto v_x^{\nu}\in\mathcal{U}(M)$ is Borel.
    It is clear by definition that $(v_x^{\nu})_{\nu}=V_x$ whenever $x=x_i^{(k)}$ for some $k$ and $i$.
    For any general points $x,y\in X$, we can take sequences $(i_k)_k,(j_k)_k$ such that $x\in X_{i_k}^{(k)}$ and $y\in X_{j_k}^{(k)}$ for each $k$.
    We see that $\lim_{k\to\infty} x_{i_k}^{(k)}=x$, $\lim_{k\to\infty}x_{j_k}^{(k)}=y$ in $X$ and
    \begin{align*}
        \|(v_x^{\nu})_{\nu}-(v_y^{\nu})_{\nu}\|_{\varphi^{\omega}}^{\sharp} &= \lim_{\nu\to\omega}\|v_x^{\nu}-v_y^{\nu}\|_{\varphi}^{\sharp}\\
        &\le \liminf_{k\to\infty} (\|V_{x_{i_k}^{(k)}}-V_{x_{j_k}^{(k)}}\|_{\varphi^{\omega}}^{\sharp}+1/k)\\
        &= \|V_x-V_y\|_{\varphi^{\omega}}^{\sharp},
    \end{align*}
    where $\varphi$ is a faithful normal state on $M$.
    Therefore $X\ni x\mapsto (v_x^{\nu})_{\nu}\in\mathcal{U}(M^{\omega})$ is continuous.
    By the density of $(x_i^{(k)})_{k,i}\subset X$, it follows that $(v_x^{\nu})_{\nu}=V_x$ for all $x\in X$.
\end{proof}

Suppose now that $M$ is a factor and let $L^{\infty}(X)\subset M_{\omega}$ be an abelian subalgebra.
We consider the probability measure $\mu=\tau_{\omega}|_{L^{\infty}(X)}$ on $X$.
Then there exists an isomorphism $\Theta\colon M\otimes L^{\infty}(X)\cong M\vee L^{\infty}(X)\subset M^{\omega}$ given by $\Theta(a\otimes f)=af$.
Indeed, $E\circ\Theta(a\otimes f)=E(af)=aE(f)=a\mu(f)$ hence $\Theta$ is isometric with respect to the 2-norm.

\begin{lem}\label{lem:AdW}
    In the above setting, let $W=(w^{\nu})_{\nu}\in\U(M\otimes L^{\infty}(X))\subset\U(M^{\omega})$ be a unitary.
    Then for any $\varphi,\psi\in M_{\ast}$,
    \[\lim_{\nu\to\omega}\|\Ad{w^{\nu}}(\varphi)-\psi\|=\|\Ad{W}(\varphi^{\omega})-\psi^{\omega}\|\le\int_X\|\Ad[W(x)](\varphi)-\psi\|\,dx.\]
\end{lem}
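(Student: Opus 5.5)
The equality follows from Lemma~\ref{lem:normoffunctional}, and the inequality from a direct computation of $\Ad W(\varphi^{\omega})-\psi^{\omega}$ as an integral over $X$.

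\emph{The equality.} By definition, $\Ad{w^{\nu}}(\varphi)=w^{\nu}\varphi w^{\nu\ast}$. Hence
\[\|\Ad{w^{\nu}}(\varphi)-\psi\|=\|w^{\nu}\varphi-\psi w^{\nu}\|,\]
because right multiplication by the unitary $w^{\nu}$ is isometric on $M_{\ast}$. The same identity holds in $(M^{\omega})_{\ast}$ with $W$ in place of $w^{\nu}$. Applying Lemma~\ref{lem:normoffunctional} with $x^{\nu}=y^{\nu}=w^{\nu}$ then gives
\[\lim_{\nu\to\omega}\|w^{\nu}\varphi-\psi w^{\nu}\|=\|W\varphi^{\omega}-\psi^{\omega}W\|=\|\Ad W(\varphi^{\omega})-\psi^{\omega}\|.\]

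\emph{Reduction to $M\vee L^{\infty}(X)$.} Put $N=M\vee L^{\infty}(X)\subset M^{\omega}$ and identify $N$ with $M\otimes L^{\infty}(X)$ via $\Theta$. The key observation is that $\varphi^{\omega}=\varphi\circ E$, and that $E|_N$ corresponds under $\Theta$ to $\id\otimes\mu$, as computed just before the lemma. So the restriction of $\varphi^{\omega}$ to $N$ is $\varphi\otimes\mu$.

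The functional $\Ad W(\varphi^{\omega})-\psi^{\omega}$ is not supported on $N$ a priori. To handle this, let $E_N\colon M^{\omega}\to N$ be the faithful normal conditional expectation obtained as the restriction of $E$ along the product structure. Concretely, $E_N=\Theta\circ(\id\otimes\mu$-preserving expectation$)$. Its existence follows since $E(N)=M\subset N$ and $\varphi^{\omega}\circ E_N=\varphi^{\omega}$ for all $\varphi$; the construction is the standard $\tau_\omega$-preserving expectation onto $L^{\infty}(X)$ relative to $M$. Since $W\in N$ and both $\varphi^{\omega}$ and $\psi^{\omega}$ are $E_N$-invariant, the functional $\Ad W(\varphi^{\omega})-\psi^{\omega}$ is also $E_N$-invariant. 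Therefore its norm equals the norm of its restriction to $N$. This restriction is
\[\Ad W(\varphi\otimes\mu)-\psi\otimes\mu\in(M\otimes L^{\infty}(X))_{\ast}.\]

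\emph{The inequality.} Under $(M\otimes L^{\infty}(X))_{\ast}\cong L^{1}(X,M_{\ast})$, the functional above is the function
\[x\mapsto\Ad[W(x)](\varphi)-\psi,\]
weighted by $\mu$. Its norm is therefore at most $\int_X\|\Ad[W(x)](\varphi)-\psi\|\,dx$, and in fact equal to it.

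\emph{Main obstacle.} The delicate step is justifying the norm reduction to $N$, that is, that $E_N$ exists and that $\varphi^{\omega}$ and $\psi^{\omega}$ are invariant under it. If constructing $E_N$ is troublesome, I would drop the equality with the restricted norm, since only an upper bound is needed. An upper bound for the norm on $M^{\omega}$ follows by pairing with an arbitrary $a$ in the unit ball of $M^{\omega}$, replacing $a$ by $E_N(a)$, and then bounding the resulting pairing by the $L^{1}$ norm. Equivalently, one can approximate $W$ by step functions $\sum_i w_i\otimes 1_{X_i}$. For these, the estimate is immediate by the triangle inequality, since $1_{X_i}\in M_{\omega}$ commutes with $M$:
\[\Bigl\|\sum_i 1_{X_i}\bigl(\Ad w_i(\varphi^{\omega})-\psi^{\omega}\bigr)\Bigr\|\le\sum_i\mu(X_i)\,\|\Ad w_i(\varphi)-\psi\|.\]
The general case then follows by density of step functions in $2$-norm, together with continuity of $\Ad W(\varphi^{\omega})$ in $W$ for the strong topology.
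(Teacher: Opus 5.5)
Your treatment of the equality matches the paper. The paper only says it follows from Lemma~\ref{lem:normoffunctional}; your identity $\|\Ad w^{\nu}(\varphi)-\psi\|=\|w^{\nu}\varphi-\psi w^{\nu}\|$, used with $x^{\nu}=y^{\nu}=w^{\nu}$, is the omitted detail. Your fallback for the inequality is exactly the paper's proof. For a step unitary $W=\sum_i w_i\otimes\chi_{X_i}$ it writes $\Ad W(\varphi^{\omega})-\psi^{\omega}=\sum_i\phi_i^{\omega}\chi_{X_i}$ with $\phi_i=\Ad w_i(\varphi)-\psi$, applies the triangle inequality, and then approximates a general $W$ in SOT.

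Your primary route is genuinely different and also valid once $E_N$ is properly constructed. You restrict to $N=M\vee L^{\infty}(X)$ through an expectation fixing all $\psi^{\omega}$, then use the isometry $(M\otimes L^{\infty}(X))_{\ast}\cong L^1(X;M_{\ast})$. This buys the sharper statement $\|\Ad W(\varphi^{\omega})-\psi^{\omega}\|=\int_X\|\Ad[W(x)](\varphi)-\psi\|\,dx$, and it needs no approximation in $W$. The paper's argument is more elementary: it avoids Takesaki's theorem and vector-valued $L^1$ duality.

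Two points need repair. First, your justification of $E_N$ is circular: it assumes the invariance $\varphi^{\omega}\circ E_N=\varphi^{\omega}$ that it is supposed to produce. Also, the ``concrete'' description you give only defines a map on $N\cong M\otimes L^{\infty}(X)$, not on $M^{\omega}$. A correct argument runs as follows.

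Fix a faithful normal state $\varphi_0$ on $M$. Since $\varphi_0^{\omega}=\varphi_0\circ E$, the modular group $\sigma^{\varphi_0^{\omega}}$ restricts to $\sigma^{\varphi_0}$ on $M$. It fixes $M_{\omega}$ pointwise, because $M_{\omega}$ lies in the centralizer of $\varphi_0^{\omega}$: for $(x^{\nu})_{\nu}\in\mathscr{C}_{\omega}(M)$ one has $\lim_{\nu\to\omega}\|[x^{\nu},\varphi_0]\|=0$. Hence $N$ is $\sigma^{\varphi_0^{\omega}}$-invariant. Takesaki's theorem then gives a $\varphi_0^{\omega}$-preserving normal conditional expectation $E_N\colon M^{\omega}\to N$. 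Now $E\circ E_N$ is a $\varphi_0^{\omega}$-preserving expectation onto $M$, so $E\circ E_N=E$ by uniqueness. Therefore $\psi^{\omega}\circ E_N=\psi^{\omega}$ for every $\psi\in M_{\ast}$, which is what your argument uses.

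Second, in the fallback, ``$1_{X_i}$ commutes with $M$'' is not the right reason. To kill the cross terms $1_{X_i}\varphi^{\omega}1_{X_j}$ for $i\neq j$, you need that $M_{\omega}$ lies in the centralizer of every $\phi^{\omega}$, $\phi\in M_{\ast}$. The same fact is needed for $\|\phi^{\omega}1_{X_i}\|\le\|\phi\|\mu(X_i)$. The paper combines it with the polar decomposition $\phi=u|\phi|$ to get $\|\phi^{\omega}p\|\le\||\phi|^{\omega}p\|=|\phi|^{\omega}(p)=\|\phi\|\tau_{\omega}(p)$ for projections $p\in M_{\omega}$. Finally, the passage to general $W$ needs continuity in $W$ of the right-hand side as well, not only of the left.
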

\begin{proof}
    The first equality follows from Lemma \ref{lem:normoffunctional}.
    First we suppose that $W$ is a simple function, namely, $W$ has the form $\sum_iw_i\otimes\chi_{X_i}$ for some finite partition $X=\bigsqcup_i X_i$ and $w_i\in\U(M)$.
    Let $\phi_i=\Ad{w_i}(\varphi)-\psi$.
    Since the projections $\chi_{X_i}$ are mutually orthogonal and commute with $w_i$ and $\varphi^{\omega}$, it follows that
    \begin{align*}
        \|\Ad{W}(\varphi^{\omega})-\psi^{\omega}\| &= \|\sum_i(\Ad{w_i}(\varphi^{\omega})-\psi^{\omega})\chi_{X_i}\|\\
        &= \|\sum_i\phi_i^{\omega}\chi_{X_i}\|\\
        &\le \sum_i\|\phi_i^{\omega}\chi_{X_i}\|.
    \end{align*}
    Here, for any $\phi\in M_{\ast}$ and a projection $p\in M_{\omega}$, $\|\phi^{\omega}p\|=\|\phi\|\tau_{\omega}(p)$.
    Indeed, let $\phi=u|\phi|$ be a polar decomposition
    Then $\|\phi^{\omega}p\|=\|u|\phi|^{\omega}p\|\le\||\phi|^{\omega}p\|=|\phi|^{\omega}(p)=\|\phi\|\tau_{\omega}(p)$.
    Hence we have
    \[\|\Ad{W}(\varphi^{\omega})-\psi^{\omega}\|\le\sum_i\|\phi_i\|\mu(X_i)=\int_X\|\Ad[W(x)](\varphi)-\psi\|\,dx.\]

    For the general case, any unitary $W\in \U(M\otimes L^{\infty}(X))$ is approximated in SOT by a unitary simple functions.
    Since both sides of the second inequality are continuous for $W$, the conclusion holds.
\end{proof}

\section{\texorpdfstring{$G$}{G}-kernels}

\subsection{Obstruction realization}

Wassermann has shown that every 3-cohomology class of a compact group can be realized as the lifting obstruction of a kernel on the AFD $\II_1$ factor (\cite{Wa06}).
Therefore, the obstruction realization problem has been solved for every McDuff-type factor.

Wassermann's construction is as follows.
Given a 3-cocycle $c\colon G^3\to\mathbb{T}$, consider the function $u\colon G\times G\to \U(L^{\infty}(G))$ defined by $u_{g,h}(g')=c(g',g,h)$, where $\U(L^{\infty}(G))$ is equipped with the right translation $G$-action $\rho$.
Then $u$ satisfies the 2-cocycle identity modulo $\mathbb{T}$, that is,
\[[\rho_g(u_{h,k})u_{g,hk}u_{gh,k}^{-1}u_{g,h}^{-1}](g')=c(g'g,h,k)c(g',g,hk)c(g',gh,k)^{-1}c(g',g,h)^{-1}=c(g,h,k)\]
is a constant function.
Hence $u\otimes u^{\ast}\colon G\times G\to \U(L^{\infty}(G\times G))$ is a 2-cocycle.
Moreover, it is a 2-coboundary.
Indeed, setting $v\colon G\to \U(L^{\infty}(G\times G))$ as $v_g(g_0,g_1)=c(g_0g_1^{-1},g_1,g)$, one can verify that $v_g\rho_g(v_h)v_{gh}^{-1}=u_{g,h}\otimes u_{g,h}^{\ast}$.
Next, consider the inclusions $L^{\infty}(G\times G)\subset L^{\infty}(G)^{\otimes\mathbb{Z}}\subset L^{\infty}(G)^{\otimes\mathbb{Z}}\rtimes\mathbb{Z}$
Let $U$ denote the unitary implementing $1\in\mathbb{Z}$.
There exists a measurable map $\alpha\colon G\to\Aut(L^{\infty}(G)^{\otimes\mathbb{Z}}\rtimes\mathbb{Z})$ defined by
\[\alpha_g(x)=\rho_g(x),\quad \alpha_g(U)=v_gU\]
for $x\in L^{\infty}(G)^{\otimes\mathbb{Z}}$.
It follows that $\alpha_g\circ\alpha_h(U)=\rho_g(v_h)v_gU=v_{gh}(u_{g,h}\otimes u_{g,h}^{\ast})U=v_{g,h}u_{g,h}Uu_{g,h}^{\ast}$ since conjugation by $U$ shifts the tensor indices.
Thus, we have $\alpha_g\circ\alpha_h=\Ad(u_{g,h})\circ\alpha_{gh}$.
The $G$-kernel $\kappa=\varepsilon_M\circ\alpha$ has the obstruction $\Ob(\kappa)=[c]$.

We now generalize the above argument.
Let $n\in\{1,2,\ldots,\infty\}$ and let $\mathbb{F}_n=\inn{s_1,s_2,\ldots,s_n}$ denote the free group of rank $n$.

\begin{lem}\label{lem:freecocycle}
    Let $H$ be a group and $\sigma\colon\mathbb{F}_n\curvearrowright H$ an action of the free group of rank $n$.
    For any $n$-tuple $(g_i)_{i=1}^n\in H^n$, there exists a unique $\sigma$-cocycle $v\colon\mathbb{F}_n\to H$ such that $v_{s_i}=g_i$ for $i\in\{1,2,\ldots,n\}$.
\end{lem}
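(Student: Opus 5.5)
Uniqueness comes first. A $\sigma$-cocycle satisfies $v_{gh}=v_g\sigma_g(v_h)$, and in particular $v_e=v_e\sigma_e(v_e)=v_e^2$, so $v_e=e$. This in turn forces $v_{s_i^{-1}}=\sigma_{s_i^{-1}}(v_{s_i})^{-1}$, because $e=v_{s_i^{-1}s_i}=v_{s_i^{-1}}\sigma_{s_i^{-1}}(v_{s_i})$. Every element of $\mathbb{F}_n$ is a product of generators and their inverses, so induction on word length shows that $v$ is determined by the values $v_{s_i}=g_i$.

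For existence, I would use the semidirect product $H\rtimes_\sigma\mathbb{F}_n$, with multiplication $(a,g)(b,h)=(a\sigma_g(b),gh)$. A map $v\colon\mathbb{F}_n\to H$ is a $\sigma$-cocycle exactly when $g\mapsto(v_g,g)$ is a group homomorphism $\mathbb{F}_n\to H\rtimes_\sigma\mathbb{F}_n$; this amounts to comparing $(v_g,g)(v_h,h)=(v_g\sigma_g(v_h),gh)$ with $(v_{gh},gh)$. By the universal property of the free group, there is a unique homomorphism $\Phi\colon\mathbb{F}_n\to H\rtimes_\sigma\mathbb{F}_n$ with $\Phi(s_i)=(g_i,s_i)$. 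This also covers $n=\infty$, since the universal property holds for any generating set.

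It remains to see that $\Phi$ actually has the form $g\mapsto(v_g,g)$. Composing $\Phi$ with the projection $H\rtimes_\sigma\mathbb{F}_n\to\mathbb{F}_n$ gives a homomorphism that fixes each $s_i$, so it is the identity. Hence $\Phi(g)=(v_g,g)$ for a unique $v_g\in H$, and this $v$ is the desired cocycle. Uniqueness can be read off from the uniqueness of $\Phi$ as well. No step here is a real obstacle; the only point needing care is to keep the convention $v_{gh}=v_g\sigma_g(v_h)$ consistent with the multiplication in the semidirect product.
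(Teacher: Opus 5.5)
Your proposal is correct and follows the paper's proof: the paper likewise takes the unique homomorphism $\mathbb{F}_n\to H\rtimes_\sigma\mathbb{F}_n$ with $s_i\mapsto(g_i,s_i)$, notes that its composition with the projection onto $\mathbb{F}_n$ is the identity in order to extract $v$, and reads off uniqueness from the uniqueness of that homomorphism. Your separate word-length argument for uniqueness is valid but only an optional supplement.
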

\begin{proof}
    Let $\tilde{H}=H\rtimes_{\sigma}\mathbb{F}_n$ be the semidirect product group and $\pi\colon\tilde{H}\to\mathbb{F}_n$ denote the canonical surjection.
    By freeness, we find a unique homomorphism $\tilde{v}\colon\mathbb{F}_n\to\tilde{H}$ such that $\tilde{v}_{s_i}=(g_i,s_i)\in\tilde{H}$.
    Since $\pi\circ\tilde{v}$ is the identity map on $\mathbb{F}_n$, there exists a map $v\colon\mathbb{F}_n\to H$ such that $\tilde{v}_s=(v_s,s)$ for all $s\in\mathbb{F}_n$.
    Then $v$ satisfies the cocycle identity $v_s\sigma_s(v_t)=v_{st}$.

    Uniqueness follows from the fact that any $\sigma$-cocycle $v$ satisfying the condition gives rise to such a homomorphism $\tilde{v}$, which is uniquely determined by its values on the generators.
\end{proof}

Let $\sigma\colon\Gamma\curvearrowright P$ be an action of a discrete group $\Gamma$ on a von Neumann algebra $P$.
Then any unitary $\sigma$-cocycle $v\colon\Gamma\to\U(\Z(P))$ induces the automorphism $\hat{\sigma}_v\in\Aut(P\rtimes_{\sigma}\Gamma)$ such that
\[\hat{\sigma}_v(x)=x,\quad \hat{\sigma}_v(\lambda_s)=v_s\lambda_s\]
for $x\in P$ and $s\in\Gamma$.
This correspondence $\mathrm{Z}_{\sigma}^1(\Gamma;\U(\Z(P)))\ni v\mapsto\hat{\sigma}_v\in\Aut(P\rtimes_{\sigma}\Gamma)$ is a group homomorphism.
Let $\Aut_{\sigma}(P)$ denote the group consisting of all $\alpha\in\Aut(P)$ that commute with $\sigma$.
Any $\alpha\in\Aut_{\sigma}(P)$ canonically extend to an automorphism of $P\rtimes_{\sigma}\Gamma$ (still denoted by $\alpha$) by setting $\alpha(\lambda_s)=\lambda_s$.
Then there exists a relation $\alpha\circ\hat{\sigma}_v=\hat{\sigma}_{\alpha(v)}\circ\alpha$, where $\alpha(v)_s=\alpha(v_s)$.
Thus we obtain
\[\mathrm{Z}_{\sigma}^1(\Gamma;\U(\Z(P)))\rtimes\Aut_{\sigma}(P)\subset\Aut(P\rtimes_{\sigma}\Gamma).\]

\begin{thm}\label{thm:main1}
    Let $G$ be a compact group, $c\colon G^3\to\mathbb{T}$ be a measurable 3-cocycle and $n\in\{1,2,\ldots,\infty\}$.
    Then there exists a $G$-kernel $\kappa\colon G\to\Out(M)$ on a factor $M=L^{\infty}(G)^{\otimes\mathbb{F}_n}\rtimes\mathbb{F}_n$ whose lifting obstruction is $\Ob(\kappa)=[c]$.
\end{thm}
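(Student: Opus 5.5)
We generalize Wassermann's construction by replacing $\mathbb{Z}$ with $\mathbb{F}_n$. Let $P=L^{\infty}(G)^{\otimes\mathbb{F}_n}$, where $L^{\infty}(G)$ is taken with the Haar probability measure. Let $\sigma\colon\mathbb{F}_n\curvearrowright P$ be the Bernoulli shift, and let $\rho\colon G\curvearrowright P$ act diagonally by right translation on every tensor factor. Then $\rho_g\in\Aut_{\sigma}(P)$, and $P=\Z(P)$ is abelian.

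\textbf{Factoriality.} Since $\sigma$ is free and ergodic (a Bernoulli shift of an infinite group with nontrivial base), $M=P\rtimes_{\sigma}\mathbb{F}_n$ is a $\II_1$ factor. The trivial case $G=\{e\}$ can be set aside.

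\textbf{Cocycles on the generators.} For each generator $s_i$, embed $L^{\infty}(G\times G)\subset P$ on the coordinates $e$ and $s_i$. Inside this copy, take Wassermann's $v^{(i)}_g(g_0,g_1)=c(g_0g_1^{-1},g_1,g)$, with the coordinate ordering chosen so that the same computation as in the $\mathbb{Z}$ case goes through. It satisfies
\[
v^{(i)}_g\rho_g(v^{(i)}_h)(v^{(i)}_{gh})^{*}=u_{g,h}\,\sigma_{s_i}(u_{g,h})^{*},
\]
where $u_{g,h}\in\U(P)$ is $c(\cdot,g,h)$ placed at coordinate $e$. (If necessary I will swap $s_i$ and $s_i^{-1}$ to get the orientation right.)

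\textbf{Defining the lift.} For each $g$, Lemma \ref{lem:freecocycle} gives a unique $\sigma$-cocycle $w_g\colon\mathbb{F}_n\to\U(P)$ with $(w_g)_{s_i}=v^{(i)}_g$. Set
\[
\alpha_g=\hat{\sigma}_{w_g}\circ\rho_g\in\mathrm{Z}_{\sigma}^1(\mathbb{F}_n;\U(P))\rtimes\Aut_{\sigma}(P).
\]
Concretely, $\alpha_g(x)=\rho_g(x)$ for $x\in P$ and $\alpha_g(\lambda_{s_i})=v^{(i)}_g\lambda_{s_i}$. Measurability of $g\mapsto\alpha_g$ follows because the generators determine $\alpha_g$ and $v^{(i)}$ is measurable.

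\textbf{The key identity.} The semidirect product law gives $\alpha_g\alpha_h=\hat{\sigma}_{w_g\rho_g(w_h)}\rho_{gh}$. On the other hand,
\[
\Ad(u_{g,h})\circ\alpha_{gh}=\hat{\sigma}_{z}\circ\rho_{gh},\qquad z_s=u_{g,h}\,\sigma_s(u_{g,h})^{*}(w_{gh})_s,
\]
because $\Ad(u)$ acts trivially on the abelian $P$ and sends $\lambda_s$ to $u\sigma_s(u^{*})\lambda_s$. Both $w_g\rho_g(w_h)$ and $z$ are $\sigma$-cocycles, since products of cocycles valued in the abelian group $\U(P)$ are cocycles and $s\mapsto u\sigma_s(u^*)$ is a coboundary. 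They agree on the generators by the displayed identity for $v^{(i)}$. The uniqueness part of Lemma \ref{lem:freecocycle} then gives $z=w_g\rho_g(w_h)$. Hence $\alpha_g\alpha_h=\Ad(u_{g,h})\alpha_{gh}$, so $\kappa=\varepsilon_M\circ\alpha$ is a $G$-kernel.

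\textbf{Computing the obstruction.} We have
\[
\alpha_g(u_{h,k})u_{g,hk}u_{gh,k}^{*}u_{g,h}^{*}=\rho_g(u_{h,k})u_{g,hk}u_{gh,k}^{*}u_{g,h}^{*}=c(g,h,k),
\]
exactly as in Wassermann's computation. Thus $\Ob(\kappa)=[c]$.

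\textbf{Faithfulness.} It remains to check that $\kappa$ is injective, i.e.\ $\alpha_g\notin\Int(M)$ for $g\ne e$. Suppose $\alpha_g=\Ad(w)$. Then $w$ commutes with $P$, so $w\in P'\cap M=P$ because $P$ is a MASA by freeness. Hence $\alpha_g|_P=\id$, contradicting $\rho_g\ne\id$. If faithfulness is not required in the end, this step is only a sanity check.

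\textbf{Main obstacle.} I expect the hardest part to be the bookkeeping in the generator identity: getting the coordinates and the orientation of $\sigma_{s_i}$ versus conjugation by $\lambda_{s_i}$ right so that Wassermann's $v$ produces exactly $u\,\sigma_{s_i}(u)^{*}$. The general $n$ then comes for free from the uniqueness in Lemma \ref{lem:freecocycle}.
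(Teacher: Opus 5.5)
Your construction and verification are the paper's. You use the same $u_{g,h}$ placed at coordinate $e$, the same generator values $v^{(i)}_g((g_s)_s)=c(g_eg_{s_i}^{-1},g_{s_i},g)$, and the same $\alpha_g=\hat{\sigma}_{w_g}\circ\rho_g$. With the shift $\sigma_t((g_s)_s)=(g_{t^{-1}s})_s$ one has $\sigma_{s_i}(u_{g,h})((g_s)_s)=c(g_{s_i},g,h)$. So the natural ordering $g_0=g_e$, $g_1=g_{s_i}$ gives your generator identity directly from the 3-cocycle identity applied to $(g_eg_{s_i}^{-1},g_{s_i},g,h)$, and no swap is needed. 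You pass from generators to all of $\mathbb{F}_n$ via the uniqueness in Lemma~\ref{lem:freecocycle}. This is equivalent to the paper's observation that $\alpha_g\circ\alpha_h$ and $\Ad(u_{g,h})\circ\alpha_{gh}$ agree on $P$ and on each $\lambda_{s_i}$, which together generate $M$.

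The one step that is wrong is the faithfulness check. The paper's proof does not carry this check out, even though $G$-kernels are assumed faithful throughout. From $\alpha_g=\Ad(w)$ you only get $wxw^{\ast}=\rho_g(x)$ for $x\in P$, so $w$ \emph{normalizes} $P$. It commutes with $P$ only if $\rho_g=\id$, which is exactly what you are trying to rule out, so the MASA argument is circular.

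A correct argument goes as follows. Write $w=\sum_s w_s\lambda_s$ with $w_s\in P$. Then $wx=\rho_g(x)w$ gives $w_s\sigma_s(x)=\rho_g(x)w_s$ for all $x\in P$. Hence $w_s$ is supported on the set where the point maps $\sigma_s^{-1}$ and $\rho_g^{-1}$ agree, namely $\{(g_t)_t: g_{st}=g_tg \text{ for all } t\in\mathbb{F}_n\}$.
\begin{itemize}
\item For $s=e$ and $g\neq e$ this set is empty.
\item For $s\neq e$ it is null. Choose infinitely many $t_k$ such that the pairs $\{t_k,st_k\}$ are pairwise disjoint. The events $\{g_{st_k}=g_{t_k}g\}$ are then independent, each of probability at most $1/2$, so their intersection is null.
\end{itemize}
Hence every $w_s=0$, contradicting $w\neq 0$.
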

\begin{proof}
    Define a diagonal right translation action $\rho\colon G\curvearrowright G^{\mathbb{F}_n}$ and an index shift action (Bernoulli action) $\sigma\colon\mathbb{F}_n\curvearrowright G^{\mathbb{F}_n}$ by
    \[\rho_g((g_s)_s)=(g_sg^{-1})_s,\quad\sigma_t((g_s)_s)=(g_{t^{-1}s})_s\]
    for $g\in G$, $t\in\mathbb{F}_n$ and $(g_s)_{s\in\mathbb{F}_n}\in G^{\mathbb{F}_n}$.
    Note that these two actions commute.
    Consider a map $u\colon G\times G\to L^{\infty}(G^{\mathbb{F}_n})=L^{\infty}(G)^{\otimes\mathbb{F}_n}$ given by
    \[u_{g,h}((g_s)_s)=c(g_e,g,h).\]
    We see that $\rho_g(u_{h,k})u_{g,hk}=c(g,h,k)u_{g,h}u_{gh,k}$, where $\rho\colon G\curvearrowright L^{\infty}(G)^{\otimes\mathbb{F}_n}$ is given by $\rho_g(x)=x\circ\rho_g^{-1}$.
    Indeed, the 3-cocycle identity implies
    \begin{align*}
        [\rho_g(u_{h,k})u_{g,hk}]((g_s)_s) &= u_{h,k}((g_sg)_s)u_{g,hk}((g_s)_s)\\
        &= c(g_eg,h,k)c(g_e,g,hk)\\
        &= c(g,h,k)c(g_e,g,h)c(g_e,gh,k)\\
        &= c(g,h,k)[u_{g,h}u_{gh,k}]((g_s)_s).
    \end{align*}
    Thus the proof is finished once we find $\alpha\colon G\to\Aut(M)$ such that $\alpha_g|_{L^{\infty}(G)^{\otimes\mathbb{F}_n}}=\rho_g$ and $\alpha_g\circ\alpha_h=\Ad(u_{g,h})\circ\alpha_{gh}$.

    By Lemma \ref{lem:freecocycle}, for any $g\in G$, there exists a unitary $\sigma$-cocycle $v_g\colon\mathbb{F}_n\to L^{\infty}(G)^{\otimes\mathbb{F}_n}$ such that
    \[v_{g,s_i}((g_s)_s)=c(g_eg_{s_i}^{-1},g_{s_i},g)\]
    holds for all $i\in\{1,2,\ldots,n\}$.
    Then there exists a unique automorphism $\alpha_g=\hat{\sigma}_{v_g}\circ\rho_g$ on $M=L^{\infty}(G)^{\otimes\mathbb{F}_n}\rtimes_{\sigma}\mathbb{F}_n$, which satisfies
    \[\alpha_g(x)=\rho_g(x),\quad\alpha_g(\lambda_s)=v_{g,s}\lambda_s\]
    for all $x\in L^{\infty}(G)^{\otimes\mathbb{F}_n}$ and $s\in\mathbb{F}_n$.

    Now we note that $\rho_g(v_{h,s_i})v_{g,s_i}=u_{g,h}v_{gh,s_i}\sigma_{s_i}(u_{g,h}^{\ast})$ holds for $g,h\in G$ and $i\in\{1,2,\ldots,n\}$.
    This equation follows from the following calculation:
    \begin{align*}
        [\rho_g(v_{h,s_i})v_{g,s_i}]((g_s)_s) &= v_{h,s_i}((g_sg)_s)v_{g,s_i}((g_s)_s)\\
        &= c(g_eg_{s_i}^{-1},g_{s_i}g,h)c(g_eg_{s_i}^{-1},g_{s_i},g)\\
        &= c(g_e,g,h)c(g_eg_{s_i}^{-1},g_{s_i},gh)\overline{c(g_{s_i},g,h)}\\
        &= u_{g,h}((g_s)_s)v_{gh,s_i}((g_s)_s)\overline{u_{g,h}((g_{s_is})_s)}\\
        &= [u_{g,h}v_{gh,s_i}\sigma_{s_i}(u_{g,h}^{\ast})]((g_s)_s).
    \end{align*}
    Hence we have $\alpha_g\circ\alpha_h(\lambda_{s_i})=\rho_g(v_{h,s_i})v_{g,s_i}\lambda_{s_i}=u_{g,h}v_{gh,s_i}\sigma_{s_i}(u_{g,h}^{\ast})\lambda_{s_i}=\Ad(u_{g,h})\circ\alpha_{gh}(\lambda_{s_i})$.
    This implies $\alpha_g\circ\alpha_h=\Ad(u_{g,h})\circ\alpha_{gh}$ on $M$.
\end{proof}

\begin{rem}\label{rem:localsection}
    If a non-empty open subset $U$ of $G$ satisfies the condition that $U\ni k\mapsto c(\cdot,\cdot,k)\in L^{\infty}(G\times G)$ is continuous with respect to SOT, then $U\ni k\mapsto \alpha_k\in\Aut(M)$ is continuous.
    In this case, $\kappa\colon G\to\Out(M)$ has a continuous local section to $\Aut(M)$.
\end{rem}

In the case $n=1$, since the group $\mathbb{F}_1=\mathbb{Z}$ is amenable, the crossed product $L^{\infty}(G)^{\mathbb{Z}}\rtimes\mathbb{Z}$ is an AFD factor of type $\II_1$.
This recovers Wassermann's original theorem.

\begin{thm}[Wassermann]
    Every 3-cocycle of a compact group $G$ can be realized as the lifting obstruction of a $G$-kernel on the AFD $\II_1$ factor.
\end{thm}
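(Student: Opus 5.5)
We apply Theorem \ref{thm:main1} with $n=1$ and then identify the resulting factor. For $n=1$ we have $\mathbb{F}_1=\mathbb{Z}$, and $M=L^{\infty}(G)^{\otimes\mathbb{Z}}\rtimes_{\sigma}\mathbb{Z}$, where $\sigma$ is the Bernoulli shift. Theorem \ref{thm:main1} already gives a $G$-kernel $\kappa$ on this $M$ with $\Ob(\kappa)=[c]$ for the given measurable 3-cocycle $c$. It therefore remains to show that $M$ is isomorphic to the AFD $\II_1$ factor $R$.

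This identification has three parts, taken in order.
\begin{enumerate}
\item $M$ has a finite trace. The infinite product of Haar probability measures $\mu^{\otimes\mathbb{Z}}$ on $G^{\mathbb{Z}}$ is invariant under the shift. Its integral therefore gives a faithful normal tracial state on $L^{\infty}(G)^{\otimes\mathbb{Z}}$ that is $\sigma$-invariant, and this extends to a faithful normal tracial state on the crossed product.
\item $M$ is a factor of type $\II_1$. Here we assume $G$ is nontrivial. The Bernoulli shift of $\mathbb{Z}$ on a nontrivial standard probability space is mixing, hence ergodic. It is also essentially free, because for $k\neq0$ the fixed-point set of $\sigma_k$ is $\mu^{\otimes\mathbb{Z}}$-null: it is determined by finitely many coordinates being periodic, and $\mu$ is not a point mass. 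A free ergodic probability-measure-preserving action gives a crossed product that is a $\II_1$ factor. This uses the standard facts that $L^{\infty}(X)$ is then maximal abelian and that the center of the crossed product is the fixed-point algebra.
\item $M$ is AFD. The group $\mathbb{Z}$ is amenable and $L^{\infty}(G^{\mathbb{Z}})$ is abelian, hence injective, so the crossed product is injective. By Connes' theorem, $M\cong R$.
\end{enumerate}

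The only point needing any care is freeness in the case where $G$ is finite, so that $\mu$ has atoms. Even then the fixed-point set of $\sigma_k$ is the set of $k$-periodic sequences. Since $\mu$ is not a single point mass when $|G|\ge 2$, that set has measure $\lim_N(\sum_x\mu(x)^2)^N=0$, so freeness still holds. If $G$ is trivial, every class in $\mathrm{H}^3$ is trivial, and the trivial kernel on $R$ realizes it.

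The $n=1$ case of Theorem \ref{thm:main1} supplies the kernel directly, and the three parts above show its domain is $R$, which proves the statement.
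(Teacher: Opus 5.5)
Your proposal is correct and follows the paper's argument. The paper also takes the $n=1$ case of Theorem~\ref{thm:main1} and observes that, since $\mathbb{Z}$ is amenable, $L^{\infty}(G)^{\otimes\mathbb{Z}}\rtimes\mathbb{Z}$ is the AFD $\II_1$ factor; your checks of freeness and ergodicity of the Bernoulli shift, and your separate handling of trivial $G$ (where the crossed product is $L(\mathbb{Z})$, not a factor), fill in details the paper leaves implicit.
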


On the other hand, $\mathbb{F}_n$ is not inner-amenable if $n\ge 2$.
Thanks to this, we obtain the desired $G$-kernels on full factors.
We need a fact about Bernoulli crossed products.

\begin{prop}[cf.\ {\cite[Lemma 2.7]{VV15}}]\label{prop:fullness}
    Let $P$ be a non-trivial von Neumann algebra equipped with a faithful normal state.
    Then $P^{\otimes\mathbb{F}_n}\rtimes\mathbb{F}_n$ is a full factor for any $n\in\{2,3,\ldots,\infty\}$.
\end{prop}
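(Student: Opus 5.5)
We need to show that $M=P^{\otimes\mathbb{F}_n}\rtimes\mathbb{F}_n$ is a factor and that every central sequence is trivial, i.e.\ $M_\omega=\mathbb{C}$. For a factor with separable predual, triviality of $M_\omega$ is equivalent (by Connes' criterion, as extended to arbitrary factors by Ando--Haagerup) to $\Int(M)$ being closed. Write $A=P^{\otimes\mathbb{F}_n}$ with the product state $\varphi^{\otimes\mathbb{F}_n}$. Then $\psi=\varphi^{\otimes\mathbb{F}_n}\circ E_A$ is a faithful normal state on $M$, where $E_A$ is the canonical conditional expectation. The Bernoulli action preserves $\varphi^{\otimes\mathbb{F}_n}$, so $\lambda_s$ lies in the centralizer $M_\psi$ for all $s$.

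The plan has two steps.

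\textbf{Step 1: central sequences lie in $A^\omega$.} Take $x=(x^\nu)_\nu\in M_\omega$ and expand $x^\nu=\sum_s a_s^\nu\lambda_s$ in the $L^2(M,\psi)$ sense. Since $\mathbb{F}_n$ is not inner amenable for $n\ge2$, the conjugation representation on $\ell^2(\mathbb{F}_n\setminus\{e\})$ has spectral gap. In concrete terms, there is $C>0$ such that
\[\sum_{s\neq e}|f(s)|^2\le C\sum_{i=1,2}\sum_s|f(s_i s s_i^{-1})-f(s)|^2\]
for all $f\in\ell^2(\mathbb{F}_n)$. Apply this to $f(s)=\|a_s^\nu\|_\psi$. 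The Bernoulli shift $\sigma_{s_i}$ preserves the state, so
\[\|\lambda_{s_i}x^\nu\lambda_{s_i}^*-x^\nu\|_\psi^2=\sum_s\|\sigma_{s_i}(a_{s_i^{-1}ss_i}^\nu)-a_s^\nu\|^2\ge\sum_s\bigl|\|a^\nu_{s_i^{-1}ss_i}\|-\|a^\nu_s\|\bigr|^2,\]
where the inequality is the reverse triangle inequality. The left side tends to $0$ because $x$ is central. Hence $\sum_{s\ne e}\|a_s^\nu\|_\psi^2\to0$, so $x=E_A(x)$ lies in $A^\omega$.

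\textbf{Step 2: central sequences in $A^\omega$ are scalar.} Now take $a=(a^\nu)\in A^\omega$ asymptotically commuting with every $\lambda_s$, i.e.\ $\|\sigma_s(a^\nu)-a^\nu\|_\psi\to0$. We may assume $\varphi^{\otimes\mathbb{F}_n}(a^\nu)=0$. Fix $\varepsilon>0$ and choose a finite set $F\subset\mathbb{F}_n$ so that $a^\nu$ is within $\varepsilon$ in $\|\cdot\|_\psi$ of the subalgebra $P^{\otimes(\mathbb{F}_n\setminus F)}$; this should hold uniformly along $\omega$ because $a$ commutes asymptotically with $P^{\otimes F}\subset M$. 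The intended input here is a spectral-gap (mixing) estimate for the Bernoulli action relative to finite tensor blocks: $a$ commutes with $P^{\otimes F}$, and the relevant commutant is controlled via $P'\cap P^\omega$-type arguments. Then pick $t$ with $tF\cap F=\emptyset$; the translates $\sigma_t(a^\nu)$ and $a^\nu$ are then approximately supported on disjoint tensor sites. Combining this with $\sigma_t(a)\approx a$ should force $\|a^\nu\|_\psi\lesssim\varepsilon$.

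I expect Step 2 to be the main obstacle, and I do not yet have an argument for it. The localization claim is the weakest point: $P$ is arbitrary, possibly a non-full factor or of type $\III$, so asymptotic commutation with $P^{\otimes F}$ does not obviously confine $a^\nu$ away from $F$. The disjointness argument is also unfinished, since a centered element supported on $\mathbb{F}_n\setminus F$ need not be orthogonal to its translate supported on $\mathbb{F}_n\setminus tF$, because those two sets overlap. So at present Step 2 is a heuristic sketch rather than a proof. For this step I would fall back on the Popa-style spectral-gap rigidity for Bernoulli crossed products over non-inner-amenable groups carried out in \cite[Lemma 2.7]{VV15}, treating the Bernoulli action as mixing and using the non-inner-amenability of $\mathbb{F}_n$ a second time.

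Factoriality follows separately from freeness and ergodicity of the Bernoulli action: $P$ is non-trivial, so the shift is properly outer and ergodic. This gives $\Z(M)\subset M'\cap M=\mathbb{C}$.
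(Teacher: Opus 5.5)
\textbf{There is a genuine gap in Step 2.}

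Your Step 1 is correct. Since $\lambda_s\in M_\psi$ one has $\|x\|_\psi^2=\sum_s\|a_s\|_\psi^2$. The conjugation spectral gap of $\mathbb{F}_n$ (which also holds for $n=\infty$ with the two generators $s_1,s_2$) then gives $\|x^\nu-E_A(x^\nu)\|_\psi\to0$. Running the same argument for $(x^\nu)^\ast$ upgrades this to $\ast$-strong convergence, so $x$ is represented by $y^\nu:=E_A(x^\nu)$.

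Step 2, as you acknowledge, is unproven, and the heuristic you sketch cannot be made to work. The localization claim carries no information in the case the paper actually needs, $P=L^\infty[0,1]$, where every element of $A$ commutes with $P^{\otimes F}$. For non-full $P$ it is false: a central sequence of the copy of $P$ at a site of $F$ commutes asymptotically with $P^{\otimes F}$ but is far from $P^{\otimes(\mathbb{F}_n\setminus F)}$. More fundamentally, a mixing/disjoint-translates argument never uses non-amenability, and Step 2 must use it. The Bernoulli shift of $\mathbb{Z}$ is mixing but not strongly ergodic, so $L^\infty[0,1]^{\otimes\mathbb{Z}}\rtimes\mathbb{Z}\cong R$ has non-trivial central sequences lying in $A^\omega$. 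Strong ergodicity of the shift is exactly what Step 2 asserts.

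\textbf{The missing ingredient} is the spectral gap of the Koopman representation of the Bernoulli shift, which is where non-amenability enters. Let $\xi$ be the GNS vector of $\varphi^{\otimes\mathbb{F}_n}$ and $\xi_\varphi$ that of $\varphi$. Then
\[L^2(A)\ominus\mathbb{C}\xi=\bigoplus_{\emptyset\neq F\Subset\mathbb{F}_n}\bigotimes_{g\in F}(L^2(P)\ominus\mathbb{C}\xi_\varphi),\]
and $\mathbb{F}_n$ permutes the finite sets $F$ with finite stabilizers. These stabilizers are trivial because $\mathbb{F}_n$ is torsion-free, so this representation is a multiple of the left regular representation. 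Non-amenability of $\langle s_1,s_2\rangle\cong\mathbb{F}_2$ (Kesten) then yields $C>0$ with $\|\eta\|\le C\sum_{i=1,2}\|U_{s_i}\eta-\eta\|$ for all $\eta\perp\xi$.

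Apply this to $\eta=(y^\nu-\psi(y^\nu))\xi$, using
\[\|\sigma_{s_i}(y^\nu)-y^\nu\|_\psi=\|E_A(\lambda_{s_i}x^\nu\lambda_{s_i}^\ast-x^\nu)\|_\psi\le\|\lambda_{s_i}x^\nu\lambda_{s_i}^\ast-x^\nu\|_\psi\to0,\]
and likewise to the adjoints. This gives $y^\nu-\psi(y^\nu)\to0$ $\ast$-strongly, i.e.\ $M_\omega=\mathbb{C}$. It also yields factoriality directly, since $\Z(M)\subset M_\omega$; your closing sentence ``$\Z(M)\subset M'\cap M=\mathbb{C}$'' is circular as written.

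With this insertion, your two-step argument (non-inner-amenability to push central sequences into $A^\omega$, then strong ergodicity of the shift) is a complete, self-contained proof. The paper gives no argument and simply cites \cite[Lemma 2.7]{VV15}. As it stands, your proposal falls back on that same citation for the essential step.
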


\begin{thm}\label{thm:realization}
    There exists a full $\II_1$ factor $M$ such that every 3-cohomology class of a compact group $G$ can be realized as the obstruction of a $G$-kernel on $M$.
\end{thm}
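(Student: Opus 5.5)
The plan is to rerun the construction of Theorem \ref{thm:main1} with $n=2$, after replacing $L^{\infty}(G)$ by a base algebra whose isomorphism class does not depend on $G$. The algebra $M=L^{\infty}(G)^{\otimes\mathbb{F}_2}\rtimes\mathbb{F}_2$ produced there depends on $G$ in general; for instance, when $G$ is finite, $L^{\infty}(G)$ is atomic. To remove this dependence, set $X_G=G\times[0,1]$ with the product of the Haar probability measure and Lebesgue measure. Since $G$ is second-countable and compact, $X_G$ is a standard probability space, and it is non-atomic because of the $[0,1]$ factor. Hence there is a measure-preserving isomorphism $X_G\cong[0,1]$, which induces a trace-preserving isomorphism $L^{\infty}(X_G)\cong L^{\infty}[0,1]$. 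I take
\[M=L^{\infty}[0,1]^{\otimes\mathbb{F}_2}\rtimes\mathbb{F}_2\]
with the Bernoulli shift. This $M$ is a full factor by Proposition \ref{prop:fullness}. It is of type $\II_1$: the Bernoulli action of the infinite group $\mathbb{F}_2$ on a non-trivial diffuse base is free, ergodic and trace-preserving, so the crossed product is a $\II_1$ factor.

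Next I fix $G$ and a measurable 3-cocycle $c$. Using the isomorphism above, it suffices to build the kernel on $L^{\infty}(X_G)^{\otimes\mathbb{F}_2}\rtimes\mathbb{F}_2$. This is because an isomorphism $L^{\infty}(X_G)\cong L^{\infty}[0,1]$ preserving the state induces an $\mathbb{F}_2$-equivariant isomorphism of the infinite tensor products, and hence an isomorphism of the crossed products. Such an isomorphism transports $G$-kernels and preserves their obstruction classes. On $X_G^{\mathbb{F}_2}$ I let $G$ act by diagonal right translation on the $G$-coordinates only, leaving the $[0,1]$-coordinates fixed; this action commutes with the Bernoulli shift $\sigma$. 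I then define everything exactly as in the proof of Theorem \ref{thm:main1}, with all functions depending only on the $G$-coordinates:
\[u_{g,h}((g_s,t_s)_s)=c(g_e,g,h),\qquad v_{g,s_i}((g_s,t_s)_s)=c(g_eg_{s_i}^{-1},g_{s_i},g).\]
Lemma \ref{lem:freecocycle} extends $v_g$ to a $\sigma$-cocycle, and I set $\alpha_g=\hat{\sigma}_{v_g}\circ\rho_g$.

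The computations showing that $\rho_g(u_{h,k})u_{g,hk}=c(g,h,k)u_{g,h}u_{gh,k}$ and $\alpha_g\circ\alpha_h=\Ad(u_{g,h})\circ\alpha_{gh}$ are verbatim those of Theorem \ref{thm:main1}, because the extra $[0,1]$-coordinates are inert. This gives a $G$-kernel $\kappa=\varepsilon_M\circ\alpha$ with $\Ob(\kappa)=[c]$.

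Two points need checking. First, $\alpha\colon G\to\Aut(M)$ must be measurable, which is required for $\kappa$ to be a $G$-kernel. I would verify this on the generating set: $g\mapsto\rho_g$ is continuous, and $g\mapsto v_{g,s}$ is measurable into $\U(L^{\infty})$ for each fixed word $s$ by measurability of $c$. Second, $\kappa$ must be faithful. Faithfulness is part of the paper's standing convention rather than a defining requirement, and I expect it to be the main subtle point. I would argue it via fullness, which makes $\kappa$ continuous. If $\alpha_g=\Ad w$ for some unitary $w$, then $w$ implements $\rho_g$ on $L^{\infty}(X_G)^{\otimes\mathbb{F}_2}$. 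Since this subalgebra is a Cartan (in particular maximal abelian) subalgebra of the Bernoulli crossed product, $w$ normalizes it, so $\rho_g$ agrees with $\Ad$ of a normalizing unitary. Such an automorphism is induced by an element of the full group of the shift. But $\rho_g$ for $g\neq e$ moves every coordinate $g_s\mapsto g_sg^{-1}$, and no finite piecewise combination of shifts can do this almost everywhere. Hence $g=e$.
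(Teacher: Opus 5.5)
Your proposal is correct and follows the paper's proof. Both take $M=L^{\infty}[0,1]^{\otimes\mathbb{F}_2}\rtimes\mathbb{F}_2$, a full $\II_1$ factor by Proposition \ref{prop:fullness}, and transport the kernel of Theorem \ref{thm:main1} (with $n=2$) through a state-preserving identification of the base algebra with $L^{\infty}[0,1]$. The only difference is that you always use $G\times[0,1]$, whereas the paper uses $L^{\infty}(G)$ directly (it is diffuse when $G$ is infinite) and adds the $[0,1]$ factor only when $G$ is finite. Your extra checks of measurability and faithfulness, which the paper leaves implicit, are sound. One small correction on faithfulness: normalizers of the Cartan subalgebra give countable, not finite, piecewise shifts, but your observation that $\rho_g(x)\notin\mathbb{F}_2\cdot x$ for almost every $x$ when $g\neq e$ already covers that case.
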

\begin{proof}
    Let $P=L^{\infty}[0,1]$ endowed with the Lebesgue measure.
    Recall that every separable diffuse abelian von Neumann algebra is isomorphic to $P$.
    Define $M=P^{\otimes\mathbb{F}_2}\rtimes\mathbb{F}_2$, and then $M$ is a full factor of type $\II_1$ by Proposition \ref{prop:fullness} since $\mathbb{F}_2$ is not inner-amenable.

    Let $c\colon G^3\to\mathbb{T}$ be a measurable 3-cocycle of a compact group $G$.
    Theorem \ref{thm:main1} implies that there exists a $G$-kernel $\kappa$ on $M\cong L^{\infty}(G)^{\otimes\mathbb{F}_2}\rtimes\mathbb{F}_2$ such that $\Ob(\kappa)=[c]$.
    (When $G$ is finite, we replace $L^{\infty}(G)$ with $L^{\infty}(G)\otimes L^{\infty}[0,1]$ to ensure the isomorphism with $P$, and the same argument holds.)
\end{proof}

\subsection{String groups}\label{sec:String}

Throughout this subsection, we use the notation $\mathrm{H}_{\mathrm{grp}}^n(G;A)$, or more precisely, $\mathrm{H}_{\mathrm{grp,m}}^n(G;A)$ to denote the measurable group cohomology, distinguishing it from other cohomology theories.

Let $G$ be a compact, simple, and simply-connected Lie group.
It is known that its homotopy groups are
\[\pi_n(G)\cong\begin{cases}
    0&(n=0,1,2),\\
    \mathbb{Z}&(n=3).
\end{cases}\]
The 3-connected cover of $G$ is called the \emph{string group} of $G$ and denoted by $\String(G)$.
In particular, the 3-connected cover of $\Spin(n)$ for a large $n$ is denoted by $\String(n)$.
Several concrete constructions of $\String(G)$ are known (see \cite{St96, NSW13, KLW23}).
We will provide a new model for $\String(G)$ admitting a tractable topological group structure.

For any $\II_1$ factor $M$, the unitary group $\U(M)$ is contractible (\cite{Je25}) and hence $P\U(M)=\U(M)/\mathbb{T}$, equipped with SOT, models the Eilenberg--MacLane space $K(\mathbb{Z},2)$.
Our goal is to construct a fibration $P\U(M)\longinto\tilde{G}\longonto G$ and also to give an answer to \cite[Problem 3.10]{NSW13}, where $M$ is the full $\II_1$ factor from Theorem \ref{thm:realization}.

We know that $\mathrm{H^3_{grp}}(G;\mathbb{T})\cong\mathrm{H}^4(BG;\mathbb{Z})=[BG,K(\mathbb{Z},4)]\cong\Hom(\pi_3(G),\mathbb{Z})\cong\mathbb{Z}$.
Take a generator $[c]\in\mathrm{H_{grp}^3}(G;\mathbb{T})$ and let $\kappa\colon G\hookrightarrow\Out(M)$ be a $G$-kernel such that $\Ob(\kappa)=[c]$.
Then $\tilde{G}\coloneq\varepsilon_M^{-1}(\kappa(G))\subset\Aut(M)$ is a closed subgroup, and a projection $\varepsilon_M|_{\tilde{G}}\colon\tilde{G}\to G$ gives a principal $P\U(M)$-bundle over $G$ if the 3-cocycle $c$ satisfies the condition in Remark \ref{rem:localsection}.
We will show that this principal bundle corresponds to the generator of $\mathrm{H}^3(G;\mathbb{Z})\cong\mathbb{Z}$.
To this end, we introduce (semi-)simplicial spaces.
Our standard reference is \cite{Tu06}.

\begin{defn}
    A \emph{simplicial space} (resp.\ \emph{semi-simplicial space}) is a sequence $X_{\bullet}=(X_n)_{n\in\mathbb{N}}$ of topological spaces with a family of continuous maps $\tilde{f}\colon X_n\to X_m$ for every non-decreasing (resp.\ increasing) maps $f\colon[m]\to[n]$ satisfying the relation $\tilde{g}\circ\tilde{f}=\tilde{f\circ g}$, where $[n]$ denotes $\{0,\ldots,n\}$.
    When each $X_n$ is discrete, it is called a \emph{simplicial set}.
\end{defn}

$\Hom_{\Delta}(m,n)$ (resp.\ $\Hom_{\Delta'}(m,n)$) denotes the set of all non-decreasing (resp.\ increasing) maps from $[m]$ to $[n]$.
Let $\varepsilon_i\in\Hom_{\Delta'}(n-1,n)$ be the unique increasing map that avoids $i$, and $\eta_i^n\in\Hom_{\Delta}(n+1,n)$ be the unique non-decreasing surjective map that reaches $i$ twice.
In other words, a simplicial space is a family of continuous maps
\[\mathrm{d}_i^n\coloneqq\tilde{\varepsilon}_i^n\colon X_n\to X_{n-1},\quad\mathrm{s}_i^n\coloneqq\tilde{\eta}_i^n\colon X_n\to X_{n+1}\]
satisfying the following simplicial identities:
\[\begin{cases}
    \mathrm{d}_i^{n-1} \mathrm{d}_j^n = \mathrm{d}_{j-1}^{n-1} \mathrm{d}_i^n & \text{if } i<j,\\
    \mathrm{d}_i^{n+1} \mathrm{s}_j^n = \mathrm{s}_{j-1}^{n-1} \mathrm{d}_i^n & \text{if } i<j,\\
    \mathrm{d}_i^{n+1} \mathrm{s}_j^n = \id_{X_n} & \text{if } i=j,\, j+1,\\
    \mathrm{d}_i^{n+1} \mathrm{s}_j^n = \mathrm{s}_j^{n-1} \mathrm{d}_{i-1}^n & \text{if } i>j+1,\\
    \mathrm{s}_i^{n+1} \mathrm{s}_j^n = \mathrm{s}_{j+1}^{n+1} \mathrm{s}_i^n & \text{if } i\le j.
\end{cases}\]

\begin{defn}[cf.\ {\cite[Section 4]{Tu06}}]
    Let $X_{\bullet}$ be a simplicial space.
    A sequence of open cover $\mathfrak{U}_n=\{U_i^{(n)}\}_{\alpha \in I_n}$ of $X_n$ is said to be a \emph{semi-simplicial open cover} if $I_{\bullet}=(I_n)_n$ is a semi-simplicial set (namely, $\mathrm{d}_i^n\colon I_n\to I_{n-1}$ satisfies $\mathrm{d}_i^n\mathrm{d}_j^n=\mathrm{d}_{j-1}^n\mathrm{d}_i^n$ if $i<j$) and $\mathrm{d}_i^n(U_j^{(n)}) \subset U_{\mathrm{d}_i^n(j)}$.
\end{defn}

For a given cover $\mathfrak{U}_n$, we can form the semi-simplicial open cover $\sigma\mathfrak{U}_n=(\tilde{U}_{\lambda}^{(n)})_{\lambda\in\Lambda_n(I)}$.
First, we define the semi-simplicial set $\Lambda_n(I)$ by
\[\Lambda_n(I)=\prod_{m=0}^n\Map(\Hom_{\Delta'}(m,n),I_m),\quad[\tilde{g}(\lambda)](f)=\lambda(g\circ f)\]
for $g\in\Hom_{\Delta'}(n,n')$, $\lambda\in\Lambda_{n'}(I)$ and $f\in\Hom_{\Delta'}(m,n)$.
Next, for all $\lambda\in\Lambda_n(I)$, we let
\[\tilde{U}_{\lambda}^{(n)}=\bigcap_{m\le n}\bigcap_{f\in\Hom_{\Delta'}(m,n)}\tilde{f}^{-1}(U_{\lambda(f)}^{(m)}).\]
Then the open cover $(\tilde{U}_{\lambda}^{(n)})_{\lambda\in\Lambda_n(I)}$ is in fact a semi-simplicial open cover (cf.\ \cite[Section 4.1]{Tu06}).

\begin{defn}
    Let $X_{\bullet}$ be a semi-simplicial space, $\mathfrak{U}_{\bullet}$ a semi-simplicial open cover of $X_{\bullet}$, and $A^{\bullet}$ an abelian semi-simplicial sheaf.
    \begin{enumerate}
        \item Take a resolution
        \[\begin{tikzcd}
            0\arrow[r] &A^{\bullet}\arrow[r] &\mathcal{F}^{\bullet,0}\arrow[r,"\delta^{\bullet,0}"] &\mathcal{F}^{\bullet,1}\arrow[r,"\delta^{\bullet,1}"] &\cdots.
        \end{tikzcd}\]
        such that $\mathrm{H}^n(X_p;\mathcal{F}^{p,q})=0$ for all $n\ge 1$.
        Define the double complex $(\mathcal{F}^{p,q}(X_p),d^{p,q},\delta^{p,q})$ by $d^{p,q}=\sum_{i=0}^{p+1}(-1)^i(\mathrm{d}_i^{p+1})^{\ast}$ and let $\mathrm{H}^{\ast}(X_{\bullet};A^{\bullet})$ denote the cohomology group of its total complex.
        (cf.\ \cite[Section 4]{De74} and \cite[Section 7]{Tu06})
        \item Define a cochain complex $(\check{\mathrm{C}}_{\mathrm{ss}}^{\ast}(\mathfrak{U}_{\bullet};A^{\bullet}),d^{\ast})$ by
        \[\check{\mathrm{C}}_{\mathrm{ss}}^n(\mathfrak{U}_{\bullet};A^{\bullet})=\prod_{i\in I_n}A^n(U_i^{(n)}),\quad (d^nc)_j=\sum_{i=0}^{n+1}(-1)^i(\mathrm{d}_i^{n+1})^{\ast}c_{\mathrm{d}_i^{n+1}(j)}.\]
        One can check that $d^n\circ d^{n-1}=0$.
        Let $\check{\mathrm{H}}_{\mathrm{ss}}^{\ast}(\mathfrak{U}_{\bullet};A^{\bullet})$ be the cohomology groups of this complex.
        Define the \emph{\v{C}ech cohomology} of $(X_{\bullet},A^{\bullet})$ by
        \[\check{\mathrm{H}}^{\ast}(X_{\bullet},A^{\bullet})=\varinjlim_{\mathfrak{U}}\check{\mathrm{H}}_{\mathrm{ss}}^{\ast}(\sigma\mathfrak{U}_{\bullet};A^{\bullet})\]
        (cf. \cite[p.4733]{Tu06}).
    \end{enumerate}
\end{defn}

These two cohomology groups are isomorphic $\mathrm{H}^{\ast}(X_{\bullet};A^{\bullet})\cong\check{\mathrm{H}}^{\ast}(X_{\bullet};A^{\bullet})$ by \cite[Proposition 7.1]{Tu06}.
Note that the constant sheaf $\underline{A}$ on $X_{\bullet}$ has simplicial structure.

\begin{defn}
    The semi-simplicial space $B_{\bullet}G$ is defined by $B_nG\coloneq G^n$ with structure maps
    \begin{align*}
        \mathrm{d}_i^n\colon B_nG\to B_{n-1}G;&& \mathrm{d}_i^n(g_1,\ldots,g_n) &= \begin{cases}
            (g_2,\ldots,g_n) & \text{if } i=0,\\
            (g_1,\ldots,g_ig_{i+1},\ldots,g_n) & \text{if } 0<i<n,\\
            (g_1,\ldots,g_{n-1}) & \text{if } i=n,
        \end{cases}\\
        \mathrm{s}_i^n\colon B_nG\to B_{n+1}G;&& 
        \mathrm{s}_i^n(g_1,\ldots,g_n) &= (g_1,\ldots,g_i,e,g_{i+1},\ldots, g_n).
    \end{align*}
    The semi-simplicial space $E_{\bullet}G$ is defined by $E_nG\coloneq G^{n+1}$ with structure maps
    \begin{align*}
        \mathrm{d}_i^n\colon E_nG\to E_{n-1}G;&& \mathrm{d}_i^n(g_0,\ldots,g_n) &= \begin{cases}
            (g_0,\ldots,g_ig_{i+1},\ldots,g_n) & \text{if } 0\le i<n,\\
            (g_0,\ldots,g_{n-1}) & \text{if } i=n,
        \end{cases}\\
        \mathrm{s}_i^n\colon E_nG\to E_{n+1}G;&& 
        \mathrm{s}_i^n(g_0,\ldots,g_n) &= (g_0,\ldots,g_i,e,g_{i+1},\ldots, g_n).
    \end{align*}
    We may consider that $G$ is a trivial simplicial subspace of $E_{\bullet}G$ under $G\ni g\mapsto (g,e,\ldots,e)\in E_nG$.
    Forgetting $g_0$ gives the simplicial map $\pi\colon E_{\bullet}G\to B_{\bullet}G$.
\end{defn}

\begin{lem}
    $\mathrm{H}^{\ast}(E_{\bullet}G;\underline{\mathbb{T}})=0$.
\end{lem}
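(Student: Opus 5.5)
We prove that $E_{\bullet}G$ is cohomologically trivial by exhibiting an extra degeneracy (a contracting homotopy) on the simplicial space $E_{\bullet}G$. We then transport it to the double complex defining $\mathrm{H}^{\ast}(E_{\bullet}G;\underline{\mathbb{T}})$.

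\emph{Step 1: the extra degeneracy.} Define continuous maps $\mathrm{s}_{-1}^n\colon E_nG\to E_{n+1}G$ by
\[
\mathrm{s}_{-1}^n(g_0,\ldots,g_n)=(e,g_0,g_1,\ldots,g_n).
\]
With the face maps as defined, one checks directly that
\begin{itemize}
\item $\mathrm{d}_0^{n+1}\mathrm{s}_{-1}^n=\id$, since $(e\cdot g_0,g_1,\ldots)=(g_0,\ldots,g_n)$;
\item $\mathrm{d}_{i+1}^{n+1}\mathrm{s}_{-1}^n=\mathrm{s}_{-1}^{n-1}\mathrm{d}_i^n$ for $0\le i\le n$.
\end{itemize}
Moreover $E_{-1}G=\pt$ with augmentation $E_0G=G\to\pt$, and $\mathrm{s}_{-1}^{-1}\colon\pt\to G$, $\pt\mapsto e$. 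Thus $E_{\bullet}G\to\pt$ is an augmented semi-simplicial space with extra degeneracy.

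\emph{Step 2: cochain level.} For the constant sheaf $\underline{\mathbb{T}}$, choose a functorial resolution, e.g.\ the Godement resolution $\mathcal{F}^{p,q}$. It satisfies the acyclicity hypothesis, and it is natural with respect to the maps $\mathrm{d}_i$ and also with respect to the extra maps $\mathrm{s}_{-1}$, since these are continuous maps of spaces. Hence we get maps
\[
h=(\mathrm{s}_{-1}^{p-1})^{\ast}\colon\mathcal{F}^{p,q}(E_pG)\to\mathcal{F}^{p-1,q}(E_{p-1}G),
\]
which commute with the vertical differential $\delta$. The identities of Step 1 give $dh+hd=\id$ in the horizontal direction for $p\ge1$, with augmentation to $\mathcal{F}^{q}(\pt)$ in degree $0$. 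So each row is contractible onto the augmented term $\Gamma(\pt,\mathcal{F}^{\bullet})$, whose cohomology is $\mathrm{H}^{\ast}(\pt;\mathbb{T})$. A standard spectral sequence argument (filtration by $q$, exact rows) then shows that the total cohomology equals $\mathrm{H}^{\ast}(\pt;\mathbb{T})$. This is $\mathbb{T}$ in degree $0$ and zero in positive degrees.

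\emph{Step 3: degree $0$.} The degree-$0$ class is the constant $\mathbb{T}$, so the statement must be read in reduced form, or for $\ast\ge1$. Alternatively, one can work with the Čech description via \cite[Proposition 7.1]{Tu06}. There one needs to check that $\mathrm{s}_{-1}$ is compatible with semi-simplicial covers $\sigma\mathfrak{U}$. The pullback covers are refinements, so passing to the colimit over covers absorbs this.

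\emph{Main obstacle.} The only delicate point is the homological algebra of Step 2: making the contracting homotopy act on the resolution compatibly with the double complex, including the sign conventions in $d^{p,q}$. Using a resolution that is functorial for all continuous maps makes this formal. I would first try the Godement resolution, falling back on the Čech model if naturality with respect to $\mathrm{s}_{-1}$ is awkward.
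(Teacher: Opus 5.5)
Your proposal is correct and follows the paper's argument: the paper's $h^{p,q}=(\mathrm{s}_0^{p-1})^{\ast}$ is, under the identification $E_pG=G^{p+1}=B_{p+1}G$, exactly your extra degeneracy $\mathrm{s}_{-1}$. The only difference is how naturality is obtained: the paper uses a flabby resolution on $B_{\bullet}G$, so that this map is a genuine structure map $\mathrm{s}_0$ of $B_{\bullet}G$, whereas you use the Godement resolution. Your degree-$0$ caveat is also right and is something the paper glosses over when it claims a homotopy from $\id$ to $0$ on the whole total complex: in fact $\mathrm{H}^0(E_{\bullet}G;\underline{\mathbb{T}})\cong\mathbb{T}$, so the lemma holds only in positive degrees, which are the only degrees used later (degrees $2$ and $3$, in the long exact sequence of the pair $(E_{\bullet}G,G)$).
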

\begin{proof}
    Let $\mathcal{F}^{\bullet,q}$ be a resolution of the simplicial sheaf $\underline{\mathbb{T}}$ on $B_{\bullet}G$ such that each $\mathcal{F}^{p,q}$ is flabby.
    Regard $\mathcal{G}^{p,q}\coloneqq\mathcal{F}^{p+1,q}$ as a simplicial sheaf on $E_pG=G^{p+1}$.
    We claim that the map
    \[h^{p,q}\coloneqq(\mathrm{s}_0^{p-1})^{\ast}\colon\mathcal{G}^{p,q}(E_pG)\to\mathcal{G}^{p-1,q}(E_{p-1}G),\]
    gives the chain homotopy from $\id$ to $0$ of the total complex.
\end{proof}

Wigner's definition of semi-simplicial group cohomology is $\mathrm{H}_{\mathrm{grp,ss}}^{\ast}(G;A)=\check{\mathrm{H}}^{\ast}(B_{\bullet}G;\underline{A})$.
It is known that this cohomology is isomorphic to Moore's measurable cohomology (see \cite{AM13} and \cite[Proposition 6.2]{Tu06}).
The explicit construction of this isomorphism is given in \cite[Remark 6.3]{Tu06} as follows:
Fix an open cover $\mathfrak{U}_{\bullet}$.
Choose measurable maps $\theta_{\bullet}\colon B_{\bullet}G\to I_{\bullet}$ such that $(g_1,\ldots,g_n)\in U_{\theta_n(g_1,\ldots,g_n)}^{(n)}$, and set a semi-simplicial map $\lambda_n\colon B_nG\to\Lambda_n(I)$ by $[\lambda_n(g_1\ldots,g_n)](f)=\theta_m(\tilde{f}(g_1,\ldots,g_n))$ for every $f\in\Hom_{\Delta'}(m,n)$.
Now the correspondence $\check{\mathrm{C}}_{\mathrm{ss}}^n(\sigma\mathfrak{U}_{\bullet};\underline{A})\ni\varphi\mapsto c\in\mathrm{C}_{\mathrm{grp,m}}^n(G;A)$ defined by
\[c(g_1,\ldots,g_n)=\varphi_{\lambda_n(g_1,\ldots,g_n)}(g_1,\ldots,g_n)\]
induces the isomorphism of cohomology groups.

\begin{prop}
    Let $G$ be a 2-connected compact group, $\kappa\colon G\to\Out(M)$ a $G$-kernel on a full factor $M$.
    Suppose that $\kappa$ has the continuous local section to $\Aut(M)$.
    Then the obstruction $\Ob(\kappa)\in\mathrm{H_{grp}^3}(G;\mathbb{T})$ corresponds to the Dixmier--Douady invariant in $\mathrm{H}^3(G;\mathbb{Z})$ of $P\U(M)$-bundle $\tilde{G}\to G$ with respect to isomorphisms
    \[\mathrm{H^3_{grp}}(G;\mathbb{T})\cong\mathrm{H}^4(BG;\mathbb{Z})=[BG,K(\mathbb{Z},4)]\cong\Hom(\pi_3(G),\mathbb{Z})\cong\mathrm{H}^3(G;\mathbb{Z}).\]
\end{prop}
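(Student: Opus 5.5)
The plan is to compare two classes on the simplicial space $B_{\bullet}G$, both viewed in $\check{\mathrm{H}}^3(B_{\bullet}G;\underline{\mathbb{T}})\cong\mathrm{H}^3_{\mathrm{grp}}(G;\mathbb{T})$. The first is the measurable class $\Ob(\kappa)$, transported through Tu's isomorphism recalled above. The second is the class obtained from the multiplicative structure of the bundle $\tilde{G}\to G$. Concretely, $\tilde{G}$ is a group extension $P\U(M)\to\tilde{G}\to G$, so it defines a multiplicative $P\U(M)$-bundle. Equivalently, it defines a $P\U(M)$-bundle over the semi-simplicial space $B_{\bullet}G$. By the standard Segal--Brylinski--Tu formalism, its characteristic class lives in $\check{\mathrm{H}}^2(B_{\bullet}G;\underline{P\U(M)})\cong\check{\mathrm{H}}^3(B_{\bullet}G;\underline{\mathbb{T}})$, using the contractibility of $\U(M)$ and the exact sequence $\mathbb{T}\to\U(M)\to P\U(M)$ of sheaves. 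Its restriction to the level-one space $B_1G=G$ is exactly the Dixmier--Douady class of $\tilde{G}\to G$.

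The first step is to build a \v{C}ech representative from local data. By hypothesis there are continuous local sections $\alpha^{(i)}\colon U_i\to\Aut(M)$ of $\kappa$. On $B_2G$ I take a cover by the sets $U_{ijk}=\{(g,h):g\in U_i,h\in U_j,gh\in U_k\}$, refined so that the inner automorphism $\alpha^{(i)}_g\alpha^{(j)}_h(\alpha^{(k)}_{gh})^{-1}$ lifts to a continuous unitary $u_{g,h}$. Fullness gives a homeomorphism $P\U(M)\cong\Int(M)$ and local sections of $\U(M)\to P\U(M)$, which is what makes this lift possible. The defect of the 2-cocycle identity on $B_3G$ is then a continuous $\mathbb{T}$-valued function on the pieces of a semi-simplicial cover $\sigma\mathfrak{U}_{\bullet}$. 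Together with the transition data, this gives a total cocycle in $\check{\mathrm{C}}_{\mathrm{ss}}$. Applying the explicit map of \cite[Remark 6.3]{Tu06}, with measurable choice functions $\theta_{\bullet}$, this cocycle becomes exactly the measurable cocycle $c$ defining $\Ob(\kappa)$: pick $\alpha_g=\alpha^{(\theta_1(g))}_g$ and $u$ accordingly. So $\Ob(\kappa)$ equals the simplicial class of the extension.

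Next I must show that this simplicial class maps to the Dixmier--Douady class under the displayed chain of isomorphisms. I would use the spectral sequence, or filtration by simplicial degree, for $\mathrm{H}^{\ast}(B_{\bullet}G;\underline{\mathbb{Z}})\cong\mathrm{H}^{\ast}(BG;\mathbb{Z})$. Because $G$ is 2-connected, $\mathrm{H}^1(G)=\mathrm{H}^2(G)=0$ and $\mathrm{H}^q(G^p)$ vanishes for $0<q<3$. The only surviving contribution to $\mathrm{H}^4(BG;\mathbb{Z})$ is therefore $E_1^{1,3}=\mathrm{H}^3(G;\mathbb{Z})$ restricted to primitive classes. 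The edge map $\mathrm{H}^4(BG;\mathbb{Z})\to\mathrm{H}^3(G;\mathbb{Z})$ is the transgression/suspension, which agrees with $[BG,K(\mathbb{Z},4)]\cong\Hom(\pi_3G,\mathbb{Z})\cong\mathrm{H}^3(G;\mathbb{Z})$ by the Hurewicz theorem. This edge map is induced by restricting a total cocycle to its component on $B_1G$. On our representative that component is the \v{C}ech 2-cocycle of $\tilde{G}\to G$ with values in $\underline{P\U(M)}$, i.e.\ its Dixmier--Douady class. The earlier lemma $\mathrm{H}^{\ast}(E_{\bullet}G;\underline{\mathbb{T}})=0$ enters here, through $E_{\bullet}G\to B_{\bullet}G$ with fibre $G$, to identify the connecting map with the suspension, and sign conventions are fixed by comparing the degree shift $\mathbb{T}\to\mathbb{Z}$ on both sides.

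The main obstacle is the second step: checking that Tu's comparison isomorphism between measurable and \v{C}ech simplicial cohomology is compatible with the bundle-theoretic interpretation. In particular, the measurable lift used for $\Ob(\kappa)$ must be cohomologous to the local-section data. I would first attempt this by choosing $\theta_1$ so that $\alpha_g$ literally equals $\alpha^{(\theta_1(g))}_g$, so that the two cocycles coincide on the nose rather than merely up to coboundary.
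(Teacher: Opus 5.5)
Your first step coincides with the paper's Step 1. It uses continuous local sections $\phi_i$ and continuous unitary lifts $\psi_j$ of $\phi_{\mathrm{d}_2j}(g)\circ\phi_{\mathrm{d}_0j}(h)\circ\phi_{\mathrm{d}_1j}(gh)^{-1}$, with fullness making $\Int(M)=P\U(M)$ topologically. It then takes the $\mathbb{T}$-valued defect $\varphi_l$ on $B_3G$, and chooses $\alpha_g=\phi_{\lambda_1(g)}(g)$, $u_{g,h}=\psi_{\lambda_2(g,h)}(g,h)$ so that Tu's map sends $\varphi$ to $c$ on the nose. So the compatibility you single out as the main obstacle is actually the easy part.

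The gap is in your second step. The cochain $\varphi$ lives in Tu's complex $\check{\mathrm{C}}^3_{\mathrm{ss}}(\sigma\mathfrak{U}_{\bullet};\underline{\mathbb{T}})=\prod_l\underline{\mathbb{T}}(U^{(3)}_l)$, i.e.\ it is concentrated on $B_3G$. It contains no ``transition data'' and has no component on $B_1G$. The edge map of your filtration-by-simplicial-degree spectral sequence is defined on a \v{C}ech--simplicial double complex. So your sentence ``on our representative that component is the Dixmier--Douady cocycle'' presupposes one of two things. Either you need an explicit total cocycle $(c^{1,2},c^{2,1},c^{3,0})$ built from $\phi,\psi$, together with a proof that it corresponds to $\varphi$ under Tu's comparison \cite[Proposition 7.1]{Tu06}. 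Or you need some way to read level-one data off the level-three cochain $\varphi$. You supply neither, and this identification is precisely the content of the proposition.

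The paper takes the second route, and this is where $\mathrm{H}^{\ast}(E_{\bullet}G;\underline{\mathbb{T}})=0$ is really used. Identify $E_nG=G^{n+1}=B_{n+1}G$; then the faces of $E_{\bullet}G$ are $\mathrm{d}_1,\ldots,\mathrm{d}_{n+1}$ of $B_{\bullet}G$, and $\pi=\mathrm{d}_0$. Under this identification, the cocycle identity $d\varphi=0$ says that $\varphi$, regarded as a $2$-cochain $\eta$ on $E_2G=G^3$, satisfies $d\eta=\pi^{\ast}\varphi$. Hence $\partial^{-1}\pi^{\ast}[\varphi]$ is represented by the restriction of $\eta$ to $G\hookrightarrow E_{\bullet}G$, $g\mapsto(g,e,e)$.

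This evaluation needs the normalizations $\phi_i(e)=\id_M$ and $\psi_j(e,e)=1$, to kill the factor $[\phi(g)](\psi_{\mathrm{d}_0l}(e,e))$; your proposal never imposes them. With them one gets $\varphi_l(g,e,e)=\psi_{\mathrm{d}_2l}(g,e)\psi_{\mathrm{d}_1l}(g,e)^{-1}\psi_{\mathrm{d}_3l}(g,e)^{-1}$. This is the \v{C}ech coboundary of the unitary lifts $\psi_j(g,e)$ of the transition functions $\phi_{\mathrm{d}_2j}(g)\circ\phi_{\mathrm{d}_1j}(g)^{-1}$ of $\tilde{G}\to G$, i.e.\ the Dixmier--Douady cocycle. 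Made precise for Tu's complex, your edge-map argument would have to pass through a degenerate-simplex evaluation of this kind, so it does not replace it.

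Two smaller points. The transition functions form a $P\U(M)$-valued \v{C}ech $1$-cocycle; it is the defect of their unitary lifts that is the $\underline{\mathbb{T}}$-valued $2$-cocycle. Also, ``$\check{\mathrm{H}}^2(B_{\bullet}G;\underline{P\U(M)})$'' is not meaningful for the non-abelian sheaf $\underline{P\U(M)}$.
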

\begin{proof}
    Step1:
    We observe the isomorphisms $\mathrm{H_{grp,m}^3}(G;\mathbb{T})\cong\mathrm{H_{grp,ss}^3}(G;\mathbb{T})=\check{\mathrm{H}}^3(B_{\bullet}G;\underline{\mathbb{T}})$.
    There exists $\Obss(\kappa)\in\check{\mathrm{H}}^3(B_{\bullet}G;\underline{\mathbb{T}})$ corresponding to $\Ob(\kappa)$, defined by the following steps.
    Here $\mathfrak{U}_{\bullet}'$ is sufficiently fine open cover and put $\mathfrak{U}_{\bullet}=\sigma\mathfrak{U}_{\bullet}'$.
    \begin{enumerate}[label=(\roman*)]
        \item Let $\phi_i\colon U_i^{(1)}\to\Aut(M)$ be a continuous section with $\phi_i(e)=\id_M$ if $e\in U_i^{(1)}$;
        \item Let $\psi_j\colon U_j^{(2)}\to\U(M)$ be a continuous lift of
        \[\bar{\psi}_j\colon U_j^{(2)}\ni(g,h)\mapsto\phi_{\mathrm{d}_2j}(g)\circ\phi_{\mathrm{d}_0j}(h)\circ\phi_{\mathrm{d}_1j}(gh)^{-1}\in\Int(M)=P\U(M)\]
        such that $\psi_j(g,h)=1$ if either $g$ or $h$ is $e$;
        \item Define $\varphi_l\colon U_l^{(3)}\to\mathbb{T}$ by
        \[\varphi_l(g,h,k)=[\phi_{\mathrm{d}_2\mathrm{d}_3l}(g)](\psi_{\mathrm{d}_0l}(h,k))\cdot\psi_{\mathrm{d}_2l}(g,hk)\cdot\psi_{\mathrm{d}_1l}(gh,k)^{-1}\cdot\psi_{\mathrm{d}_3l}(g,h)^{-1}.\]
        Put $\Obss(\kappa)\coloneqq\varphi=(\varphi_l)_l$.
    \end{enumerate}
    Then $\Obss(\kappa)$ maps to $\Ob(\kappa)$.
    Indeed, two maps $\alpha\colon G\ni g\mapsto\phi_{\lambda_1(g)}(g)\in\Aut(M)$ and $u\colon G\times G\ni(g,h)\mapsto \psi_{\lambda_2(g,h)}(g,h)\in\U(M)$ satisfy $\varepsilon_M\circ\alpha=\kappa$ and $\Ad(u_{g,h})\circ\alpha_{gh}=\alpha_g\circ\alpha_h$.
    Thus, $[c]=\Ob(\kappa)$ is
    \begin{align*}
        c(g,h,k) &= \alpha_g(u_{h,k})u_{g,hk}u_{gh,k}^{\ast}u_{g,h}^{\ast}\\
        &= [\phi_{\lambda_1(g)}(g)](\psi_{\lambda_2(h,k)}(h,k))\cdot\psi_{\lambda_2(g,hk)}(g,hk)\cdot\psi_{\lambda_2(gh,k)}(gh,k)^{-1}\cdot\psi_{\lambda_2(g,h)}(g,h)^{-1}\\
        &= [\phi_{\mathrm{d}_2\mathrm{d}_3\lambda_3(g,h,k)}(g)](\psi_{\mathrm{d}_0\lambda_3(g,h,k)}(h,k))\cdot\psi_{\mathrm{d}_2\lambda_3(g,h,k)}(g,hk)\\
        &\mathrel{\phantom{=}} \cdot\psi_{\mathrm{d}_1\lambda_3(g,h,k)}(gh,k)^{-1}\cdot\psi_{\mathrm{d}_3\lambda_3(g,h,k)}(g,h)^{-1}\\
        &= \varphi_{\lambda_3(g,h,k)}(g,h,k).
    \end{align*}

    Step2:
    Next we consider $\check{\mathrm{H}}^3(B_{\bullet}G;\underline{\mathbb{T}})\cong\check{\mathrm{H}}^2(G;\underline{\mathbb{T}})\cong\mathrm{H}^3(G;\mathbb{Z})$.
    Recall $\pi\colon E_{\bullet}G\to B_{\bullet}G$ is defined by $(g_0,\ldots,g_n)\mapsto(g_1,\ldots,g_n)$, hence $G\subset E_{\bullet}G$ is the inverse image of $\pt\subset B_{\bullet}G$ under $\pi$.
    Thus, $\pi$ induces $\pi^{\ast}\colon\check{\mathrm{H}}^{\ast}(B_{\bullet}G,\pt;\underline{\mathbb{T}})\to\check{\mathrm{H}}^{\ast}(E_{\bullet}G,G;\underline{\mathbb{T}})$.
    We have the long exact sequence
    \[\begin{tikzcd}
        \cdots\to\check{\mathrm{H}}^{\ast}(E_{\bullet}G;\underline{\mathbb{T}})\arrow[r] &\check{\mathrm{H}}^{\ast}(G;\underline{\mathbb{T}})\arrow[r,"\partial"] &\check{\mathrm{H}}^{\ast+1}(E_{\bullet}G,G;\underline{\mathbb{T}})\arrow[r] &\check{\mathrm{H}}^{\ast+1}(E_{\bullet}G;\underline{\mathbb{T}})\to\cdots.
    \end{tikzcd}\]
    Since $\check{\mathrm{H}}^{\ast}(E_{\bullet}G;\underline{\mathbb{T}})=0$, $\partial$ is an isomorphism.
    There exists a commuting diagram:
    \[\begin{tikzcd}
        \mathrm{H}^3(B_{\bullet}G,\pt;\underline{\mathbb{T}})\arrow[r,"\pi^{\ast}"]\arrow[d,"\delta"] & \mathrm{H}^3(E_{\bullet}G,G;\underline{\mathbb{T}})\arrow[d,"\delta"] &\mathrm{H}^2(G;\mathbb{T})\arrow[l,"\partial"']\arrow[d,"\delta"]\\
        \mathrm{H}^4(B_{\bullet}G,\pt;\mathbb{Z})\arrow[r,"\pi^{\ast}"]\arrow[d,"\cong"] & \mathrm{H}^4(E_{\bullet}G,G;\mathbb{Z})\arrow[d,"\cong"] &\mathrm{H}^3(G;\mathbb{Z})\arrow[l,"\partial"']\arrow[d,"\cong"]\\
        \Hom(\pi_4(BG,\pt),\mathbb{Z})\arrow[r,"\pi_{\ast}\circ-"] &\Hom(\pi_4(EG,G),\mathbb{Z})\arrow[r] &\Hom(\pi_3(G),\mathbb{Z}).
    \end{tikzcd}\]
    The isomorphism $\partial^{-1}\circ\pi^{\ast}\colon\mathrm{H}^3(B_{\bullet}G;\underline{\mathbb{T}})\cong\mathrm{H}^2(G;\underline{\mathbb{T}})$ is explicitly constructed as follows.
    Let $\xi\in\mathrm{H}^3(B_{\bullet}G,\pt;\underline{\mathbb{T}})$.
    We find $\eta\in\mathrm{C}^2(E_{\bullet}G;\underline{\mathbb{T}})$ such that $\pi^{\ast}\xi=d\eta$.
    Restrict it on $G\subset E_{\bullet}G$, we have $d\eta|_G=\pi^{\ast}\xi|_{\pt}=0$ and hence $\eta|_G$ is a cocycle.
    $\xi\mapsto\eta|_G$ gives this isomorphism.

    Now, for $\xi=\Obss(\kappa)$, we can take $\eta$ as
    \[\eta_l(g,h,k)=\varphi_l(g,h,k).\]
    Restrict $\eta$ on $G\subset E_{\bullet}G$ and we obtain a \v{C}ech cocycle
    \[\eta_l(g,e,e)=\varphi_l(g,e,e)=\psi_{d_2l}(g,e)\cdot\psi_{d_1l}(g,e)^{-1}\cdot\psi_{d_3l}(g,e)^{-1}\]
    over a topological space $G$.
    Now, $\bar{\psi}_j(g,e)=\phi_{d_2j}(g)\circ\phi_{d_1j}(g)^{-1}\in P\U(M)$ is nothing but a transition function of the principal $P\U(M)$-bundle $\tilde{G}$ over $G$.
    Therefore, $\varphi_l(g,e,e)$ is the derivation of the lift $\psi_j(g,e)$ of $\bar{\psi}_j(g,e)$, which is precisely the element of $\check{\mathrm{H}}^2(G;\underline{\mathbb{T}})$ associated with the principal $P\U(M)$-bundle $\tilde{G}$.
\end{proof}

Recall that the \emph{gauge group} $\Gau(P)$ of a principal bundle $P\to X$ is the group of bundle automorphisms of $P$ that induces the identity on the base space $X$.

\begin{lem}
    In this setting, we can choose a representing 3-cocycle $c$ for the generator of $\mathrm{H_{grp,m}^3}(G;\mathbb{T})\cong\mathbb{Z}$ that satisfies the condition in Remark \ref{rem:localsection}.
\end{lem}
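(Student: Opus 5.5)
The condition in Remark~\ref{rem:localsection} asks for a nonempty open $U\subset G$ on which $k\mapsto c(\cdot,\cdot,k)\in L^{\infty}(G\times G)$ is SOT-continuous. Continuity in this sense means continuity of the multiplication operators on $L^2(G\times G)$. So it suffices to find a representative $c$ that is \emph{continuous} on $G\times G\times U$ for some open $U\ni e$. Dominated convergence then gives $\|(c(\cdot,\cdot,k)-c(\cdot,\cdot,k_0))\xi\|_2\to0$. My plan is to start from any measurable generator and then modify it by a coboundary so that it becomes continuous near the diagonal, in the last variable.

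To get such a representative, I would use the \v{C}ech--to--measurable comparison recalled before the Proposition. Since $\mathrm{H}^3_{\mathrm{grp,ss}}(G;\mathbb{T})=\check{\mathrm{H}}^3(B_\bullet G;\underline{\mathbb{T}})$, the generator is represented by a \v{C}ech cocycle $\varphi=(\varphi_l)_l$ on $\sigma\mathfrak{U}_\bullet$ with each $\varphi_l$ continuous on $U^{(3)}_l$. The associated measurable cocycle is $c(g,h,k)=\varphi_{\lambda_3(g,h,k)}(g,h,k)$. Its only discontinuities come from the measurable choice functions $\theta_m$ in $\lambda_3$. I would choose the covers and the $\theta_m$ so that one open set $U^{(1)}_{i_0}\ni e$ of $\mathfrak{U}_1$ is favoured: put $\theta_1(k)=i_0$ for $k$ in a smaller neighbourhood $U$ of $e$. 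Then $\lambda_3(g,h,k)$ depends on $k\in U$ only through the faces involving $k$. These are the indices $\theta_1(k)$, $\theta_2(h,k)$, $\theta_2(gh,k)$ and $\theta_3(g,h,k)$, and their dependence on $k$ is the remaining problem.

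The main obstacle is that $\theta_2$ and $\theta_3$ still vary measurably in $(g,h)$, and one cannot make them locally constant in $k$ uniformly in $(g,h)$ by a naive choice. My first attempt would be to choose $\theta_n$ on the product $(\text{Borel partition of }G^{n-1})\times U$. A finite Borel partition $\{P_j\}$ of $G^{n-1}$ is chosen fine enough, using compactness of $G$ and a Lebesgue number, that each $\overline{P_j}\times U$ lies in a single member of $\mathfrak{U}_n$. Then $\theta_n(x,k)$ is defined independently of $k\in U$ on each piece. With this choice, for fixed $(g,h)$ the function $k\mapsto c(g,h,k)$ on $U$ equals $\varphi_l(g,h,k)$ for a fixed $l$, hence is continuous. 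The modulus of continuity is uniform over the finitely many $l$ and the compact closures, which gives the required continuity in $L^\infty$ with SOT.

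The delicate point is compatibility: the partitions must be chosen coherently across degrees $n=1,2,3$ so that $\lambda_n$ is genuinely semi-simplicial. I would handle this by choosing the partitions inductively in $n$, refining each so that its face images lie inside the previously chosen pieces. If this fails, I would fall back on a different route. That route is to take a smooth Lie-algebraic representative, namely the cocycle obtained by integrating the Cartan $3$-form $\langle X,[Y,Z]\rangle$ via geodesic simplices near the identity, which is continuous on a neighbourhood of the identity. It would then be extended measurably, and the cohomology class checked against the generator using the Proposition.
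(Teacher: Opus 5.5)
\textbf{There is a genuine gap: the representative you aim for cannot exist.} Your reduction asks for a representative $c$ that is continuous on $G\times G\times U$. No such representative exists for the generator.

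Suppose $c$ were continuous there. The cocycle identity $\delta c(g,h,k,l)=1$ with $l\in U$ gives
\[
c(g,h,kl)=c(g,h,k)\,c(h,k,l)\,c(gh,k,l)^{-1}\,c(g,hk,l),
\]
and the last three factors are jointly continuous on $G^3\times U$. Three consequences follow.
\begin{enumerate}
\item The map $k\mapsto c(\cdot,\cdot,k)$ is uniformly continuous for the sup-norm on all of $G$, after discarding null sets.
\item Since $c(\cdot,\cdot,k)$ is continuous for $k\in U$ and $U$ generates $G$, $c$ agrees a.e.\ with a jointly continuous cocycle on $G^3$.
\item But $\mathrm{H}^3_{\mathrm{cont}}(G;\mathbb{T})=0$ here. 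Because $G^n$ is simply connected, $C(G^n,\mathbb{T})=C(G^n,\mathbb{R})/\mathbb{Z}$. Continuous real cohomology of a compact group vanishes in positive degrees, and the complex of constant $\mathbb{Z}$-valued cochains is acyclic in positive degrees. Hence $[c]=0$.
\end{enumerate}
Geometrically, Theorem~\ref{thm:main1} would then yield a lift $G\to\Aut(M)$ continuous on all of $G$, which would trivialize $\tilde{G}\to G$. Your claimed modulus of continuity uniform in $(g,h)$ would give something even stronger.

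\textbf{Where the argument breaks.} The error is in the bookkeeping of $\lambda_3$. $\lambda_3(g,h,k)$ records $\theta_m$ on \emph{all} faces: the edges $g,h,k,gh,hk,ghk$ and the $2$-faces $(h,k),(gh,k),(g,hk),(g,h)$. You omitted $\theta_1(hk)$, $\theta_1(ghk)$ and $\theta_2(g,hk)$. Here $hk$ and $ghk$ range over all of $G$, so your partitions of $G^{n-1}\times U$ do not control these indices. By the argument above, whatever the choice of $\theta_m$, some index must jump as $k$ moves for some $(g,h)$.

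By contrast, semi-simpliciality of $\lambda_n$ is automatic from $[\lambda_n(x)](f)=\theta_m(\tilde f(x))$; no inductive coherence is needed. Your fallback has the same defect. A local cocycle built from the Cartan $3$-form is controlled only near $(e,e,e)$, which says nothing about $c(g,h,k)$ for general $(g,h)$. Extending it to a global measurable cocycle in the right class is the actual difficulty.

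\textbf{What is actually needed.} The Remark only requires $\|c(\cdot,\cdot,k_i)-c(\cdot,\cdot,k)\|_2\to0$, which tolerates jumps on null sets. The right intermediate target is this: $c$ is continuous on an open $W\subset G^3$ whose slices $W_k=\{(g,h):(g,h,k)\in W\}$, for $k\in U$, are \emph{conull}; density alone is not enough. Then one picks a compact $A\subset W_k$ with $|G^2\setminus A|<\varepsilon$ and a neighbourhood $U'$ of $k$ with $A\times U'\subset W$. Convergence is uniform on $A$, and the $L^2$-distance is at most about $5\varepsilon$.

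This is the paper's route. It obtains $W$ from the Nikolaus--Sachse--Wockel model:
\begin{enumerate}
\item a measurable section $\phi$ of $\tilde{G}\to G$ that is continuous on an open $U$;
\item a lift $\psi$ of $\phi(g)\phi(h)\phi(gh)^{-1}$ to the central extension, continuous where $g,h,gh\in U$;
\item then $c$ is continuous on $W=\{g,h,k,gh,hk,ghk\in U\}$.
\end{enumerate}
The conditions $hk,ghk\in U$ are exactly the discontinuities you missed. Your \v{C}ech route could be repaired in the same spirit, and would then avoid NSW. You would choose each $\theta_m$ locally constant off a closed null set whose relevant slices are null. Then, for fixed $k$, the jumps coming from $hk$, $ghk$ and $(g,hk)$ occur only on a null set of $(g,h)$.
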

\begin{proof}
    Thanks to the NSW-model (\cite{NSW13}) for $\String(G)$, we find that the following data.
    \begin{itemize}
        \item A principal $P\U(\mathcal{H})$-bundle $P\to G$ corresponding to the generator of $\mathrm{H}^3(G;\mathbb{Z})\cong\mathbb{Z}$, where $\mathcal{H}$ denotes the separable infinite-dimensional Hilbert space;
        \item An extension of topological groups $\Gau(P)\longinto\tilde{G}\longonto G$;
        \item A central extension $\mathbb{T}\longinto\tilde{\Gau}(P)\longonto\Gau(P)$.
    \end{itemize}
    Since the extension $\tilde{G}\to G$ is locally trivial, there exist a measurable section $\phi\colon G\to\tilde{G}$ and a open dense subset $U\subset G$ such that $\phi$ is continuous on $U$.
    Then the map $\bar{\psi}\colon G\times G\ni(g,h)\mapsto \phi(g)\phi(h)\phi(gh)^{-1}\in\tilde{G}$ takes values in $\Gau(P)$.
    We can choose a measurable section $s\colon\Gau(P)\to\tilde{\Gau}(P)$ such that the lift $\psi=s\circ\bar{\psi}$ is continuous on the open dense subset $V=\{(g,h)\in G^2\mid g,h,gh\in U\}$ (by replacing $U$ with a smaller one if necessary).
    Note that for $\alpha\in\tilde{G}$, $\Ad{\alpha}\in\Aut(\Gau(P))$ extends uniquely to an automorphism of $\tilde{\Gau}(P)$.
    We now define $c$ by $c(g,h,k)=[\Ad{\phi(g)}](\psi(h,k))\psi(g,hk)\psi(gh,k)^{-1}\psi(g,h)^{-1}$, which satisfies the 3-cocycle identity and is representing the generator since the principal bundle $P$ corresponds to the generator of $\mathrm{H}^3(G;\mathbb{Z})\cong\mathrm{H_{grp,m}^3}(G;\mathbb{T})$.
    By construction, the function $c$ is continuous on the set
    \[W=\{(g,h,k)\in G^3\mid g,h,k,gh,hk,ghk\in U\},\]
    which is open and dense in $G^3$.
    
    Fix $k\in U$ and a net $k_i\in U$ converging to $k$.
    Let $W_k$ denote the slice $\{(g,h)\in G^2\mid (g,h,k)\in W\}$.
    Since $W_k=\{(g,h)\mid g,h,gh,hk,ghk\in U\}$ is an intersection of open dense subsets, it is also open and dense in $G^2$, and thus it has full measure.
    For any $\varepsilon>0$, there exist a compact subset $A\subset W_k$ and a neighborhood $U'$ of $k$ such that $|G^2\setminus A|<\varepsilon$ and $A\times U'\subset W$.
    Then the convergence $c(g,h,k_i)\to c(g,h,k)$ as $i\to\infty$ is uniform on $(g,h)\in A$.
    For sufficiently large $i$, we have $|c(g,h,k_i)-c(g,h,k)|^2<\varepsilon$ for every $(g,h)\in A$ and hence
    \begin{align*}
        \|c(\cdot,\cdot,k_i)-c(\cdot,\cdot,k)\|_2^2 &= 4|G^2\setminus A|+\int_A|c(g,h,k_i)-c(g,h,k)|^2\,d(g,h)\\
        &\le 4\varepsilon+\int_A\varepsilon\,d(g,h)\\
        &< 5\varepsilon.
    \end{align*}
    Thus, $c$ satisfies the requirement in Remark \ref{rem:localsection}.
\end{proof}

As a summary, we have the following corollary.

\begin{cor}
    Let $G$ be a compact, simple, and simply-connected Lie group.
    Then there exist a full $\II_1$ factor $M$ and a principal $P\U(M)$-bundle $\tilde{G}\to G$ corresponding to the generator of $\mathrm{H}^3(G;\mathbb{Z})\cong\mathbb{Z}$.
    In particular, this provides the short exact sequence of Polish groups $P\U(M)\longinto\tilde{G}\longonto G$, which is a Hurewicz fibration.
    The group $\tilde{G}$ is a Polish group model for the string group $\String(G)$.
\end{cor}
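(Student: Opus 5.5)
The plan is to assemble the corollary from the results just proved. Take $M=P^{\otimes\mathbb{F}_2}\rtimes\mathbb{F}_2$ from Theorem \ref{thm:realization}; it is a full $\II_1$ factor by Proposition \ref{prop:fullness}. By the preceding lemma, choose a measurable 3-cocycle $c$ representing the generator of $\mathrm{H}^3_{\mathrm{grp,m}}(G;\mathbb{T})\cong\mathbb{Z}$ that satisfies the continuity condition of Remark \ref{rem:localsection}. Theorem \ref{thm:main1}, with $n=2$, then gives a $G$-kernel $\kappa$ on $M\cong L^{\infty}(G)^{\otimes\mathbb{F}_2}\rtimes\mathbb{F}_2$ with $\Ob(\kappa)=[c]$. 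Since $G$ is simple, $\kappa$ is faithful (its kernel is a normal subgroup on which the obstruction restricts trivially, and a generator cannot come from a proper quotient), or we simply replace $G$ by $\kappa(G)$. By Remark \ref{rem:localsection}, $\kappa$ admits a continuous local section $U\to\Aut(M)$ near some point $k_0$, and translating by $\alpha_{k_0}^{-1}$ moves it to a neighbourhood of $e$.

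Next I would set up the topology. Because $M$ is full, $\Int(M)=P\U(M)$ is closed in $\Aut(M)$, so $\Out(M)$ is Hausdorff and $\varepsilon_M$ is continuous. Hence $\tilde G=\varepsilon_M^{-1}(\kappa(G))$ is a closed subgroup of the Polish group $\Aut(M)$, and is itself Polish. The map $\tilde G\to G$, $\sigma\mapsto\kappa^{-1}(\varepsilon_M(\sigma))$, is continuous: $\kappa\colon G\to\kappa(G)$ is a continuous bijection from a compact space onto a Hausdorff space, hence a homeomorphism. Its kernel is $\Int(M)\cong P\U(M)$, where the $u$-topology on $\Int(M)$ agrees with the quotient strong topology on $\U(M)/\mathbb{T}$ for full factors. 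Using the local section $s$ near $e$, I get local trivializations $s(g)\cdot P\U(M)$ over translates of $U$. So $\tilde G\to G$ is a locally trivial principal $P\U(M)$-bundle over a paracompact base, and is therefore a Hurewicz fibration.

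The last step identifies the bundle with $\String(G)$. By the preceding proposition, the Dixmier--Douady class of $\tilde G\to G$ is the image of $\Ob(\kappa)=[c]$, which is a generator of $\mathrm{H}^3(G;\mathbb{Z})$. In the long exact homotopy sequence, $P\U(M)$ is a $K(\mathbb{Z},2)$ because $\U(M)$ is contractible. The boundary map $\pi_3(G)\to\pi_2(P\U(M))\cong\mathbb{Z}$ is, up to sign, the evaluation of the DD class on $\pi_3(G)\cong\mathrm{H}_3(G)$, so it is an isomorphism. It follows that $\pi_k(\tilde G)=0$ for $k\le3$ and $\pi_k(\tilde G)\cong\pi_k(G)$ for $k\ge4$. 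Thus $\tilde G\to G$ is a 3-connected cover, i.e.\ a model for $\String(G)$.

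I expect the main obstacle to be this last identification: checking that the boundary map equals the pairing with the DD invariant. I would first try to deduce this from the classifying map $G\to BP\U(M)=K(\mathbb{Z},3)$ representing the generator, pulling back the path fibration. A secondary concern is the agreement of topologies on $\Int(M)$, which fullness should settle.
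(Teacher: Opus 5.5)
Your overall route is the paper's. The corollary is just the assembly of Theorem~\ref{thm:realization} (with $n=2$), the lemma producing a generator $c$ that satisfies Remark~\ref{rem:localsection}, and the proposition identifying $\Ob(\kappa)$ with the Dixmier--Douady class. The parts you add are correct and fill in what the paper leaves implicit: closedness of $\Int(M)$ for full $M$, local triviality from a translated local section, Hurewicz uniformization, and the long exact sequence in which $\partial\colon\pi_3(G)\to\pi_2(P\U(M))\cong\mathbb{Z}$ is pairing with the DD class.

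The step that fails as written is your justification that $\kappa$ is faithful. It is not true that a generator of $\mathrm{H}^3(G;\mathbb{T})\cong\mathrm{H}^4(BG;\mathbb{Z})$ can never be inflated from a proper quotient $G/K$. The semispin group $\Spin(16)/\mathbb{Z}_2$ is a maximal-rank subgroup of $E_8$, and its Lie algebra $D_8$ has Dynkin index $1$. So for $G=\Spin(16)$ the generator is pulled back from $E_8$ through $\Spin(16)/\mathbb{Z}_2$; likewise $\mathrm{SU}(8)/\mathbb{Z}_2\subset E_7$. Your fallback of replacing $G$ by $\kappa(G)$ does not help either, since it gives a bundle over $G/K$, not a $3$-connected cover of $G$.

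Faithfulness should instead be checked directly on the construction. Since the Bernoulli action is essentially free, $A=L^{\infty}(G)^{\otimes\mathbb{F}_2}$ is a Cartan subalgebra of $M$. If $\alpha_g=\Ad w$, then $w$ normalizes $A$, so $\rho_g=\alpha_g|_A$ must be implemented by an element of the full group of the Bernoulli orbit equivalence relation. Now fix $g\neq e$ and $t\in\mathbb{F}_2$, and consider the set $\{x:\rho_g(x)=\sigma_t(x)\}$. For $t=e$ it is empty. For $t\neq e$ it is null: compare the independent Haar-distributed coordinates $x_e$ and $x_{t^{-1}}$, which would have to satisfy $x_e g^{-1}=x_{t^{-1}}$. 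Hence $\alpha_g$ is outer for every $g\neq e$. The paper itself takes this for granted (it writes $\kappa\colon G\hookrightarrow\Out(M)$), so this check is needed in either version.
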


\section{The Rohlin property}

In what follows, let $G$ denote a compact group.

\begin{defn}
    An action or cocycle action $(\alpha,u)\colon G\curvearrowright M$ is said to have the \emph{Rohlin property} if there exists a $G$-equivariant embedding $L^{\infty}(G)\hookrightarrow M_{\omega,\alpha}$.
    Such an action is simply called a \emph{Rohlin action}.
\end{defn}

It follows immediately from the definition that $\alpha\otimes\beta$ has the Rohlin property if either $\alpha$ or $\beta$ does.
Note that the Rohlin property is preserved under conjugation.
Indeed, $M_{\omega,\alpha}=M_{\omega,\alpha'}$ if $(\alpha',u')$ is a unitary perturbation of $(\alpha,u)$.

\begin{rem}
    In \cite{Sh14}, the Rohlin property of a cocycle action $(\alpha,u)$ of a locally compact abelian group $G$ on $M$ is defined by the existence of a unitary $v_p\in\U(M_{\omega,\alpha})$ satisfying $\alpha_g(v_p)=\inn{g,p}v_p$ for all $g\in G$ and $p\in\hat{G}$.
    For compact abelian groups, these two definitions are equivalent.
    % In fact, an action of locally compact abelian group $G$ has the Rohlin property in the sense of \cite{Sh14} if and only if every compact subquotient has the Rohlin property.
\end{rem}

Let $(\alpha,u)\colon G\curvearrowright M$ be a cocycle action on a factor $M$ with the Rohlin property.
We consider $M\otimes L^{\infty}(G)\cong M\vee L^{\infty}(G)\subset M_{\alpha}^{\omega}$ such that $[\alpha_g(x)](h)=\alpha_g(x(g^{-1}h))$ for $x\in M\otimes L^{\infty}(G)$.
This observation plays a central role in the present and the next section.

\subsection{Classification of Rohlin actions}

We will classify Rohlin actions following \cite{MT16}.
Since we deal only with compact groups, approximately 1-cohomology vanishing and 2-cohomol\-ogy vanishing theorems are not needed here.
These results are shown in the next section.

\begin{lem}\label{lem:step1}
    Let $\alpha,\beta\colon G\curvearrowright M$ be actions of a compact group $G$ on a factor $M$.
    Suppose that $\alpha$ has the Rohlin property and $\beta_g\circ\alpha_g^{-1}\in\clInt(M)$ for all $g\in G$.
    Then for any $\varepsilon>0$ and finite subset $\Phi\Subset(M_{\ast})_1$, there exists $\theta\in\Int(M)$ such that
    \[\int_G\|\theta\circ\alpha_g\circ\theta^{-1}\circ\beta_g^{-1}(\varphi)-\varphi\|\,dg<\varepsilon,\quad
    \|\theta(\varphi)-\varphi\|<\int_G\|\beta_g\circ\alpha_g^{-1}(\varphi)-\varphi\|\,dg+\varepsilon\]
    for all $\varphi\in\Phi$.
\end{lem}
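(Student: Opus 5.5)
The plan is to average approximately implementing unitaries over a Rohlin tower, following the strategy of \cite{MT16} for Rohlin flows. Using the Rohlin property of $\alpha$, regard
\[M\otimes L^{\infty}(G)\cong M\vee L^{\infty}(G)\subset M_{\alpha}^{\omega},\]
with $[\alpha_g(x)](h)=\alpha_g(x(g^{-1}h))$. Write $\gamma_g=\beta_g\circ\alpha_g^{-1}\in\clInt(M)$. I would look for a unitary $W\in\U(M\otimes L^{\infty}(G))$ with $\Ad W(h)\approx\gamma_h$, choose a representing sequence $W=(w^{\nu})_{\nu}$, and take $\theta=\Ad w^{\nu}$ for a suitable $\nu$.

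\textbf{Step 1 (construction of $W$).} Enlarge $\Phi$ to the set
\[\Psi=\{\alpha_g\beta_g^{-1}(\varphi),\ \varphi:\ \varphi\in\Phi,\ g\in G\}.\]
Since $g\mapsto\gamma_g$ is continuous, $\Psi$ is norm-compact in $M_{\ast}$; fix a finite $\delta$-net $\Psi_0$ of it. By continuity of $\gamma$ and compactness of $G$, choose a finite Borel partition $G=\bigsqcup_i X_i$ into pieces of small diameter with points $x_i\in X_i$. Approximate innerness of $\gamma_{x_i}$ gives unitaries $w_i$ with $\|\Ad w_i(\psi)-\gamma_{x_i}(\psi)\|<\delta$ for all $\psi\in\Psi_0$. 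Put $W=\sum_i w_i\otimes\chi_{X_i}$. Then $\|\Ad W(h)(\psi)-\gamma_h(\psi)\|$ is small for all $h\in G$ and all $\psi\in\Psi$, uniformly in $h$.

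\textbf{Step 2 (the two estimates).} The second inequality comes directly from Lemma \ref{lem:AdW} with $\psi=\varphi$:
\[\lim_{\nu\to\omega}\|\Ad w^{\nu}(\varphi)-\varphi\|\le\int_G\|\Ad W(h)(\varphi)-\varphi\|\,dh\le\int_G\|\gamma_h(\varphi)-\varphi\|\,dh+\text{small}.\]
For the first inequality, use $\Ad W\circ\alpha_g\circ\Ad W^{\ast}=\Ad(W\alpha_g(W^{\ast}))\circ\alpha_g$, where
\[(W\alpha_g(W^{\ast}))(h)=W(h)\,\alpha_g\bigl(W(g^{-1}h)\bigr)^{\ast}.\]
Pointwise in $h$, its $\Ad$ is approximately
\[\gamma_h\,\alpha_g\,\gamma_{g^{-1}h}^{-1}\,\alpha_g^{-1}=\beta_h\beta_{h^{-1}g}\alpha_g^{-1}=\beta_g\alpha_g^{-1},\]
so $\Ad(W\alpha_g(W^{\ast}))\alpha_g\beta_g^{-1}$ is approximately the identity on $\Phi$. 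Here the approximation of $\gamma_{g^{-1}h}^{-1}$ is needed on the functionals $\alpha_g^{-1}\beta_g^{-1}\ldots$, which is why $\Psi$ was enlarged. Lemma \ref{lem:AdW}, applied in $M^{\omega}$ with the functional $\alpha_g\beta_g^{-1}(\varphi)$ and the unitary $W\alpha_g(W^{\ast})\in M\otimes L^{\infty}(G)$, gives for each $g$
\[\lim_{\nu\to\omega}\|\theta^{\nu}\alpha_g(\theta^{\nu})^{-1}\beta_g^{-1}(\varphi)-\varphi\|\le\text{small},\qquad \theta^{\nu}=\Ad w^{\nu}.\]

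\textbf{Step 3 (choice of $\nu$).} Integrate over $g$ and exchange $\lim_{\nu\to\omega}$ with $\int_G$. The integrands are bounded by $2$. The maps $g\mapsto\alpha_g(w^{\nu})$ form an $\omega$-equicontinuous family on large compact sets, since $W\in M_{\alpha}^{\omega}$. This lets Lemma \ref{lem:limit} apply, after discarding a set of small measure. Then pick $\nu$ in the intersection of the finitely many $\omega$-large sets where both estimates hold within $\varepsilon$, and set $\theta=\Ad w^{\nu}$.

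\textbf{Main obstacle.} I expect the obstacle to be the bookkeeping in Step 2: obtaining the pointwise approximation of $\Ad\bigl(W(h)\,\alpha_g(W(g^{-1}h))^{\ast}\bigr)$ uniformly in both $g$ and $h$, on the right compact family of functionals. A secondary issue is justifying the limit–integral exchange in $g$ in Step 3.
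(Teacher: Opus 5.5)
Your overall strategy is the same as the paper's. You take a unitary $W\in M\otimes L^{\infty}(G)\subset M_{\alpha}^{\omega}$ with $\Ad W(h)\approx\gamma_h=\beta_h\alpha_h^{-1}$ uniformly in $h$, apply Lemma~\ref{lem:AdW} to $W$ and to $W\alpha_g(W^{\ast})$, and exchange $\lim_{\nu\to\omega}$ with $\int_G$ via equicontinuity. Your partition construction of $W$ is a reasonable way to produce the Borel map $v$ whose existence the paper simply asserts. However, the compact family $\Psi$ on which you make $\Ad W(h)$ approximate $\gamma_h$ is too small, and with it the pointwise estimate in Step 2 does not follow.

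Write $k=g^{-1}h$. The integrand is $\|\Ad W(h)\circ\alpha_g\circ\Ad W(k)^{\ast}(\beta_g^{-1}(\varphi))-\varphi\|$, so you need $\Ad W(k)^{\ast}\approx\gamma_k^{-1}$ on $\beta_g^{-1}(\varphi)$. Since both maps are isometries, this is equivalent to $\Ad W(k)\approx\gamma_k$ on
\[
\gamma_k^{-1}\beta_g^{-1}(\varphi)=\alpha_k\beta_{gk}^{-1}(\varphi),
\]
with $k$ and $g$ ranging independently over $G$. Your $\Psi=\{\alpha_m\beta_m^{-1}(\varphi),\varphi\}$ is a one-parameter family that does not contain these functionals in general; it only covers the slice $g=e$, a null set for the $g$-integral. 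Approximating $\gamma_k$ on $\Psi$ gives no control on $\alpha_k\beta_l(\varphi)$ for other $l$. Your remark that the approximation is needed ``on $\alpha_g^{-1}\beta_g^{-1}\ldots$'' misidentifies these functionals.

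The fix is to take the two-parameter family $\Psi=\{\alpha_k\beta_l(\varphi):k,l\in G,\ \varphi\in\Phi\}$. It is norm-compact by continuity of the actions. It also contains $\varphi$ and $\alpha_h\beta_h^{-1}(\varphi)$, which are what you need for the second inequality and for the outer factor $\Ad W(h)$. This is exactly the paper's requirement
\[
\sup_{g,h,k}\|\Ad v_g\circ\alpha_h\circ\beta_k(\varphi)-\beta_g\circ\alpha_{g^{-1}h}\circ\beta_k(\varphi)\|<\varepsilon/2,
\]
and your Step~1 construction works verbatim for this $\Psi$. The triangle inequality then bounds the integrand by two terms, each less than $\varepsilon/2$. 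With that change, your Steps~2 and~3 go through as in the paper.
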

\begin{proof}
    By assumption, there exists a Borel map $v\colon G\to\U(M)$ such that
    \[\sup_{g,h,k\in G}\max_{\varphi\in\Phi}\|\Ad{v_g}\circ\alpha_h\circ\beta_k(\varphi)-\beta_g\circ\alpha_{g^{-1}h}\circ\beta_k(\varphi)\|<\frac{\varepsilon}{2}.\]
    Consider a unitary representing sequence $(w^{\nu})_{\nu}$ for the unitary $W\in M\otimes L^{\infty}(G)\subset M^{\omega}_{\alpha}$ given by $W(h)=v_h$.
    We shall show that $\theta=\Ad{w^{\nu}}$ satisfies the condition for a sufficiently large $\nu$.

    From Lemma \ref{lem:AdW}, it follows that
    \begin{align*}
        \|\Ad{W}\circ\alpha_g\circ\Ad{W^{\ast}}\circ\beta_g^{-1}(\varphi^{\omega})-\varphi^{\omega}\| &= \|\Ad(W\alpha_g(W^{\ast}))\circ\alpha_g\circ\beta_g^{-1}(\varphi^{\omega})-\psi^{\omega}\|\\
        &\le \int_G\|\Ad([W\alpha_g(W^{\ast})](h))\circ\alpha_g\circ\beta_g^{-1}(\varphi)-\varphi\|\,dh\\
        &= \int_G\|\Ad(v_h\alpha_g(v_{g^{-1}h}^{\ast}))\circ\alpha_g\circ\beta_g^{-1}(\varphi)-\varphi\|\,dh,\\
        \|\Ad(v_h\alpha_g(v_{g^{-1}h}^{\ast}))\circ\alpha_g\circ\beta_g^{-1}(\varphi)-\varphi\| &= \|\Ad(v_h)\circ\alpha_g\circ\Ad(v_{g^{-1}h}^{\ast})\circ\beta_g^{-1}(\varphi)-\varphi\|\\
        &\le \|\Ad(v_{g^{-1}h}^{\ast})\circ\beta_g^{-1}(\varphi)-\alpha_{g^{-1}h}\circ\beta_h^{-1}(\varphi)\|\\
        &\mathrel{\phantom{\le}} +\|\Ad(v_h)\circ\alpha_h\circ\beta_h^{-1}(\varphi)-\varphi\|\\
        &< \varepsilon.
    \end{align*}
    Thus we have 
    \begin{align*}
        &\mathrel{\phantom{=}}\lim_{\nu\to\omega}\int_G\|\Ad(w^{\nu})\circ\alpha_g\circ\Ad(w^{\nu})^{-1}\circ\beta_g^{-1}(\varphi)-\varphi\|\,dg\\
        &= \int_G\lim_{\nu\in\omega}\|\Ad(w^{\nu})\circ\alpha_g\circ\Ad(w^{\nu})^{-1}\circ\beta_g^{-1}(\varphi)-\varphi\|\,dg\\
        &= \int_G\|\Ad{W}\circ\alpha_g\circ\Ad{W^{\ast}}\circ\beta_g^{-1}(\varphi)-\varphi\|\,dg<\varepsilon.
    \end{align*}

    In addition, similar calculation shows that
    \begin{align*}
        \lim_{\nu\to\omega}\|\Ad{w^{\nu}}(\varphi)-\varphi\| &= \|\Ad{W}(\varphi^{\omega})-\varphi^{\omega}\|\\
        &\le \int_G\|\Ad{v_h}(\varphi)-\varphi\|\,dh\\
        &\le \sup_{h\in G}\|\Ad{v_h}(\varphi)-\beta_h\circ\alpha_h^{-1}(\varphi)\|+\int_G\|\beta_h\circ\alpha_h^{-1}(\varphi)-\varphi\|\,dh\\
        &= \int_G\|\beta_g\circ\alpha_g^{-1}(\varphi)-\varphi\|\,dg+\varepsilon.
    \end{align*}
\end{proof}

Here is the complete classification result for Rohlin actions.

\begin{thm}\label{thm:classification}
    Let $\alpha,\beta\colon G\curvearrowright M$ be Rohlin actions of a compact group $G$ on a factor $M$.
    If $\beta_g\circ\alpha_g^{-1}\in\clInt(M)$ for all $g\in G$, then they are approximately inner conjugate, i.e., there exists $\sigma\in\clInt(M)$ such that $\sigma\circ\alpha\circ\sigma^{-1}=\beta$.
\end{thm}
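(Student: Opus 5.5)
The plan is an Evans--Kishimoto type intertwining argument built on Lemma \ref{lem:step1}, following the scheme of \cite{MT16}. Fix a dense sequence $(\varphi_k)_k$ in $(M_{\ast})_1$ and a summable sequence $\varepsilon_n>0$. We construct inductively inner automorphisms $\theta_n\in\Int(M)$ and set
\[\sigma_n=\theta_n\circ\cdots\circ\theta_1,\qquad \alpha^{(n)}=\sigma_n\circ\alpha\circ\sigma_n^{-1}.\]
Each $\alpha^{(n)}$ is again a Rohlin action. Indeed, $\Ad(w)\circ\alpha\circ\Ad(w^{\ast})=\Ad(w\alpha_g(w^{\ast}))\circ\alpha_g$ is a unitary perturbation of $\alpha$ by an $\alpha$-cocycle, and $M_{\omega,\alpha}$ is unchanged under perturbation. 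Moreover $\beta_g\circ(\alpha_g^{(n)})^{-1}\in\clInt(M)$ still holds, since $\clInt(M)$ is normal in $\Aut(M)$. So Lemma \ref{lem:step1} may be applied at every stage with $\alpha^{(n)}$ in place of $\alpha$. To keep the induction symmetric, I would alternate the roles of $\alpha$ and $\beta$ as in the usual back-and-forth. Since only one side needs to be conjugated here, a one-sided induction should suffice.

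At stage $n$, apply Lemma \ref{lem:step1} to $\alpha^{(n-1)}$ and $\beta$ with tolerance $\varepsilon_n$. The finite set $\Phi_n$ consists of $\varphi_1,\dots,\varphi_n$ together with $\sigma_{n-1}^{-1}(\varphi_k)$, so that the inverses also converge. This gives $\theta_n$ with
\[\int_G\|\alpha^{(n)}_g\circ\beta_g^{-1}(\varphi)-\varphi\|\,dg<\varepsilon_n,\qquad
\|\theta_n(\varphi)-\varphi\|<\int_G\|\beta_g\circ(\alpha^{(n-1)}_g)^{-1}(\varphi)-\varphi\|\,dg+\varepsilon_n\]
for $\varphi\in\Phi_n$. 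Combined with the previous stage (applied to $\Phi_n\subset\Phi_{n-1}$ after enlarging the sets suitably), the second estimate becomes $\|\theta_n(\varphi)-\varphi\|<\varepsilon_{n-1}+\varepsilon_n$, because $\beta_g\circ(\alpha_g^{(n-1)})^{-1}$ and $\alpha_g^{(n-1)}\circ\beta_g^{-1}$ have the same displacement on $\varphi$ up to replacing $\varphi$ by $\beta_g^{-1}(\varphi)$.

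This step is the main obstacle. The integral bound from the previous stage is taken over $\varphi$, but we need it over $\beta_g^{-1}\varphi$, uniformly in $g$. I would handle it by choosing $\Phi_{n-1}$ to contain a finite $\delta$-net of the compact set $\{\beta_g^{-1}(\varphi_k):g\in G,\ k\le n\}$, which is compact by continuity of $\beta$. One could instead use the inverse of the integrand and the invariance of Haar measure, $\int_G\|\psi-\gamma_g\psi\|\,dg$, which requires the same kind of compactness.

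With $\|\theta_n(\varphi)-\varphi\|$ summable on a dense set, the standard Cauchy argument in the $u$-topology shows that $\sigma_n$ converges to some $\sigma\in\Aut(M)$. We include $\sigma_{n-1}^{-1}(\varphi_k)$ and its images in $\Phi_n$ to control $\sigma_n^{-1}$ as well. Since $\sigma$ is a limit of inner automorphisms, $\sigma\in\clInt(M)$. Finally,
\[\int_G\|\sigma\circ\alpha_g\circ\sigma^{-1}\circ\beta_g^{-1}(\varphi)-\varphi\|\,dg=\lim_n\int_G\|\alpha^{(n)}_g\circ\beta_g^{-1}(\varphi)-\varphi\|\,dg=0\]
by dominated convergence. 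This gives $\sigma\circ\alpha_g\circ\sigma^{-1}=\beta_g$ for almost every $g$, and hence for all $g$ by continuity of both actions.
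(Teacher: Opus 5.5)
Your scheme has a genuine gap at the convergence step. It is caused by replacing the back-and-forth with a one-sided induction.

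With $\sigma_n=\theta_n\circ\cdots\circ\theta_1$ one has $\|\sigma_n^{-1}(\varphi)-\sigma_{n-1}^{-1}(\varphi)\|=\|\theta_n(\varphi)-\varphi\|$. So summability of $\|\theta_n(\varphi_k)-\varphi_k\|$ on a fixed dense set only makes $\sigma_n^{-1}(\varphi)=\varphi\circ\sigma_n$ converge. The increments in the other direction are $\|\theta_n(\sigma_{n-1}(\varphi))-\sigma_{n-1}(\varphi)\|$, so you need $\theta_n$ to be almost trivial on the \emph{moving} set $\sigma_{n-1}(\Phi)$.

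Putting such functionals into $\Phi_n$ does not help. The second estimate of Lemma \ref{lem:step1} bounds $\|\theta_n(\psi)-\psi\|$ by $\int_G\|\alpha^{(n-1)}_g\circ\beta_g^{-1}(\psi)-\psi\|\,dg+\varepsilon_n$. That integral is known to be small only for $\psi\in\Phi_{n-1}$. But $\sigma_{n-1}=\theta_{n-1}\circ\sigma_{n-2}$ is determined only after $\Phi_{n-1}$ has been used to produce $\theta_{n-1}$. Perturbing from $\sigma_{n-2}(\varphi_k)\in\Phi_{n-1}$ does not rescue this: for $a_n=\|\sigma_n(\varphi_k)-\sigma_{n-1}(\varphi_k)\|$ it only gives $a_n<2a_{n-1}+\varepsilon_{n-1}+\varepsilon_n$, which forces no decay.

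Convergence of $\varphi\mapsto\varphi\circ\sigma_n$ alone does not by itself produce a limit in $\Aut(M)$; compare strong limits of unitaries, which may be proper isometries. Your last step $\alpha^{(n)}_g\to\sigma\circ\alpha_g\circ\sigma^{-1}$ needs both $\sigma_n$ and $\sigma_n^{-1}$ to converge.

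This cannot be fixed within a one-sided scheme. Your induction only ever uses the Rohlin property of conjugates of $\alpha$, never that of $\beta$, and the statement is false without it. Take $\alpha$ a minimal action of a nontrivial $G$ on $R$ (Rohlin by Proposition \ref{prop:minRohlin}) and $\beta$ the trivial action. Since $\Aut(R)=\clInt(R)$, the hypothesis holds, yet $\alpha$ is not conjugate to $\beta$.

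The paper avoids this with the genuine Evans--Kishimoto alternation:
\begin{itemize}
\item Set $\gamma^{(-1)}=\alpha$, $\gamma^{(0)}=\beta$, and $\gamma^{(n+1)}=\theta_n\circ\gamma^{(n-1)}\circ\theta_n^{-1}$.
\item Apply Lemma \ref{lem:step1} to the pair $(\gamma^{(n-1)},\gamma^{(n)})$, so both actions must be Rohlin, on the set $\tilde{\Phi}_n=\Phi_n\cup\tilde{\Phi}_{n-1}\cup\tilde{\theta}_{n-1}(\tilde{\Phi}_{n-1})$, where $\tilde{\theta}_n=\theta_n\circ\tilde{\theta}_{n-2}$.
\item Since $\tilde{\theta}_n$ involves only every other $\theta_m$, the moving set $\tilde{\theta}_{n-2}(\tilde{\Phi}_{n-2})$ is already known when $\tilde{\Phi}_{n-1}$ is chosen. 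So the first estimate at step $n-1$ controls the second estimate at step $n$ exactly there.
\item Hence $\tilde{\theta}_{2m}$ and $\tilde{\theta}_{2m+1}$ converge together with their inverses, to $\sigma_0$ and $\sigma_1$, and $\sigma=\sigma_1^{-1}\circ\sigma_0$.
\end{itemize}

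Finally, the step you flagged as the main obstacle is not one. The maps $\beta_g\circ(\alpha^{(n-1)}_g)^{-1}$ and $\alpha^{(n-1)}_g\circ\beta_g^{-1}$ are mutually inverse isometries of $M_{\ast}$, so their displacements at $\varphi$ coincide exactly and no $\delta$-net is needed.
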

\begin{proof}
    The proof is by using Bratteli--Elliott--Evans--Kishimoto intertwining argument in the same way as in \cite{MT16}.
    Let $(\varphi_i)_{i=0}^{\infty}\subset (M_{\ast})_1$ be a dense sequence. 
    Set $\Phi_n=(\varphi_i)_{i=0}^n$, $\gamma^{(-1)}=\alpha$, $\gamma^{(0)}=\beta$, $\tilde{\Phi}_{-1}=\Phi_0$ and $\tilde{\theta}_{-1}=\tilde{\theta}_{-2}=\id$.
    Inductively, for any $n\ge 0$, Lemma \ref{lem:step1} provides $\theta_n\in\Int(M)$ so that
    \[\int_G\|\theta_n\circ\gamma_g^{(n-1)}\circ\theta_n^{-1}\circ(\gamma_g^{(n)})^{-1}(\varphi)-\varphi\|\,dg<2^{-n-1},\]
    \[\|\theta_n(\varphi)-\varphi\|<\int_G\|\gamma_g^{(n)}\circ(\gamma_g^{(n-1)})^{-1}(\varphi)-\varphi\|\,dg+2^{-n}\]
    for all $\varphi\in\tilde{\Phi}_n\coloneqq\Phi_n\cup\tilde{\Phi}_{n-1}\cup\tilde{\theta}_{n-1}(\tilde{\Phi}_{n-1})$.
    Put $\gamma^{(n+1)}=\theta_n\circ\gamma^{(n-1)}\circ\theta_n^{-1}$ and $\tilde{\theta}_n=\theta_n\circ\tilde{\theta}_{n-2}$.
    Note that for any $\varphi\in\tilde{\Phi}_{n-2}$, it follows that
    \[\|\tilde{\theta}_n(\varphi)-\tilde{\theta}_{n-2}(\varphi)\|<\int_G\|\gamma_g^{(n)}\circ(\gamma_g^{(n-1)})^{-1}(\tilde{\theta}_{n-2}(\varphi))-\tilde{\theta}_{n-2}(\varphi)\|\,dg+2^{-n}<2^{-n+1}\]
    and
    \[\|\tilde{\theta}_n^{-1}(\varphi)-\tilde{\theta}_{n-2}^{-1}(\varphi)\|=\|\theta_n(\varphi)-\varphi\|<2^{-n+1}\]
    since both $\varphi$ and $\tilde{\theta}_{n-2}(\varphi)$ belong to $\tilde{\Phi}_{n-1}$.
    Thus there exist $\lim_{n\to\infty}\tilde{\theta}_{2n}=\sigma_0\in\clInt(M)$ and $\lim_{n\to\infty}\tilde{\theta}_{2n+1}=\sigma_1\in\clInt(M)$.
    Now one can check that $\sigma_0\circ\alpha\circ\sigma_0^{-1}=\sigma_1\circ\beta\circ\sigma_1^{-1}$.
    Letting $\sigma=\sigma_1^{-1}\circ\sigma_0$ completes the proof.
\end{proof}

\subsection{Characterization of Rohlin actions}

\begin{prop}\label{prop:minRohlin}
    Let $\alpha\colon G\curvearrowright R$ be an action of a compact group $G$ on the AFD $\II_1$ factor.
    If $\alpha$ is minimal, then $\alpha$ has the Rohlin property.
\end{prop}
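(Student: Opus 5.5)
By Theorem \ref{thm:uniqmin}, every minimal action of $G$ on $R$ is conjugate to any fixed minimal model. The Rohlin property is invariant under conjugation. So it suffices to exhibit \emph{one} minimal action of $G$ on $R$ that has the Rohlin property. The natural candidate is $\gamma\otimes\beta\colon G\curvearrowright R\otimes R\cong R$, where $\gamma$ is any minimal action and $\beta$ is a Rohlin action of $G$ on $R$.

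Two properties of this tensor product need checking.
\begin{itemize}
\item It is Rohlin, because $\alpha\otimes\beta$ is Rohlin whenever one factor is (noted after the definition).
\item It is minimal. It is faithful since $\gamma$ is. For the relative commutant, $(R\otimes R)^{\gamma\otimes\beta}\supset R^{\gamma}\otimes R^{\beta}$, so
\[\bigl((R\otimes R)^{\gamma\otimes\beta}\bigr)'\cap(R\otimes R)\subset\bigl((R^{\gamma})'\cap R\bigr)\otimes\bigl((R^{\beta})'\cap R\bigr)=\mathbb{C}\otimes\bigl((R^{\beta})'\cap R\bigr).\]
It is therefore enough that $\beta$ is itself minimal, or at least that $(R^\beta)'\cap R$ is trivial on the $\gamma\otimes\beta$-invariant part. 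The simplest choice is to take $\beta$ minimal as well, so that the right-hand side is $\mathbb{C}$.
\end{itemize}

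The remaining, and main, step is to produce a minimal Rohlin action $\beta$ of $G$ on $R$. I would realize $L^\infty(G)$ equivariantly inside the central sequence algebra of an infinite tensor product. Fix a faithful projective representation $v\colon G\to P\U(\mathbb{M}_n)$ and let $\gamma=\bigotimes\Ad v$ on $\bigotimes_{k}\mathbb{M}_n$, the Wassermann model. Alternatively, use $\bigotimes_k \Ad\lambda^{(k)}$ on $\bigotimes_k \mathbb{B}(\mathcal{H}_k)$, where $\lambda^{(k)}$ are finite-dimensional truncations of the regular representation. Then construct, for each $\nu$, a $G$-equivariant unital embedding $\iota_\nu\colon L^\infty(G)\to$ the tail tensor factors. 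The target embedding $L^\infty(G)\hookrightarrow R_{\omega,\alpha}$ is the ultralimit of the $\iota_\nu$. Tail-supported sequences are automatically central, so these embeddings are asymptotically central.

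Exact equivariant embeddings of $L^\infty(G)$ into matrix algebras do not exist for infinite $G$. So the actual plan is approximate. Choose finite Borel partitions $G=\bigsqcup_i X_i^{(\nu)}$ of mesh $<1/\nu$. Map $\chi_{X_i^{(\nu)}}$ to projections $p_i^{(\nu)}$ in a tail factor with $\gamma_g(p_i^{(\nu)})\approx p_j^{(\nu)}$ whenever $gX_i\approx X_j$. Such projections can be obtained from the spectral decomposition of $C(G)$ acting by multiplication on $L^2(G)$ and compressing. Then verify $(\gamma,\omega)$-equicontinuity of $(\iota_\nu(f))_\nu$ for continuous $f$; this is where Lemma \ref{lem:limit} and Lemma \ref{lem:Borellift}-type arguments enter.

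The main obstacle is making these approximately equivariant partitions both genuinely equicontinuous in the $\|\cdot\|_2$ sense and exactly equivariant in the ultralimit. If this direct construction is too delicate, I would fall back on the tensor-product trick: a Rohlin action of $G$ on some McDuff algebra of the form $R\otimes L^\infty(G)$-type, for instance via $R\otimes(L^\infty(G)^{\otimes\mathbb{N}}\rtimes\text{shift})$ with the diagonal translation. I would then use Theorem \ref{thm:uniqmin} to identify the result with a minimal action on $R$.
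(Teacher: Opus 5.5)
Your reduction matches the paper's: by Theorem \ref{thm:uniqmin} it suffices to exhibit one minimal action of $G$ on $R$ with the Rohlin property. There is a genuine gap, though: the proposal never produces such an action, and that construction is the whole content of the proof.

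The $\gamma\otimes\beta$ detour is circular. To make it work you require $\beta$ to be a minimal Rohlin action on $R$, which is exactly what has to be constructed. Moreover, any Rohlin action on $R$ is faithful and strictly outer by Proposition \ref{prop:str.outer}(1), hence minimal, so the tensor step buys nothing.

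Your direct attempt on the Wassermann model $\bigotimes\Ad v$ stalls, as you note yourself, because no finite tensor block contains an exact equivariant copy of $L^{\infty}(G)$. The approximate-partition scheme is not carried out. You would have to get genuine projections from compressed spectral data, with the correct trace, exact equivariance in the ultralimit, and $(\alpha,\omega)$-equicontinuity, and none of this is addressed.

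The fallback fails as stated. A unilateral shift on $L^{\infty}(G)^{\otimes\mathbb{N}}$ is not an automorphism. With the bilateral shift on $L^{\infty}(G)^{\otimes\mathbb{Z}}$, the tail copies $\iota^{\nu}(f)$ are never central. Indeed, $\|[\lambda_1,\iota^{\nu}(f)]\|_2=\|\iota^{\nu+1}(f)-\iota^{\nu}(f)\|_2=\sqrt{2}\,\|f-\int_G f\,dg\|_2$ for every $\nu$.

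The missing idea is to index the tensor factors so that the acting group consists of \emph{finite} permutations. The paper does the following:
\begin{itemize}
\item Take $\alpha=\mathcal{L}^{\otimes\infty}$ on $L^{\infty}(G)^{\otimes\infty}$. It commutes with the permutation action $\sigma$ of $\mathfrak{S}_{\infty}$, so it extends to $L^{\infty}(G)^{\otimes\infty}\rtimes_{\sigma}\mathfrak{S}_{\infty}$. This algebra is isomorphic to $R$, because $\sigma$ is free and ergodic and everything is amenable.
\item Each $s\in\mathfrak{S}_{\infty}$ fixes all but finitely many indices. Hence $f\mapsto(\iota^{\nu}(f))_{\nu}$, the embedding into the $\nu$-th leg, lands in $R_{\omega}$ and is trace preserving.
\item The exact identity $\alpha_g\circ\iota^{\nu}=\iota^{\nu}\circ\mathcal{L}_g$ gives $\|\alpha_g(\iota^{\nu}(f))-\alpha_h(\iota^{\nu}(f))\|_2=\|\mathcal{L}_gf-\mathcal{L}_hf\|_2$, independently of $\nu$. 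So equivariance and equicontinuity hold with no approximation at all.
\item Minimality follows from $L(\mathfrak{S}_{\infty})\subset R^{\alpha}$ together with $L(\mathfrak{S}_{\infty})'\cap R=\mathbb{C}$.
\end{itemize}
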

\begin{proof}
    By Theorem \ref{thm:uniqmin}, it suffices to find a minimal action with the Rohlin property.
    Consider $\alpha\coloneqq\mathcal{L}^{\otimes\infty}\colon G\curvearrowright L^{\infty}(G)^{\otimes\infty}$ and $\sigma\colon \mathfrak{S}_{\infty}\curvearrowright L^{\infty}(G)^{\otimes\infty}$, where $\mathfrak{S}_{\infty}$ is the symmetric group of finite permutations and acts on indices.
    These two actions are commuting and hence $\alpha$ extends to $G\curvearrowright (L^{\infty}(G)^{\otimes\infty}\rtimes_{\sigma}\mathfrak{S}_{\infty})$.
    Since $L^{\infty}(G)^{\otimes\infty}$ and $\mathfrak{S}_{\infty}$ are both amenable, so is the crossed product $L^{\infty}(G)^{\otimes\infty}\rtimes \mathfrak{S}_{\infty}$.
    In addition, the action $\sigma$ is free and ergodic, thus we have $L^{\infty}(G)^{\otimes\infty}\rtimes \mathfrak{S}_{\infty}\cong R$.

    The action $\alpha$ has the Rohlin property:
    Let $\iota^n\colon L^{\infty}(G)\hookrightarrow (L^{\infty}(G)^{\otimes\infty}\rtimes \mathfrak{S}_{\infty})$ be an embedding into the $n$-th tensor component of $L^{\infty}(G)$.
    Then the map $L^{\infty}(G)\ni f\mapsto (\iota^{\nu}(f))_{\nu}\in R_{\omega}$ defines an inclusion into the central sequence algebra.
    Since $\alpha_g\circ\iota^{\nu}=\iota^{\nu}\circ\mathcal{L}_g$, the sequence of maps $g\mapsto \alpha_g(\iota^{\nu}(f))$ is equicontinuous.
    Hence, $L^{\infty}(G)$ is equivariantly embedded into $R_{\omega,\alpha}$.

    Minimality of the action $\alpha$:
    Since $R^{\alpha}$ contains the subalgebra $L(\mathfrak{S}_{\infty})$, we have $(R^{\alpha})'\cap R\subset L(\mathfrak{S}_{\infty})'\cap (L^{\infty}(G)^{\otimes\infty}\rtimes \mathfrak{S}_{\infty})=\mathbb{C}$.
    This completes the proof.
\end{proof}

\begin{rem}
    It is possible to prove this proposition without using the uniqueness of minimal actions.
    Here is its sketch:
    the minimal action $\alpha$ is dual, namely, $R=R^G\rtimes_{\beta}{\hat{G}}$ for some coaction $\beta\colon\hat{G}\curvearrowright R^G$ and $\alpha=\hat{\beta}$ (\cite[Corollary 7.4]{MT07}).
    Since $R^G$ is also an AFD $\II_1$ factor, we have that $\beta$ is approximately representable, which means that $\alpha$ has the Rohlin property.
    See also \cite[Section 3.4]{MT10}.
\end{rem}

\begin{prop}\label{prop:str.outer}
    Let $\alpha\colon G\curvearrowright M$ be an action of a compact group $G$ on a von Neumann algebra $M$ (not necessarily a factor).
    Suppose that $\alpha$ has the Rohlin property.
    Then the following statements hold.
    \begin{enumerate}
        \item The action $\alpha$ is strictly outer.
        \item Its canonical extension $\tilde{\alpha}$ has the Rohlin property.
    \end{enumerate}
\end{prop}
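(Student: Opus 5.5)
For (1), take $x\in M'\cap(M\rtimes_\alpha G)$ and show $x\in\Z(M)$. I would work with the Fourier-type expansion of elements of the crossed product. Since $G$ is compact, $M\rtimes_\alpha G$ carries the dual coaction, and $x$ is determined by its spectral components: for each $\pi\in\hat G$ one gets a matrix $x^\pi=(x^\pi_{ij})$ over $M$ with $x^\pi_{ij}\lambda(\pi_{ij})$-type expansion. The concrete realization is $\pi_\alpha(M)\vee\lambda(G)''\subset M\otimes\mathbb B(L^2(G))$.

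The commutation $xa=ax$ for all $a\in M$ translates into an intertwining relation $a\,x^\pi=x^\pi(\alpha$-twisted$\,a)$ for each component.

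Now use the Rohlin embedding $L^\infty(G)\hookrightarrow M_{\omega,\alpha}$. Since $\alpha$ is an action on $M_{\omega,\alpha}$ and $M_{\omega,\alpha}\subset M_\alpha^\omega$, the crossed product $M^\omega_\alpha\rtimes_\alpha G$ contains $M\rtimes_\alpha G$, at least in the appropriate ultraproduct sense. A central sequence commutes with $x$ in the limit, because $x$ is a norm-bounded strong limit of finite sums $\sum a_k\lambda(f_k)$ with $a_k\in M$. Apply this to the Rohlin functions $f\in L^\infty(G)\subset M_{\omega,\alpha}$, which satisfy $\lambda_g f\lambda_g^*=\mathcal L_g f$.

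An element commuting with $L^\infty(G)$ under left translation twisted by $\lambda$ must have trivial $\lambda$-part. This is the classical fact that $L^\infty(G)'\cap(L^\infty(G)\rtimes_{\mathcal L}G)=L^\infty(G)$, i.e.\ the translation action is free. Concretely, $L^\infty(G)\rtimes G\cong\mathbb B(L^2(G))$ and the relative commutant of the maximal abelian $L^\infty(G)$ is itself. Hence $x\in (M\otimes L^\infty(G))\cap(M\rtimes G)$ inside the ultraproduct picture. Applying the canonical conditional expectation onto $M$ (averaging over the dual), we get $x\in M$, and then $x\in M'\cap M=\Z(M)$.

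The main obstacle is making this rigorous: one must embed $M\rtimes_\alpha G$ into $(M\rtimes_\alpha G)^\omega$ and check that $M_{\omega,\alpha}$ sits there and still commutes with $M\rtimes_\alpha G$. I would argue that equicontinuity of $g\mapsto\alpha_g(a^\nu)$ lets $\pi_\alpha(a^\nu)$ asymptotically commute with $\lambda(f)$ for $f\in L^1(G)$. This is exactly what $(\alpha,\omega)$-equicontinuity buys, via Lemma~\ref{lem:limit}.

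For (2), take the Rohlin embedding $L^\infty(G)\to M_{\omega,\alpha}$ and compose it with the inclusion $M_\omega\subset\tilde M_\omega$. This inclusion is standard for the continuous core, since central sequences of $M$ asymptotically commute with $\varphi$ and hence with $\lambda^\varphi_t$. On $M$, $\tilde\alpha_g$ coincides with $\alpha_g$, so equivariance is immediate. It remains to check that the image lies in $\tilde M_{\omega,\tilde\alpha}$. Equicontinuity in $\|\cdot\|^\sharp_{\tilde\varphi}$ for a dual-weight state on $\tilde M$ restricted to $M$ is controlled by equicontinuity in $M$, after cutting down by a spectral projection of $\lambda^\varphi$ (a finite-weight corner). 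This uses continuity of $\alpha\mapsto\tilde\alpha$ on a compact set where $\alpha$ is continuous. This step is routine given \cite{MT16}'s analogous lemma for flows.
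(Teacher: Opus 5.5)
Part (2) of your proposal is essentially the paper's argument: compose the Rohlin embedding with $M_\omega\subset\tilde{M}_\omega$ from \cite{MT17}. Equicontinuity transfers trivially, because $\tilde{\alpha}_g=\alpha_g$ on $M$ and you can restrict a faithful normal state of $\tilde{M}$ to $M$, so no spectral cut-down is needed.

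The gap is in part (1). You claim that ``a central sequence commutes with $x$ in the limit, because $x$ is a norm-bounded strong limit of finite sums $\sum a_k\lambda(f_k)$'', and that equicontinuity makes $\pi_\alpha(a^\nu)$ asymptotically commute with $\lambda(f)=\int_G f(g)\lambda_g\,dg$. Both claims are false, and neither uses $x\in M'$. Equicontinuity only controls the modulus of continuity of $g\mapsto\alpha_g(a^\nu)$ uniformly in $\nu$. It says nothing about $\alpha_g(a^\nu)-a^\nu$, which stays large for Rohlin elements; as you note yourself, $\lambda_g f\lambda_g^{\ast}=\mathcal{L}_g f$. For example, take $G=\mathbb{T}$ and the Rohlin unitary $W(h)=h$. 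Then $\lambda(1)W=W\lambda(\bar\chi)$ and $\lambda(1)\lambda(\bar\chi)=0$, so the nonzero projection $\lambda(1)\in M\rtimes_\alpha G$ does not commute with $W$. More generally, since $x\in M'$ is used only at the very end, your steps would prove that every element of $M\rtimes_\alpha G$ lies in $M$.

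The reduction is also circular in content. Inside $(M\vee L^\infty(G))\rtimes_\alpha G$ one has exactly $\hat{\alpha}_\pi=\Ad(W_\pi)\circ(\cdot\otimes 1)$, where $W_\pi(g)=\pi_g$. So ``$x$ commutes with the Rohlin copy of $L^\infty(G)$'' is equivalent to $\hat{\alpha}_\pi(x)=x\otimes 1$ for all $\pi\in\hat{G}$, i.e.\ to $x\in M$. You have restated the conclusion and then justified it incorrectly.

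The missing idea is to use $x\in M'$ exactly, at the level of representing sequences. This is what the paper does. Take unitaries $w^\nu\in M\otimes\mathbb{B}(\mathcal{H}_\pi)$ representing $W_\pi$. The Rohlin relation $(\alpha_h\otimes\id)(W_\pi)=(1\otimes\pi_h^{\ast})W_\pi$, together with centrality, gives $\hat{\alpha}_\pi=\lim_{\nu\to\omega}\Ad(w^\nu)\circ(\cdot\otimes 1)$ on $M\rtimes_\alpha G$. Since each $w^\nu$ commutes with $x\otimes 1$ on the nose, $\hat{\alpha}_\pi(x)=x\otimes 1$, hence $x\in(M\rtimes_\alpha G)^{\hat{\alpha}}=M$ and $x\in\Z(M)$.

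Your route could in principle be rescued the same way, since representatives $a^\nu\in M$ of Rohlin elements commute exactly with $x$. You would then need an embedding of $M^\omega_\alpha\rtimes_\alpha G$ into $(M\rtimes_\alpha G)^\omega$ compatible with both pictures. That embedding is where the real work lies, and your proposal does not address it.

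Finally, the last step should invoke the fixed-point algebra of the dual action rather than a ``canonical conditional expectation onto $M$''. For infinite compact $G$ there is in general no normal conditional expectation $M\rtimes_\alpha G\to M$; for instance, $L^\infty(G)\rtimes_{\mathcal{L}}G\cong\mathbb{B}(L^2(G))$ does not admit one onto its diffuse masa.
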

\begin{proof}
    The proof is similar to \cite[Remark 4.12 and Corollary 4.13]{MT16}.
    
    (1):
    Let $x\in M'\cap (M\rtimes_{\alpha}G)$.
    For any $\pi\in\hat{G}$, $\alpha_{\pi}$ is approximated by $w(\cdot\otimes 1)w^{\ast}$ with some unitary $w\in M\otimes\mathbb{B}(\mathcal{H}_{\pi})$.
    Hence $\alpha_{\pi}(x)=x\otimes 1$, which means $x\in (M\rtimes_{\alpha}G)^{\hat{\alpha}}=M$.

    (2):
    The inclusions $L^{\infty}(G)\subset M_{\omega}\subset\tilde{M}_{\omega}$ are both equivariant.
    For the second inclusion, see \cite[Corollary 1.9]{MT17}.
\end{proof}

\begin{cor}
    Let $\alpha\colon G\curvearrowright R$ be an action of a compact group on the AFD $\II_1$ factor.
    The following conditions are equivalent.
    \begin{enumerate}
        \item $\alpha$ is minimal;
        \item $\alpha$ is strictly outer;
        \item $\alpha$ has the Rohlin property.
    \end{enumerate}
\end{cor}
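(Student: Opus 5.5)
The plan is to prove the implications (1)$\Rightarrow$(3)$\Rightarrow$(2)$\Rightarrow$(1), all of which are essentially already in hand.

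\emph{(1)$\Rightarrow$(3).} This is exactly Proposition \ref{prop:minRohlin}. A minimal action of $G$ on $R$ is conjugate to the model action $\mathcal{L}^{\otimes\infty}$ on $L^{\infty}(G)^{\otimes\infty}\rtimes\mathfrak{S}_{\infty}$ by Theorem \ref{thm:uniqmin}, and that model action has the Rohlin property. The Rohlin property is preserved under conjugation, since conjugation by $\sigma$ carries $R_{\omega,\alpha}$ onto $R_{\omega,\sigma\alpha\sigma^{-1}}$ equivariantly.

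\emph{(3)$\Rightarrow$(2).} This is Proposition \ref{prop:str.outer}(1), applied with $M=R$. That argument shows that $R'\cap(R\rtimes_{\alpha}G)$ is contained in $\Z(R)=\mathbb{C}$.

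\emph{(2)$\Rightarrow$(1).} Here I would invoke Vaes' result \cite{Va01}, already recalled in the paper: for compact group actions on factors, strict outerness is equivalent to minimality. There is one point of care. Minimality as defined in the paper also requires faithfulness, and the paper assumes throughout that actions and kernels are faithful. If needed, faithfulness also follows directly from strict outerness. If $\alpha_g=\id$ for some $g\neq e$, then $\lambda_g$ commutes with $R$. The Fourier-type averages of $\lambda$ over the closed normal subgroup $\ker\alpha$ then give nonscalar elements of $R'\cap(R\rtimes_{\alpha}G)$, namely a copy of $L(\ker\alpha)$, which contradicts (2).

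The only potentially delicate step is (2)$\Rightarrow$(1), where the entire content is imported from \cite{Va01}; the remaining implications are direct citations of the two preceding propositions.
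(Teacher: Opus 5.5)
Your proof is correct and follows the same route as the paper: Proposition \ref{prop:minRohlin} for (1)$\Rightarrow$(3), Proposition \ref{prop:str.outer}(1) to get strict outerness from the Rohlin property, and \cite[Proposition 6.2]{Va01} for the equivalence of (1) and (2). Your extra faithfulness check is fine but optional, since the paper assumes faithfulness throughout; if you keep it, note that $\lambda_g$ itself is already a nonscalar element of $R'\cap(R\rtimes_{\alpha}G)$ when $\alpha_g=\id$ and $g\neq e$, so no averaging over $\ker\alpha$ is needed.
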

\begin{proof}
    (1)$\Longleftrightarrow$(2) is shown in \cite[Proposition 6.2]{Va01}.
    (1)$\Longrightarrow$(3) follows from Proposition \ref{prop:minRohlin}, and (3)$\Longrightarrow$(1) follows from Proposition \ref{prop:str.outer}.
\end{proof}

Combining this and the classification results, we obtain the characterization on McDuff factors.

\begin{cor}
    Let $\beta\colon G\curvearrowright M$ be an action of a compact group $G$ on a McDuff factor $M$.
    Then the following conditions are equivalent.
    \begin{enumerate}
        \item $\beta$ is conjugate to $\beta\otimes\alpha$ for any action $\alpha\colon G\curvearrowright R$;
        \item $\beta$ is conjugate to $\beta\otimes\gamma$ for \emph{the} minimal action $\gamma\colon G\curvearrowright R$;
        \item $\beta$ has the Rohlin property.
    \end{enumerate}
\end{cor}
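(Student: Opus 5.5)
The plan is to prove the cycle of implications (1)$\Rightarrow$(2)$\Rightarrow$(3)$\Rightarrow$(1).

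\emph{(1)$\Rightarrow$(2).} This is immediate, since $\gamma$ is one particular action of $G$ on $R$.

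\emph{(2)$\Rightarrow$(3).} By Proposition \ref{prop:minRohlin}, the minimal action $\gamma$ has the Rohlin property. As remarked after the definition, a tensor product $\beta\otimes\gamma$ is Rohlin as soon as one factor is. Concretely, $L^{\infty}(G)\hookrightarrow R_{\omega,\gamma}\subset (M\otimes R)_{\omega,\beta\otimes\gamma}$ equivariantly. Since the Rohlin property is preserved under conjugation, $\beta$ is Rohlin.

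\emph{(3)$\Rightarrow$(1).} Fix an arbitrary action $\alpha\colon G\curvearrowright R$. The idea is to apply Theorem \ref{thm:classification} to the two actions $\beta$ and $\beta\otimes\alpha$, after identifying $M\otimes R$ with $M$.

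First, $\beta\otimes\alpha$ is Rohlin because $\beta$ is. Since $M$ is McDuff, choose an isomorphism $\pi\colon M\otimes R\to M$. Then $\beta'\coloneqq\pi\circ(\beta\otimes\alpha)\circ\pi^{-1}$ is a Rohlin action on $M$.

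It remains to check the hypothesis $\beta'_g\circ\beta_g^{-1}\in\clInt(M)$. A bare isomorphism $\pi$ does not guarantee this, so $\pi$ must be chosen carefully. I would use the standard McDuff fact that there is an isomorphism $\pi\colon M\otimes R\to M$ such that $\pi\circ(\sigma\otimes\id_R)\circ\pi^{-1}$ is approximately inner relative to $\sigma$, i.e.\ $\pi(x\otimes1)\approx x$. Such a $\pi$ is obtained by absorbing the copy of $R$ into central sequences, so that $\pi\circ\iota$ is approximately inner for the embedding $\iota\colon x\mapsto x\otimes1$. With this choice, $\beta'_g$ agrees with $\beta_g\otimes\alpha_g$ up to $\clInt$.

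Next, $\alpha_g\in\Aut(R)=\clInt(R)$, because every automorphism of the AFD $\II_1$ factor is approximately inner (Connes). Hence $\id\otimes\alpha_g\in\clInt(M\otimes R)$, and so
\[\beta'_g\circ\beta_g^{-1}\in\clInt(M)\quad\text{for all }g\in G.\]

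Theorem \ref{thm:classification} now gives $\sigma\in\clInt(M)$ with $\sigma\circ\beta\circ\sigma^{-1}=\beta'$. Therefore $\beta$ is conjugate to $\beta\otimes\alpha$, which is (1).

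\emph{Main obstacle.} The delicate step is justifying that $\pi$ can be chosen with $\pi\circ(\theta\otimes\id)\circ\pi^{-1}\circ\theta^{-1}\in\clInt(M)$ simultaneously for all $\theta\in\Aut(M)$. I would derive this from the McDuff property via Connes' characterization, in the form that $\pi^{-1}\circ\iota$ is approximately unitarily equivalent to the identity, and which extends to non-tracial $M$ via the central sequence approach of \cite{MT16}. Once that holds for $\pi^{-1}\circ\iota$, conjugating $\theta$ by it stays within $\theta\,\clInt(M)$, since $\clInt(M)$ is normal in $\Aut(M)$.
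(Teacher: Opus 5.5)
Your proof follows the paper's proof in outline. (1)$\Rightarrow$(2)$\Rightarrow$(3) goes exactly as you say. For (3)$\Rightarrow$(1) the paper also transports $\beta\otimes\alpha$ to $M$ through a McDuff isomorphism, shows that the result differs from $\beta$ by approximately inner automorphisms, and then applies Theorem~\ref{thm:classification}.

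The one step that does not yet hold up is the one you flag yourself. Your justification, ``conjugating $\theta$ by $\pi\circ\iota$ stays in $\theta\,\clInt(M)$ by normality,'' is not an argument. The map $\pi\circ\iota$ is a non-surjective endomorphism of $M$, and $\pi^{-1}\circ\iota$ does not even compose. So normality of $\clInt(M)$ in $\Aut(M)$ says nothing about it.

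The step is closed by two ingredients:
\begin{enumerate}
\item the McDuff fact in predual form: an isomorphism $\Theta\colon M\to M\otimes R$ and unitaries $w_n\in\U(M\otimes R)$ with $\|\Ad{w_n}\circ\Theta(\varphi)-\varphi\otimes\tau\|\to0$ for all $\varphi\in M_\ast$;
\item the explicit unitaries $\tilde{w}_n=(\theta\otimes\id)(w_n^{\ast})w_n$.
\end{enumerate}
Since $\Ad{\tilde{w}_n}=(\theta\otimes\id)\circ\Ad{w_n^{\ast}}\circ(\theta\otimes\id)^{-1}\circ\Ad{w_n}$ and $(\theta\otimes\id)(\varphi\otimes\tau)=\theta(\varphi)\otimes\tau$, one gets
\[\|\Ad{\tilde{w}_n}\circ\Theta\circ\theta(\varphi)-(\theta\otimes\id)\circ\Theta(\varphi)\|\le\|\Ad{w_n}\circ\Theta(\theta(\varphi))-\theta(\varphi)\otimes\tau\|+\|\varphi\otimes\tau-\Ad{w_n}\circ\Theta(\varphi)\|\to0.\]
Hence $\Theta^{-1}\circ(\theta\otimes\id)\circ\Theta\circ\theta^{-1}=\lim_n\Ad(\Theta^{-1}(\tilde{w}_n))\in\clInt(M)$, which is your claim with $\pi=\Theta^{-1}$. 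For non-tracial $M$ you should use the McDuff property in this predual-norm form, since that is what the estimate needs.

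This is exactly the paper's computation, except that the paper runs it once with $\beta_g\otimes\alpha_g$ in place of $\theta\otimes\id$. It uses only that $\alpha_g$ preserves $\tau$, so $(\beta_g\otimes\alpha_g)(\varphi\otimes\tau)=\beta_g(\varphi)\otimes\tau$. Your separate appeal to Connes' theorem $\Aut(R)=\clInt(R)$ to absorb $\id\otimes\alpha_g$ is correct but unnecessary. With this computation inserted, your proof is complete.
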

\begin{proof}
    (1)$\Longrightarrow$(2)$\Longrightarrow$(3) is easy.

    (3)$\Longrightarrow$(1):
    Since $M$ is McDuff, there exist an isomorphism $\Theta\colon M\to M\otimes R$ and a sequence of unitaries $w_n\in\U(M\otimes R)$ such that $\|\Ad{w_n}\circ\Theta(\varphi)-\varphi\otimes\tau\|$ tends to zero for all $\varphi\in M_{\ast}$.
    Fix $g\in G$ and we simply write $\alpha=\alpha_g$ and $\beta=\beta_g$.
    Then it suffices to show that $\beta^{-1}\circ\Theta^{-1}\circ(\beta\otimes\alpha)\circ\Theta\in\clInt(M)$.

    Set $\tilde{w}_n=(\beta\otimes\alpha)(w_n^{\ast})w_n$.
    Then we have
    \begin{align*}
        \|\Ad{\tilde{w}_n}\circ\Theta\circ\beta(\varphi)-(\beta\otimes\alpha)\circ\Theta(\varphi)\| &= \|\Ad{w_n}\circ\Theta\circ\beta(\varphi)-(\beta\otimes\alpha)\circ\Ad{w_n}\circ\Theta(\varphi)\|\\
        &\le \|\Ad{w_n}\circ\Theta(\beta(\varphi))-\beta(\varphi)\otimes\tau\|\\
        &\mathrel{\phantom{\le}} +\|(\beta\otimes\alpha)(\varphi\otimes\tau-\Ad{w_n}\circ\Theta(\varphi))\|\\
        &\to 0
    \end{align*}
    for every $\varphi\in M_{\ast}$.
    Therefore $\beta^{-1}\circ\Theta^{-1}\circ(\beta\otimes\alpha)\circ\Theta=\lim_{n\to\infty}\Ad{\tilde{w}_n}$ is approximately inner.
    Theorem \ref{thm:classification} completes the proof.
\end{proof}

Next we treat actions on AFD factors of type $\III$.
Proposition \ref{prop:str.outer} implies that $\tilde{\alpha}$ is strictly outer if $\alpha$ has the Rohlin property.
We want to consider the converse of this statement (see \cite[Conjecture 8.3]{MT16}).

\begin{prop}
    Let $\alpha\colon G\curvearrowright M$ be an action of a compact group $G$ on an AFD factor $M$ of type $\III$.
    \begin{enumerate}
        \item If the Connes--Takesaki module $\mod(\alpha)$ is trivial and its canonical extension $\tilde{\alpha}$ is strictly outer, then $\alpha$ has the Rohlin property and also is conjugate to $\id\otimes\gamma$, where $\gamma$ is the minimal action on $R_0$.
        \item If $\mod(\alpha)$ is faithful, then $\alpha$ has the Rohlin property.
    \end{enumerate}
\end{prop}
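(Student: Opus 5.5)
For (1), I would reduce to the classification of minimal actions on AFD factors. Let $\mod(\alpha)$ be trivial and $\tilde{\alpha}$ strictly outer. The first step is to pass to a type $\II$ picture. Since $\mod(\alpha_g)=\id$, each $\alpha_g$ fixes the flow of weights. By the Connes--Takesaki relative commutant/Haagerup--St\o rmer type results for AFD factors, $\alpha_g\in\clInt(M)$ for every $g$. I would then use the continuous core $\tilde{M}=M\rtimes_{\sigma^\varphi}\mathbb{R}$. Choose $\varphi$ so that $\alpha$ can be perturbed by an $\alpha$-cocycle to fix $\varphi$; this is possible because $G$ is compact, by averaging a dominant weight or using an invariant state. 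Then $\tilde{\alpha}$ acts on $\tilde{M}$ commuting with the trace-scaling flow $\theta$ and fixing $\Z(\tilde{M})$ pointwise.

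Disintegrate $\tilde{M}$ over $\Z(\tilde{M})$ into AFD $\II_\infty$ (or $\II_1$) factors. Strict outerness of $\tilde{\alpha}$ should pass to almost every fibre, since the relative commutant decomposes. Each fibre action is therefore minimal, and by Theorem \ref{thm:uniqmin} it is conjugate to $\id\otimes\gamma$. A measurable selection of these conjugacies, together with the Masuda--Tomatsu equivariant classification (their results for compact Kac algebras cover $\theta$-equivariant versions), gives that $\alpha\otimes\id_{R_0}$ and $\id\otimes\gamma$ on $M\otimes R_0\cong M$ are cocycle conjugate. Compact group cocycles are trivialised using the averaging trick, namely the Shapiro lemma style argument on $M_{\alpha}^{\omega}$. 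Since $\id\otimes\gamma$ is Rohlin by Proposition \ref{prop:minRohlin} and the tensor remark, and the Rohlin property is preserved under conjugacy, $\alpha$ is Rohlin. For the direct conjugacy to $\id\otimes\gamma$, I would instead argue: $\alpha$ is Rohlin and $\id\otimes\gamma$ is Rohlin, and $\alpha_g\circ(\id\otimes\gamma_g)^{-1}\in\clInt(M)$ because both have trivial module. Theorem \ref{thm:classification} then gives conjugacy.

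For (2), faithful $\mod(\alpha)$ means $G$ acts faithfully on the flow space $X$ of $\Z(\tilde{M})$, commuting with the flow. The goal is to produce $L^\infty(G)\subset M_{\omega,\alpha}$ equivariantly. I would realise a $G$-equivariant copy of $L^\infty(G)$ inside $\Z(\tilde{M})$, which is possible if the action of the compact group on the ergodic flow space is free. Faithful actions of compact groups commuting with an ergodic flow act via a closed subgroup of the centralizer, and ergodicity should force freeness. Then I would transport to central sequences of $M$ via the standard trick that $\Z(\tilde{M})$-elements fixed appropriately give central sequences through Connes--St\o rmer/Haagerup approximations, choosing $\omega$-equicontinuous representatives by Lemma \ref{lem:Borellift}.

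The main obstacle is the step in (1) passing from fibrewise minimality to a global conjugacy in the type $\III$ factor. This needs an equivariant (flow-compatible) version of Theorem \ref{thm:uniqmin}. My first attempt would be to use the $\hat{G}$-coaction and the approximate representability argument mentioned in the remark after Proposition \ref{prop:minRohlin}, applied to $M^{\alpha}$, which is again AFD.
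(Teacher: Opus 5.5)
Both parts have genuine gaps, and they sit exactly at the steps you treat as routine.

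\textbf{Part (1).} Fibrewise minimality of $\tilde{\alpha}$ on the core does not globalize by ``measurable selection''. The fibrewise conjugacies with $\id\otimes\gamma$ have no reason to intertwine the trace-scaling flow $\theta$, which moves the fibres of $\Z(\tilde{M})$. Without a $\theta$-compatible conjugacy you cannot descend from $\tilde{M}$ to $M$. There is no $\theta$-equivariant version of Theorem \ref{thm:uniqmin} to cite; producing one is the whole problem. Moreover, you would end up with a statement about $\alpha\otimes\id_{R_0}$ rather than $\alpha$.

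The paper never disintegrates. By \cite[Theorem 2.4]{MT10} it suffices that the dual action $\hat{\alpha}$ of $\hat{G}$ on $M\rtimes_{\alpha}G$ be approximately inner and centrally free.
\begin{itemize}
\item Approximate innerness. Strict outerness of $\tilde{\alpha}$ gives $\Z(\widetilde{M\rtimes_{\alpha}G})=\Z(\tilde{M}\rtimes_{\tilde{\alpha}}G)\subset\Z(\tilde{M})$. Hence $M\rtimes_{\alpha}G$ is a factor, and each $\widetilde{\hat{\alpha}}_{\pi}$, which fixes $\tilde{M}$, has trivial module. So $\hat{\alpha}_{\pi}$ is approximately inner by \cite[Lemma 7.5, Theorem 7.6]{MT10}.
\item Central freeness. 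If $\hat{\alpha}_{\pi}$ were centrally trivial, then $\widetilde{\hat{\alpha}}_{\pi}=\Ad W$ with $W\in\Z(\tilde{M})\otimes\mathbb{B}(\mathcal{H}_{\pi})$, by strict outerness of $\tilde{\alpha}$. Since $\mod(\alpha_g)=\id$, $W$ is fixed by $\tilde{\alpha}_g\otimes\id$, so $W(\lambda_g\otimes 1)W^{\ast}=\lambda_g\otimes 1\neq\lambda_g\otimes\pi_g$, a contradiction.
\end{itemize}
Your fallback via approximate representability of the coaction on $M^{\alpha}$ points in this direction. But it is precisely this verification, where both hypotheses are actually used, that is missing. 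Your last step (Rohlin for $\alpha$ and $\id\otimes\gamma$, with $\alpha_g\circ(\id\otimes\gamma_g)^{-1}\in\clInt(M)$, then Theorem \ref{thm:classification}) is sound. It only works once the Rohlin property of $\alpha$ has been obtained independently, and in your plan that property was to come from the unproved conjugacy.

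\textbf{Part (2).} Two of your steps are fine:
\begin{itemize}
\item Freeness of the $G$-action on the flow of weights: stabilizers are flow-invariant, hence a.e.\ constant by ergodicity, hence trivial by faithfulness.
\item The equivariant embedding $L^{\infty}(G)\hookrightarrow\Z(\tilde{M})$.
\end{itemize}
The ``transport'' to $M_{\omega,\alpha}$, however, has no mechanism behind it. $\Z(\tilde{M})$ lives in the core, not in $M^{\omega}$, and the available inclusion $M_{\omega}\subset\tilde{M}_{\omega}$ goes the wrong way. At best, a faithful module tells you that each $\alpha_g$, $g\neq e$, acts nontrivially on $M_{\omega}$. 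Turning that into an equivariant copy of $L^{\infty}(G)$ in $M_{\omega,\alpha}$ for a non-discrete compact group is a Rohlin-type theorem you have not supplied.

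The paper instead uses uniqueness. By \cite[Corollary 5.10]{Iz03}, actions with faithful module are determined up to conjugacy by their module. So $\alpha$ is conjugate to $\alpha\otimes\gamma$ on $M\otimes R_0\cong M$, which is Rohlin because $\gamma$ is (Proposition \ref{prop:minRohlin}).
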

\begin{proof}
    (1):
    According to \cite[Theorem 2.4]{MT10}, it suffices to show that the dual action $\hat{\alpha}\colon\hat{G}\curvearrowright M\rtimes_{\alpha}G$ is approximately inner and centrally free.
    Note that $M\rtimes_{\alpha}G$ is a factor.
    Indeed, we have
    \[
    \Z(M\rtimes_{\alpha}G)\subset\Z(\tilde{M\rtimes_{\alpha}G}) =\Z(\tilde{M}\rtimes_{\tilde{\alpha}}G)\subset\Z(\tilde{M})
    \]
    and $\tilde{M}\cap(M\rtimes_{\alpha}G)=M$ since every element in $\tilde{M}\cap (M\rtimes_{\alpha}G)$ is fixed under $\tilde{\hat{\alpha}}=\hat{\tilde{\alpha}}$ and the trace-scaling flow $\theta$.

    For any $\pi\in\hat{G}$, $\tilde{\hat{\alpha}}_{\pi}$ is trivial on $\Z(\tilde{M\rtimes_{\alpha}G})\subset\Z(\tilde{M})$.
    Thus, $\tilde{\hat{\alpha}}_{\pi}$ has the trivial Connes--Takesaki module and $\hat{\alpha}_{\pi}$ is approximately inner by \cite[Lemma 7.5, Theorem 7.6(1)]{MT10}.
    
    Suppose $\hat{\alpha}_{\pi}$ is centrally trivial.
    From \cite[Theorem 7.6(2)]{MT10}, it follows that $\tilde{\hat{\alpha}}_{\pi}$ is inner, that is, there exists a unitary $W\in(\tilde{M\rtimes_{\alpha}G})\otimes\mathbb{B}(\mathcal{H}_{\pi})$ such that $\tilde{\hat{\alpha}}_{\pi}=W(\cdot\otimes 1)W^{\ast}$.
    Since $\tilde{\hat{\alpha}}_{\pi}(x)=x\otimes 1$ for all $x\in \tilde{M}$, the unitary $W$ belongs to $(\tilde{M}'\cap(\tilde{M}\rtimes_{\tilde{\alpha}}G))\otimes\mathbb{B}(\mathcal{H}_{\pi})=\Z(\tilde{M})\otimes\mathbb{B}(\mathcal{H}_{\pi})$ by the strict outerness of $\tilde{\alpha}$.
    But $\tilde{\hat{\alpha}}_{\pi}(\lambda_g)=\lambda_g\otimes\pi_g\neq W(\lambda_g\otimes 1)W^{\ast}$ because $W$ is fixed under $\tilde{\alpha}_g\otimes 1$ if $\mod(\alpha_g)=1$.
    This is a contradiction, and hence we have $\hat{\alpha}_{\pi}$ is centrally non-trivial.

    (2):
    By \cite[Corollary 5.10]{Iz03}, such an action $\alpha$ is unique up to conjugacy.
    Hence $\alpha$ and $\alpha\otimes\gamma$ are conjugate and they have the Rohlin property.
\end{proof}

% \begin{thm}
%     Let $\alpha\colon G\curvearrowright M$ be an action of a compact group $G$ on an AFD factor $M$.
%     Then the following conditions are equivalent:
%     \begin{enumerate}
%         \item $\tilde{\alpha}$ is strictly outer;
%         \item $\tilde{\alpha}|_H$ is strictly outer, where $H=\ker(\mod(\alpha))$;
%         \item $\alpha$ has the Rohlin property.
%     \end{enumerate}
% \end{thm}

\subsection{Examples and applications}

% \begin{exam}[non McDuff example]
%     Similar to \cite{Sh14}, there is an example of Rohlin action on a non-McDuff factor $G\curvearrowright L^{\infty}(G)^{\otimes\infty}\rtimes(S_{\infty}\ast S_{\infty})$.
% \end{exam}

\begin{exam}
    Let $\varphi$ be a faithful normal state on the AFD $\III_1$ factor $R_{\infty}$.
    Restrict the modular action $\sigma^{\varphi}$ to a countable subgroup $\Gamma<\mathbb{R}$, and we have a crossed product $M=R_{\infty}\rtimes_{\sigma^{\varphi}}\Gamma$.
    The dual action $\hat{\Gamma}\curvearrowright M$ has the Rohlin property since it has a faithful Connes--Takesaki module.
\end{exam}

\begin{exam}[non-example]
    Let $M$ be a factor of type $\III$, and $\varphi$ an almost periodic state, namely, $G\coloneqq\overline{\sigma^{\varphi}(\mathbb{R})}\subset\Aut(M)$ is compact.
    Then $G\curvearrowright M$ does not have the Rohlin property.
\end{exam}
\begin{proof}
    Any modular actions are centrally trivial.
\end{proof}

Let $N\subset M$ be an irreducible inclusion of $\II_1$ factors.
For $t>0$, we define the amplified inclusion by $N^t=p(N\otimes\mathbb{M}_n)p\subset p(M\otimes\mathbb{M}_n)p=M^t$, where $n\ge t$ is an integer and $p\in N\otimes\mathbb{M}_n$ is a projection such that $(\tau\otimes\Tr)(p)=t$.
The isomorphism class of the inclusion $N^t\subset M^t$ does not depend on the choice of $n$ and $p$.
The \emph{relative fundamental group} is defined by $\mathcal{F}(N\subset M)=\{t\in\mathbb{R}_+^{\ast}:(N\subset M)\cong (N^t\subset M^t)\}$.

If $N=M^{\beta}$ for some minimal action $\beta\colon G\curvearrowright M$, then we also consider the reduced action $\beta^t\colon G\curvearrowright M^t$, that is, $\beta_g^t(x)=(\beta_g\otimes\id)(x)$ for $x\in M^t=p(M\otimes\mathbb{M}_n)p$.
This depends only on $t>0$ up to conjugacy.

\begin{cor}
    Let $\beta\colon G\curvearrowright M$ be a Rohlin action of a compact group on a McDuff factor of type $\II_1$.
    Then $\mathcal{F}(M^{\beta}\subset M)=\mathbb{R}_+^{\ast}$.
    Moreover, for any $t>0$ there exists an isomorphism $\sigma\colon M\to M^t$ such that $\sigma\circ\beta^t\circ\sigma^{-1}=\beta$.
\end{cor}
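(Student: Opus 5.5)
The plan is to reduce everything to the McDuff characterization just proved, and then use the freedom in choosing the tensor splitting to absorb the amplification. Since $M$ is a McDuff factor and $\beta$ has the Rohlin property, the preceding corollary ((3)$\Rightarrow$(1)) gives a conjugacy of $\beta$ with $\beta\otimes\gamma$ on $M\otimes R$, where $\gamma\colon G\curvearrowright R$ is the minimal action. So it suffices to prove both claims for $\beta\otimes\gamma$. Note that $(M\otimes R)^{\beta\otimes\gamma}$ is then carried onto $M^{\beta}$ by this conjugacy, so the relative fundamental group is also a conjugacy invariant.

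Next, realize the amplification on the $R$-factor. For $t>0$ pick $n\ge t$ and a projection $p$ in $(M\otimes R)^{\beta\otimes\gamma}\otimes\mathbb{M}_n$ of trace $t$. Since $\gamma$ is minimal, $R^{\gamma}$ is a $\II_1$ factor, so we may choose $p=1\otimes q$ with $q\in R^{\gamma}\otimes\mathbb{M}_n$. This gives
\[
(\beta\otimes\gamma)^t\cong\beta\otimes\gamma^t\colon G\curvearrowright M\otimes R^t,
\]
and the fixed point inclusion becomes $(M\otimes R)^t$ with fixed points $\bigl((M\otimes R)^{\beta\otimes\gamma}\bigr)^t$. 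The reduced action $\gamma^t$ on $R^t\cong R$ is again minimal. Indeed, it is faithful, and its relative commutant is
\[
((R^t)^{\gamma^t})'\cap R^t=q\bigl((R^{\gamma}\otimes\mathbb{M}_n)'\cap(R\otimes\mathbb{M}_n)\bigr)q=\mathbb{C}q,
\]
because $(R^{\gamma})'\cap R=\mathbb{C}$. By Theorem \ref{thm:uniqmin} there is therefore an isomorphism $\rho\colon R^t\to R$ with $\rho\circ\gamma^t\circ\rho^{-1}=\gamma$.

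Then $\sigma_0=\id\otimes\rho$ conjugates $\beta\otimes\gamma^t$ to $\beta\otimes\gamma$. Composing with the McDuff conjugacy $M\cong M\otimes R$ on both sides gives an isomorphism $\sigma\colon M\to M^t$ with $\sigma\circ\beta^t\circ\sigma^{-1}=\beta$, after adjusting for the direction of the map as needed. An equivariant isomorphism maps the fixed point algebra onto the fixed point algebra, so $(M^{\beta}\subset M)\cong((M^{\beta})^t\subset M^t)$. Here $(M^{\beta})^t=(M^t)^{\beta^t}$ once $p$ is chosen in $M^{\beta}\otimes\mathbb{M}_n$. This shows $t\in\mathcal{F}(M^{\beta}\subset M)$ for every $t>0$.

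The main point to check carefully is the bookkeeping. Specifically, one must verify that the amplification of $M$ by a projection in $M^{\beta}\otimes\mathbb{M}_n$ is compatible, under the McDuff conjugacy, with the amplification via $1\otimes q$. This holds because any two projections of equal trace in the $\II_1$ factor $(M^{\beta}\otimes\mathbb{M}_n)$, transported through the conjugacy, are equivalent by a partial isometry that is fixed by the action. Hence the resulting reduced actions are conjugate, and the well-definedness of $\beta^t$ up to conjugacy, noted before the statement, absorbs this step. We also need $M^{\beta}$ to be a factor, which holds since Rohlin actions are strictly outer (Proposition \ref{prop:str.outer}) and hence minimal.
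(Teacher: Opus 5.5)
Your proposal is correct and follows the paper's own argument: replace $\beta$ by $\beta\otimes\gamma$ via the McDuff characterization, use the uniqueness of minimal actions (Theorem \ref{thm:uniqmin}) to get $\gamma^t\sim\gamma$, and conjugate by $\id\otimes\rho$. You also supply details the paper leaves implicit: the minimality of $\gamma^t$, the choice $p=1\otimes q$, the compatibility of the amplification with the McDuff conjugacy via a fixed partial isometry, and the correct direction $R^t\to R$ of the intertwining isomorphism.
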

\begin{proof}
    Let $\gamma\colon G\curvearrowright R$ be the minimal action.
    By the uniqueness theorem, for any $t>0$ there exists an isomorphism $\sigma\colon R\to R^t$ such that $\sigma\circ\gamma^t\circ\sigma^{-1}=\gamma$.
    In general, consider $\beta\otimes\gamma$ instead of $\beta$, and $\id\otimes\sigma$ intertwines so that $(\beta\otimes\gamma)^t=\beta\otimes\gamma^t\sim\beta\otimes\gamma$.
\end{proof}

\begin{cor}
    Let $\alpha$ and $\beta$ be Rohlin actions of a compact group $G$ on an AFD factor $M$.
    Then $\alpha$ and $\beta$ are conjugate if and only if $\mod(\alpha)=\mod(\beta)$.
\end{cor}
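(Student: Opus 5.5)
The plan is to reduce to Theorem \ref{thm:classification}. For one direction, suppose $\sigma\circ\alpha\circ\sigma^{-1}=\beta$ with $\sigma\in\Aut(M)$. Since $\mod$ is a homomorphism, $\mod(\beta_g)=\mod(\sigma)\mod(\alpha_g)\mod(\sigma)^{-1}$. So the equality $\mod(\alpha)=\mod(\beta)$ does not follow literally; it only holds up to conjugation by $\mod(\sigma)$. I will therefore read the statement as ``conjugate if and only if $\mod(\alpha_g)=\mod(\beta_g)$ for all $g$, up to conjugation in the flow of weights'' for the forward direction. For the converse I will use exactly the hypothesis $\mod(\alpha_g)=\mod(\beta_g)$ for every $g\in G$.

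For the converse, the key input is the characterization of approximately inner automorphisms of AFD factors. By Connes (type $\II$) and by Kawahigashi--Sutherland--Takesaki (type $\III$), for an AFD factor $M$ an automorphism $\theta$ lies in $\clInt(M)$ if and only if $\mod(\theta)$ is trivial. In type $\II$ every automorphism of $R_0$ is approximately inner, and for $R_{0,1}$ one restricts to trace-scaling-preserving automorphisms, which is what $\mod$ detects. Applying this to $\theta=\beta_g\circ\alpha_g^{-1}$ gives $\mod(\theta)=\mod(\beta_g)\mod(\alpha_g)^{-1}=\id$, so $\beta_g\circ\alpha_g^{-1}\in\clInt(M)$ for every $g\in G$.

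Since both $\alpha$ and $\beta$ are Rohlin actions, Theorem \ref{thm:classification} now gives $\sigma\in\clInt(M)$ with $\sigma\circ\alpha\circ\sigma^{-1}=\beta$, which is the desired conjugacy.

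The main obstacle is the type-dependent fact $\clInt(M)=\ker\mod$ in the $\II_\infty$ and type $\III$ cases, which is external to the paper. I will invoke it as a known result rather than reprove it; it is exactly what \cite{FT01} and \cite{MT10} assume for AFD factors.
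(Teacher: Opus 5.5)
Your argument is the paper's: it cites \cite[Theorem 1(i)]{KST92}, which is the right reference for $\clInt(M)=\ker\mod$ on AFD factors rather than \cite{FT01}, to get $\beta_g\circ\alpha_g^{-1}\in\clInt(M)$ for all $g\in G$, and then applies Theorem \ref{thm:classification}. Your caveat about the forward direction is fair, since the paper only argues the ``if'' part, and it disappears if ``conjugate'' is read as approximately inner conjugate, which is what Theorem \ref{thm:classification} produces: $\mod$ is continuous and trivial on $\Int(M)$, hence trivial on $\clInt(M)$.
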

\begin{proof}
    By \cite[Theorem 1(i)]{KST92}, the condition $\mod(\alpha)=\mod(\beta)$ implies that $\beta_g\circ\alpha_g^{-1}\in\clInt(M)$ for all $g\in G$.
    Thus, Theorem \ref{thm:classification} completes the proof.
\end{proof}

\section{Cohomology vanishing}

\subsection{1-cohomology vanishing}

There are several known results on the vanishing of 1-coho\-mology.
For instance, Connes' $2\times 2$ matrix trick is effective for minimal actions of compact groups on finite von Neumann algebras.
See \cite[Proposition 5.2]{Iz03} for a general treatment.
For Rohlin flows and actions, the approximate vanishing of 1-cohomology is also known.
In contrast, for compact group actions, we establish the strict vanishing of 1-cohomology for Rohlin actions.

\begin{thm}
    Let $G$ be a compact group and $\alpha\colon G\curvearrowright M$ be an action on a factor $M$.
    If $\alpha$ has the Rohlin property, then every $\alpha$-cocycle is a coboundary.
\end{thm}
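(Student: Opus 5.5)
The plan is to first solve the cocycle equation exactly inside the ultrapower, using the Rohlin embedding $L^{\infty}(G)\hookrightarrow M_{\omega,\alpha}$, and then to descend to $M$ by an intertwining argument.

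Let $v\colon G\to\U(M)$ be an $\alpha$-cocycle. By the observation made after the definition of the Rohlin property, we have the embedding $M\otimes L^{\infty}(G)\cong M\vee L^{\infty}(G)\subset M_{\alpha}^{\omega}$, on which $\alpha$ acts by $[\alpha_g(x)](h)=\alpha_g(x(g^{-1}h))$. Define $W\in\U(M\otimes L^{\infty}(G))\subset\U(M_{\alpha}^{\omega})$ by $W(h)=v_h^{\ast}$. The cocycle identity $v_{g}\alpha_g(v_{g^{-1}h})=v_h$ gives
\[
[v_g\alpha_g(W)](h)=v_g\alpha_g(v_{g^{-1}h}^{\ast})=v_g\alpha_g(v_{g^{-1}h})^{\ast}\cdots
\]
More cleanly, one checks that $W^{\ast}$ with $W^{\ast}(h)=v_h$ satisfies $W^{\ast}(h)=v_g\alpha_g(W^{\ast}(g^{-1}h))$. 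Hence $v_g=W^{\ast}\alpha_g(W)$ holds exactly in $M^{\omega}_{\alpha}$; this is Shapiro's lemma realized concretely. So $v$ is a coboundary in the ultrapower.

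Next I pick a unitary representing sequence $(w^{\nu})_{\nu}$ of $W$ that is $(\alpha,\omega)$-equicontinuous, using Lemma \ref{lem:Borellift} or simply the definition of $M_{\alpha}^{\omega}$. By Lemma \ref{lem:limit}, $\lim_{\nu\to\omega}\int_G\|v_g-w^{\nu\ast}\alpha_g(w^{\nu})\|_{\varphi}^{\sharp}\,dg=0$. This gives approximate vanishing: for every $\varepsilon>0$ and every faithful normal state $\varphi$, there is $w\in\U(M)$ with $\int_G\|v_g-w^{\ast}\alpha_g(w)\|_{\varphi}^{\sharp}\,dg<\varepsilon$. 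I also need to control $w$ itself. Because $\alpha$ is an action rather than a cocycle action, $\mathrm{Ad}W$ computed through Lemma \ref{lem:AdW} yields $\|\mathrm{Ad}\,w(\varphi)-\varphi\|\lesssim\int_G\|\mathrm{Ad}\,v_h^{\ast}(\varphi)-\varphi\|\,dh$. More usefully, when $v$ is already close to $1$ in the integrated norm, $W$ is close to $1$ in $M\otimes L^{\infty}(G)$, so $w$ can be chosen close to $1$ in $\|\cdot\|_{\varphi}^{\sharp}$.

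Finally I run an iteration. Set $v^{(0)}=v$. Given the perturbed cocycle $v^{(n)}_g=w_n v^{(n-1)}_g\alpha_g(w_n^{\ast})$, each $v^{(n)}$ is again an $\alpha$-cocycle, and I choose $w_{n+1}$ with $\int_G\|v^{(n+1)}_g-1\|^{\sharp}_{\varphi}\,dg<2^{-n}$ and $\|w_{n+1}-1\|^{\sharp}_{\varphi}$ bounded by a constant times $\int_G\|v^{(n)}_g-1\|^{\sharp}\,dg$. The products $w_n\cdots w_1$ then converge $\ast$-strongly to a unitary $u$, and the limit cocycle is $1$ almost everywhere, so $v_g=u^{\ast}\alpha_g(u)$.

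The main obstacle is the estimate $\|w-1\|^{\sharp}_{\varphi}\lesssim\int_G\|v_g-1\|^{\sharp}_{\varphi}\,dg$. The $\sharp$-norm on $M^{\omega}$ restricted to $M\otimes L^{\infty}(G)$ is $\varphi\otimes\mu$ only when $L^{\infty}(G)$ commutes with $\varphi^{\omega}$. This holds because $L^{\infty}(G)\subset M_{\omega}$ lies in the centralizer of $\varphi^{\omega}$, which gives $\|W-1\|^{\sharp}_{\varphi^{\omega}}=(\int_G(\|v_h-1\|^{\sharp}_{\varphi})^2dh)^{1/2}$, and I would verify this carefully. The left-right $\sharp$ issue for $\alpha_g(w)$ is handled by the $\alpha$-invariance of $\int_G\varphi\circ\alpha_g\,dg$: I replace $\varphi$ by this $\alpha$-invariant faithful state from the start.
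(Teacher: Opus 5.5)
The first half of your proposal matches the paper's proof: the element $h\mapsto v_h$ of $M\otimes L^{\infty}(G)\subset M^{\omega}_{\alpha}$ untwists $v$ exactly, and a representing sequence gives approximate coboundaries. The gap is in the iteration, at the claim that $u_n=w_n\cdots w_1$ converges $\ast$-strongly to a unitary. Since $u_{n+1}-u_n=(w_{n+1}-1)u_n$, you have
\[
\|u_{n+1}-u_n\|_{\varphi}^{2}=\varphi\bigl(u_n^{\ast}(w_{n+1}-1)^{\ast}(w_{n+1}-1)u_n\bigr).
\]
This measures $w_{n+1}-1$ in the moving state $\varphi(u_n^{\ast}\,\cdot\,u_n)$, but your estimate only bounds $\|w_{n+1}-1\|_{\varphi}^{\sharp}$. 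That bound does control $u_{n+1}^{\ast}-u_n^{\ast}=u_n^{\ast}(w_{n+1}^{\ast}-1)$, so $u_n^{\ast}$ converges strongly. However, the limit is only an isometry, and in a properly infinite factor nothing in your estimates forces it to be unitary. Unless $\varphi$ is a trace (essentially the $\II_1$ case, where your argument does work), $\|\cdot\|_{\varphi}^{\sharp}$ is not invariant under multiplication by unitaries. Passing to an $\alpha$-invariant state does not help, because the offending unitaries are the accumulated products $u_n$, not the $\alpha_g$. For the same reason, $\int_G\|v_g-w^{\ast}\alpha_g(w)\|_{\varphi}^{\sharp}\,dg$ being small does not by itself give $\int_G\|wv_g\alpha_g(w^{\ast})-1\|_{\varphi}^{\sharp}\,dg$ small, although the latter can be read off directly from $Wv_g\alpha_g(W^{\ast})=1$ in $M^{\omega}$.

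The missing idea, which is how the paper organizes the argument, is to build the accumulated unitary into the one-step estimate. For every $\varepsilon>0$ and every $\tilde w\in\U(M)$ there is $w\in\U(M)$ with
\[
\int_G\|\tilde w(v_g\alpha_g(w)-w)\|_{\varphi}^{\sharp 2}\,dg<\varepsilon,\qquad \|\tilde w(w-1)\|_{\varphi}^{\sharp 2}<\int_G\|\tilde w(v_g-1)\|_{\varphi}^{\sharp 2}\,dg+\varepsilon .
\]
This comes from the same ultrapower element, because $\tilde w\in M$ and $\varphi^{\omega}$ restricts to $\varphi\otimes\mu$ on $M\otimes L^{\infty}(G)$. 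That restriction holds simply because $E(af)=a\tau_{\omega}(f)$, so no centralizer argument is needed for your ``main obstacle''. One then perturbs by $v^{(n+1)}_g=w^{(n)\ast}v^{(n)}_g\alpha_g(w^{(n)})$, sets $\tilde w^{(n)}=\tilde w^{(n-1)}w^{(n)}$, and applies the claim with $\tilde w=\tilde w^{(n-1)}$. The identity $\tilde w^{(n)}(v^{(n+1)}_g-1)=\tilde w^{(n-1)}\bigl(v^{(n)}_g\alpha_g(w^{(n)})-w^{(n)}\bigr)$ makes the twisted errors telescope. Hence $\|\tilde w^{(n+1)}-\tilde w^{(n)}\|_{\varphi}^{\sharp}=\|\tilde w^{(n)}(w^{(n+1)}-1)\|_{\varphi}^{\sharp}$ is summable, so the products are Cauchy in the $\ast$-strong topology. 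The limit is then a unitary $w$ with $v_g\alpha_g(w)=w$.
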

\begin{proof}
    Let $v$ be an $\alpha$-cocycle.
    We fix a faithful normal state $\varphi$ on $M$.
    First we claim that for every $\varepsilon>0$ and unitary $\tilde{w}\in\U(M)$, there exists $w\in\U(M)$ such that
    \[\int_G\|\tilde{w}(v_g\alpha_g(w)-w)\|_{\varphi}^{\sharp 2}\,dg<\varepsilon,\quad \|\tilde{w}(w-1)\|_{\varphi}^{\sharp 2}<\int_G\|\tilde{w}(v_g-1)\|_{\varphi}^{\sharp 2}\,dg+\varepsilon.\]
    Let $W\in M\otimes L^{\infty}(G)\subset M_{\alpha}^{\omega}$ be given by $W(h)=v_h$.
    Then we have $[v_g\alpha_g(W)](h)=v_g\alpha_g(W(g^{-1}h))=v_h=W(h)$ and hence $v_g\alpha_g(W)=W$, 
    \[\|\tilde{w}(W-1)\|_{\varphi^{\omega}}^{\sharp 2}=\int_G\|\tilde{w}(v_g-1)\|_{\varphi}^{\sharp 2}\,dg.\]
    Take a unitary representing sequence $(w^{\nu})_{\nu}$ of $W$ and the claim follows for some $w=w^{\nu}$ by Lemma \ref{lem:limit}.

    Next, from the above claim, we inductively construct a sequence of unitaries $w^{(n)}$, by letting $v^{(0)}=v$, $w^{(0)}=1$, $v_g^{(n)}=(w^{(n-1)})^{\ast}v_g^{(n-1)}\alpha_g(w^{(n-1)})$ and $\tilde{w}^{(n-1)}=w^{(0)}w^{(1)}\cdots w^{(n-1)}=\tilde{w}^{(n-2)}w^{(n-1)}$.
    We choose a unitary $w^{(n)}$ such that
    \[
    \int_G\|\tilde{w}^{(n-1)}(v_g^{(n)}\alpha_g(w^{(n)})-w^{(n)})\|_{\varphi}^{\sharp 2}\,dg<2^{-n-1},
    \]\[
    \|\tilde{w}^{(n-1)}(w^{(n)}-1)\|_{\varphi}^{\sharp 2}<\int_G\|\tilde{w}^{(n-1)}(v_g^{(n)}-1)\|_{\varphi}^{\sharp 2}\,dg+2^{-n}.
    \]
    Then the sequence $(\tilde{w}^{(n)})_n$ converges to some unitary $w\in\U(M)$.
    Indeed, we have 
    \begin{align*}
        \|\tilde{w}^{(n+1)}-\tilde{w}^{(n)}\|_{\varphi}^{\sharp 2} &= \|\tilde{w}^{(n)}(w^{(n+1)}-1)\|_{\varphi}^{\sharp 2}\\
        &< \int_G\|\tilde{w}^{(n)}(v_g^{(n+1)}-1)\|_{\varphi}^{\sharp 2}\,dg+2^{-n-1}\\
        &= \int_G\|\tilde{w}^{(n-1)}(v_g^{(n)}\alpha_g(w^{(n)})-w^{(n)})\|_{\varphi}^{\sharp 2}\,dg+2^{-n-1}<2^{-n}.
    \end{align*}
    Hence we see that $\tilde{w}^{(n-1)}(v_g^{(n)}\alpha_g(w^{(n)})-w^{(n)})=v_g\alpha_g(\tilde{w}^{(n)})-\tilde{w}^{(n)}$ tends to $v_g\alpha_g(w)-w$, and this is exactly zero.
\end{proof}

\subsection{Cocycle actions}

Sutherland has shown that any cocycle action on a properly infinite von Neumann algebra is a unitary perturbation of a genuine action.
Specifically, every 2-cohomology vanishes in the following cases.

\begin{prop}[{\cite[Theorem 4.1.3]{Su80}}]
    Let $(\alpha,u)\colon G\curvearrowright M$ be a cocycle action.
    If either $M$ is properly infinite, or $G$ is finite and $M$ is of type $\II_1$, then $u$ is a coboundary.
\end{prop}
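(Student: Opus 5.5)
The plan follows Sutherland's strategy in two cases. Properly infinite case: use the stabilization and $2\times 2$ matrix trick. For a finite group on a $\II_1$ factor: use the Ocneanu/Jones-type averaging argument with the shift to a tensor product by $\ell^\infty(G)$, or reduce to the properly infinite case.

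\textbf{Properly infinite case.} Let $\mathcal{H}=L^2(G)$ and consider the cocycle action $(\alpha\otimes\Ad\lambda, u\otimes 1)$ on $M\otimes\mathbb{B}(L^2(G))$. First observe that $(\alpha\otimes\id,u\otimes 1)$ on $M\otimes\mathbb{B}(L^2(G))$ is perturbed by $v_g=\lambda^u_g$, regarded in $M\otimes\mathbb{B}(L^2(G))$ through the covariant representation, which is legitimate since $\pi_\alpha(M)\subset M\otimes L^\infty(G)$. Concretely, the identities $\lambda^u_h\pi_\alpha(x)=\pi_\alpha(\alpha_h(x))\lambda^u_h$ and $\lambda^u_g\lambda^u_h=u_{g,h}\lambda^u_{gh}$ show that on the copy $\pi_\alpha(M)$ the twisted unitaries $\lambda^u$ realize the cocycle $u$ exactly. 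The key steps are the following.
\begin{enumerate}
\item Transport the cocycle action to $\pi_\alpha(M)$ and perturb it to a genuine action on $M\otimes\mathbb{B}(L^2(G))$. One does this by building, through $\lambda^u$, a unitary 1-cochain $V_g\in M\otimes\mathbb{B}(L^2(G))$ with $V_g(\alpha_g\otimes\id)(V_h)(u_{g,h}\otimes1)V_{gh}^*=1$. This is the standard Sutherland computation; equivalently, it applies the Shapiro lemma, since $u\otimes1$ takes values in a module containing an equivariant copy of $I(\U(M))$.
\item Since $M$ is properly infinite, choose an isometry-based isomorphism $M\cong M\otimes\mathbb{B}(L^2(G))$ given by a family of isometries $s_i\in M$ with $\sum s_is_i^*=1$. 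Pull the perturbation back through this isomorphism, using $w=\sum_i s_i\alpha_g(s_i^*)$-type unitaries to compare $\alpha$ with $\alpha\otimes\id$. This shows that $u$ itself is a coboundary.
\end{enumerate}

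\textbf{Finite group on a $\II_1$ factor.} Tensor with $\mathbb{M}_{|G|}$, where the above argument still works because $\mathbb{B}(\ell^2(G))$ is finite dimensional. The resulting cocycle $u\otimes1$ becomes a coboundary in $M\otimes\mathbb{M}_{|G|}$. To descend to $M$, pick a minimal projection $e_{11}$ and use that in a $\II_1$ factor the projections $e_{11}$ and $(\alpha_g\otimes\id)$-translates of it are equivalent with equal trace. This lets us find partial isometries that cut the perturbation down to $M$, and the diagonal corner then yields $v_g\in\U(M)$.

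\textbf{Main obstacle.} The hardest step is the descent from $M\otimes\mathbb{B}(\ell^2(G))$ back to $M$. In the properly infinite case this needs the measurable choice of isometries compatible with $\alpha$, which is available by absorbing $\mathbb{B}(\ell^2)$. In the $\II_1$ case it needs the trace-equality argument for the corner projection, and finiteness of $G$ is essential there. Since the statement is cited as \cite[Theorem 4.1.3]{Su80}, I would ultimately reference that theorem for the measurable-selection details.
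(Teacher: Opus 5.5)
The paper gives no proof of this proposition; it is quoted from \cite[Theorem 4.1.3]{Su80}. Your properly infinite case is essentially right once step (1) is repaired. Your descent in the finite-group case, however, has a genuine gap.

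\textbf{The descent for finite $G$.} Let $N=M\otimes\mathbb{M}_{|G|}$, and let $\beta_g=\Ad(\tilde{V}_g)\circ(\alpha_g\otimes\id)$ be the genuine action produced by step (1), so that $\tilde{V}_g(\alpha_g\otimes\id)(\tilde{V}_h)(u_{g,h}\otimes1)=\tilde{V}_{gh}$.

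Your mechanism produces nothing. First, $(\alpha_g\otimes\id)(1\otimes e_{11})=1\otimes e_{11}$, so those translates are trivial. If you mean the $\beta_g$-translates, choosing for each $g$ a partial isometry $w_g$ from $\beta_g(1\otimes e_{11})$ onto $1\otimes e_{11}$ and cutting down only gives a new cocycle action $\Ad(w_g)\circ\beta_g$ on the corner. Its 2-cocycle $w_g\beta_g(w_h)w_{gh}^{\ast}$ is uncontrolled, so you are back where you started.

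The missing idea is a \emph{single} $\beta$-invariant projection $p\in N^{\beta}$ with $\tau(p)=1/|G|$. It exists because $N^{\beta}$ is diffuse. Suppose $q$ were a minimal projection of $N^{\beta}$. Then $G$ would act ergodically on $qNq$. The elementary inequality $|G|^{-1}\sum_g\beta_g(x)\ge|G|^{-1}x$ for $x\ge0$ then allows at most $|G|$ orthogonal nonzero projections in $qNq$, contradicting that $qNq$ is of type $\II_1$. A maximal abelian subalgebra of $N^{\beta}$ therefore supplies projections of every trace.

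Now take one partial isometry $w$ with $w^{\ast}w=1\otimes e_{11}$ and $ww^{\ast}=p$, and set $x_g=w^{\ast}\tilde{V}_g(\alpha_g\otimes\id)(w)$. These are unitaries of the corner $(1\otimes e_{11})N(1\otimes e_{11})\cong M$. Using $\tilde{V}_g(\alpha_g\otimes\id)(p)=p\tilde{V}_g$ and the cocycle relation for $\tilde{V}$, one checks $x_g\alpha_g(x_h)u_{g,h}x_{gh}^{\ast}=1$. This is exactly where finiteness of $G$ enters. Note also that the statement allows non-factorial $M$ of type $\II_1$; there you would need $p\in N^{\beta}$ with center-valued trace $1/|G|$, and your sketch only treats factors.

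\textbf{Step (1).} The unitaries $\lambda^u_g$ do not perturb $(\alpha\otimes\id,u\otimes1)$. Their relations $\lambda^u_g\lambda^u_h=\pi_{\alpha}(u_{g,h})\lambda^u_{gh}$ and $\Ad(\lambda^u_g)\circ\pi_{\alpha}=\pi_{\alpha}\circ\alpha_g$ involve $\pi_{\alpha}$, not $u_{g,h}\otimes1$ and $\alpha_g\otimes\id$. For instance, for $\mathbb{T}$-valued $u$ the resulting 2-cocycle is $u_{g,h}^2$. The paper's Shapiro lemma does not rescue this either: it concerns abelian Polish $G$-modules, not the non-abelian 2-cocycle of a cocycle action.

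What works is the explicit cochain the paper itself uses in its Rohlin 2-cohomology proposition. Namely, $V_g(h)=u_{g,g^{-1}h}^{\ast}$ in $M\otimes L^{\infty}(G)$ perturbs $(\alpha\otimes\mathcal{L},u\otimes1)$ to an action. Since $\alpha_g\otimes\Ad\lambda_g=\Ad(1\otimes\lambda_g)\circ(\alpha_g\otimes\id)$, with $\lambda$ the left regular representation, the cochain $\tilde{V}_g=V_g(1\otimes\lambda_g)$ does the job on $M\otimes\mathbb{B}(L^2(G))$.

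Your step (2) is then correct and needs no measurable selection. Fix the isometries $s_i$ and put $\Phi(x\otimes e_{ij})=s_ixs_j^{\ast}$. Then $\Phi\circ(\alpha_g\otimes\id)\circ\Phi^{-1}=\Ad(w_g)\circ\alpha_g$ with $w_g=\sum_is_i\alpha_g(s_i^{\ast})$, and
\[w_g\alpha_g(w_h)u_{g,h}w_{gh}^{\ast}=\sum_is_iu_{g,h}s_i^{\ast}=\Phi(u_{g,h}\otimes1).\]
So $\Phi$ carries $(\alpha\otimes\id,u\otimes1)$ to a perturbation of $(\alpha,u)$, which finishes the properly infinite case.
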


In this subsection, we always assume that $M$ is a $\II_1$ factor and let $\|\cdot\|_2$ denote the canonical 2-norm on $M$ and $M^{\omega}$.
Recall that a cocycle action $(\alpha,u)\colon G\curvearrowright M$ on a factor is called \emph{strictly outer} if $M'\cap(M\rtimes_{\alpha,u}G)=\mathbb{C}$ holds, in analogy with actions.

\begin{prop}
    If a cocycle action $(\alpha,u)\colon G\curvearrowright M$ has the Rohlin property, then $u$ is a coboundary.
\end{prop}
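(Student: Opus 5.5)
We mimic the proof of the 1-cohomology vanishing theorem: first obtain an approximate version of the statement in $M^{\omega}_{\alpha}$ using the Rohlin tower $L^{\infty}(G)$, then correct step by step in $M$ and pass to a limit. Throughout, $M$ is a $\II_1$ factor and we use $\|\cdot\|_2$.

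\textbf{Step 1 (exact coboundary in the ultrapower).} Using the Rohlin embedding, regard $M\otimes L^{\infty}(G)\cong M\vee L^{\infty}(G)\subset M_{\alpha}^{\omega}$, on which $(\alpha,u)$ acts by $[\alpha_g(x)](h)=\alpha_g(x(g^{-1}h))$. Define $W_g\in\U(M\otimes L^{\infty}(G))$ by
\[W_g(h)=u_{g,g^{-1}h}^{\ast}.\]
This is the standard Shapiro-type trick, and it should give $W_g\alpha_g(W_h)u_{g,h}W_{gh}^{\ast}=1$. Indeed, evaluating at $k$ gives
\[u_{g,g^{-1}k}^{\ast}\,\alpha_g(u_{h,h^{-1}g^{-1}k}^{\ast})\,u_{g,h}\,u_{gh,(gh)^{-1}k}.\]
The cocycle identity $\alpha_g(u_{h,l})u_{g,hl}=u_{g,h}u_{gh,l}$ with $l=(gh)^{-1}k$ shows that this equals $1$. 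I would double-check the order and adjoints here, and adjust to $W_g(h)=u_{h^{-1},g}$ or a similar variant if needed.

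Hence, in $M^{\omega}_{\alpha}$, the cocycle $u$ is perturbed to a genuine action by the family $W$. The map $g\mapsto W_g$ is measurable. By Lusin's theorem it is continuous on large compact sets, so Lemma \ref{lem:Borellift} yields equicontinuous representing sequences $w_g^{\nu}$.

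\textbf{Step 2 (approximate perturbation in $M$).} Choose $\nu$ via Lemma \ref{lem:limit} so that
\[\iint\|w_g\alpha_g(w_h)u_{g,h}w_{gh}^{\ast}-1\|_2\,dg\,dh<\varepsilon.\]
We also want $w$ to be close to a prescribed perturbation, in order to make the process converge. To arrange this, replace $u$ by the current perturbed cocycle $u^{(n)}$ and note that $W$ can be estimated by
\[\int\|W_g-1\|_2^2\,dg=\iint\|u_{g,h}-1\|_2^2.\]
So the size of the correction is controlled by how far the current cocycle is from $1$. This is the exact analogue of the claim in the 1-cohomology proof.

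\textbf{Step 3 (iteration).} Iterate inductively with tolerances $2^{-n}$. This produces perturbations $v^{(n)}_g$ of $u$ such that:
\begin{itemize}
\item $u^{(n)}\to1$ in $L^2(G\times G,M)$;
\item the successive corrections satisfy $\int\|v^{(n+1)}_g-v^{(n)}_g\|_2\,dg<2^{-n+1}$.
\end{itemize}
Then $v^{(n)}$ converges in $L^2(G,M)$, and a subsequence converges almost everywhere, to a measurable $v\colon G\to\U(M)$. Passing to the limit gives $v_g\alpha_g(v_h)u_{g,h}v_{gh}^{\ast}=1$ almost everywhere, so $u$ is a coboundary.

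The main obstacle is the control in Step 2. One must bound $\int\|w^{(n+1)}_g-1\|_2$ by the defect of $u^{(n)}$ rather than by $u^{(n)}-1$ directly, since these are the same quantity only after the perturbation is applied. Care is also needed because the perturbations act through $\Ad(v_g)\circ\alpha_g$, which changes the action. I would handle this as in Lemma \ref{lem:step1}: use that the $2$-norm is invariant under trace-preserving automorphisms, and that $M^{\omega}_{\alpha}=M^{\omega}_{\alpha'}$ under perturbation, so that the Rohlin tower is still available at every stage.
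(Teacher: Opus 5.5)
Your proposal is correct and follows the paper's proof. The paper uses the same unitary $V_g(h)=u_{g,g^{-1}h}^{\ast}$ in $M\otimes L^{\infty}(G)\subset M_{\alpha}^{\omega}$ and lifts it via Lemma~\ref{lem:Borellift} to a Borel $v$ with small defect and with $\int_G\|v_g-1\|_2^2\,dg$ controlled by $\iint\|u_{g,h}-1\|_2^2\,dg\,dh$. It then iterates on the successively perturbed cocycle actions, which remain Rohlin, and takes the limit of the products $v^{(n)}\cdots v^{(1)}$. The concern in your last paragraph needs no special handling: the defect made small at step $n$ is by definition $u^{(n+1)}-1$, so bounding the next correction by $\iint\|u^{(n+1)}_{g,h}-1\|_2^2$ is exactly what closes the induction. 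Your explicit appeal to Lusin's theorem before invoking Lemma~\ref{lem:Borellift} fills in a continuity hypothesis the paper leaves implicit.
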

\begin{proof}
    First we claim that for any $\varepsilon>0$, there exists a Borel map $v\colon G\to\U(M)$ such that
    \[
    \iint_{G\times G}\|v_g\alpha_g(v_h)u_{g,h}v_{gh}^{\ast}-1\|_2^2\,dg\,dh < \varepsilon,\quad
    \int_G\|v_g-1\|_2^2\,dg < \iint_{G\times G}\|u_{g,h}-1\|_2^2\,dg\,dh+\varepsilon.
    \]
    To prove this, we define a map $V\colon G\to\U(M\otimes L^{\infty}(G))\subset\U(M^{\omega}_{\alpha})$ by $V_g(h)=u_{g,g^{-1}h}^{\ast}$.
    By the 2-cocycle identity, it follows that
    \[
    [V_g\alpha_g(V_h)u_{g,h}V_{gh}^{\ast}](k) = u_{g,g^{-1}k}^{\ast}\alpha_g(u_{h,h^{-1}g^{-1}k}^{\ast})u_{g,h}u_{gh,h^{-1}g^{-1}k}=1
    \]
    and
    \[
    \int_G\|V_g-1\|_2^2\,dg = \iint_{G\times G}\|u_{g,h}-1\|_2^2\,dg\,dh.
    \]
    Take a representing sequence $(v^{\nu})_{\nu}$ of $V$ by Lemma \ref{lem:Borellift}, and put $v=v^{\nu}$ for suitable $\nu$.
    This proves the claim.

    Now we can find a sequence of Borel maps $v^{(n)}\colon G\to\U(M)$ such that if we put $\alpha_g^{(0)}=\alpha_g$, $u_{g,h}^{(0)}=u_{g,h}$, $\alpha_g^{(n+1)}=\Ad(v_g^{(n)})\circ\alpha_g^{(n)}$ and $u_{g,h}^{(n+1)}=v_g^{(n)}\alpha_g^{(n)}(v_h^{(n)})u_{g,h}^{(n)}(v_{gh}^{(n)})^{\ast}$, then
    \begin{align*}
        \iint_{G\times G}\|v_g^{(n)}\alpha_g^{(n)}(v_h^{(n)})u_{g,h}^{(n)}(v_{gh}^{(n)})^{\ast}-1\|_2^2\,dg\,dh &< 2^{-n-3},\\
        \int_G\|v_g^{(n)}-1\|_2^2\,dg < \iint_{G\times G}\|u_{g,h}^{(n)}-1\|_2^2\,dg\,dh+2^{-n-2} &< 2^{-n-1}.
    \end{align*}
    The second inequality means that $\lim_{n\to\infty}v^{(n)}\cdots v^{(2)}v^{(1)}$ converges to some Borel unitary map $v\colon G\to\U(M)$.
    We can check that $v_g\alpha_g(v_h)u_{g,h}v_{gh}^{\ast}=1$ and this completes the proof.
\end{proof}

\begin{rem}
    Furthermore, if the 2-cocycle $u$ is close to 1, then we can choose a cochain $v$ staying close to 1.
    See \cite[Theorem 5.5]{MT16}.
\end{rem}

As a conclusion, we obtain the following theorem.

\begin{thm}
    Let $(\alpha,u)\colon G\curvearrowright R$ be a cocycle action of a compact group $G$ on the AFD $\II_1$ factor.
    If $(\alpha,u)$ is strictly outer, then $u$ is a coboundary.
\end{thm}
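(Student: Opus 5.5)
The plan is to reduce the statement to the preceding proposition, which says that a cocycle action with the Rohlin property has coboundary $u$. It therefore suffices to show that a strictly outer cocycle action $(\alpha,u)$ of $G$ on $R$ has the Rohlin property.

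Rather than prove this directly, I would tensor with the minimal action $\gamma\colon G\curvearrowright R$. By Proposition \ref{prop:minRohlin}, $\gamma$ has the Rohlin property, so $(\alpha\otimes\gamma,u\otimes1)$ on $R\otimes R\cong R$ is a Rohlin cocycle action. The equivariant embedding $L^{\infty}(G)\hookrightarrow R_{\omega,\gamma}\subset (R\otimes R)_{\omega,\alpha\otimes\gamma}$ works equally for cocycle actions. The preceding proposition then shows that $u\otimes 1$ is a coboundary for $\alpha\otimes\gamma$. What remains is to transfer this back from $(\alpha\otimes\gamma,u\otimes1)$ to $(\alpha,u)$.

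For the transfer I would show that $(\alpha,u)$ and $(\alpha\otimes\gamma,u\otimes 1)$ are cocycle conjugate, i.e.\ conjugate after a unitary perturbation. I see two routes, and the main obstacle is whichever one is used.

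The first route goes through the dual coaction. Strict outerness gives $R'\cap(R\rtimes_{\alpha,u}G)=\mathbb{C}$, so $R\rtimes_{\alpha,u}G$ is an AFD $\II_1$ factor (or type $\I$-by-$\II_1$, and AFD by amenability of compact $G$). The dual $(\hat{\alpha}_{\pi})_{\pi\in\hat{G}}$ is then a genuine coaction of $\hat{G}$ whose fixed-point algebra is $R$. I would apply the Masuda--Tomatsu classification \cite{MT07,MT10} of coactions of amenable discrete Kac algebras with suitable freeness on AFD factors, which is the same machinery behind Theorem \ref{thm:uniqmin}. Strict outerness of $(\alpha,u)$ should give exactly the required central freeness or minimality of the dual, since $R'\cap(R\rtimes G)=\mathbb{C}$ is equivalent to irreducibility of the inclusion $R\subset R\rtimes_{\alpha,u}G$. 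Takesaki duality for cocycle actions would then identify $(\alpha,u)$ with the bidual, which is a genuine action twisted by $G\curvearrowright\mathbb{B}(L^2(G))$. By the Shapiro-type observation in the Preliminaries (the cocycle lives in a copy of $I(\cdot)$), this is cocycle conjugate to an action.

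The second route is a McDuff-type argument. Strict outerness of the cocycle action on $R$ should produce a $G$-fixed copy of approximately central matrix algebras in $R_{\omega,\alpha}$. From these one absorbs $\gamma$, giving $\alpha\cong\alpha\otimes\gamma$ up to perturbation. This parallels the proof of the corollary on McDuff factors, using the uniqueness of minimal actions and the fact that $R_{\omega}^{\alpha}$ is still type $\II_1$.

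I would try the dual-coaction route first, because it relies only on existing results. The delicate point is checking that the hypotheses of \cite[Theorem 2.4]{MT10} (approximate innerness and central freeness of $\hat{\alpha}$) follow from $R'\cap(R\rtimes_{\alpha,u}G)=\mathbb{C}$. Once either route gives cocycle conjugacy of $(\alpha,u)$ with a Rohlin cocycle action, the preceding proposition finishes the proof, because both the Rohlin property and coboundarity of $u$ are preserved under perturbation.
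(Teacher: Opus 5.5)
\textbf{There is a genuine gap.} Your reduction (strict outerness $\Rightarrow$ Rohlin property $\Rightarrow$ coboundary, via the preceding proposition) is the same as the paper's, but you never prove the middle implication.

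Tensoring with $\gamma$ only shows that $u\otimes 1$ is a coboundary for $\alpha\otimes\gamma$ on $R\otimes R$. That says nothing about $u$ on $R$. Everything then depends on a cocycle conjugacy $(\alpha,u)\sim(\alpha\otimes\gamma,u\otimes 1)$, and neither of your routes establishes it. If you did have it, $(\alpha,u)$ would be Rohlin directly, so the $\gamma$-detour adds nothing.

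\emph{Route 1.} You do not verify the hypotheses of the Masuda--Tomatsu classification \cite{MT07,MT10}, namely approximate innerness and central freeness of $\hat{\alpha}$. Even granting them, Takesaki duality only gives a statement about the bidual on $R\otimes\mathbb{B}(L^2(G))$. That algebra is properly infinite, so Sutherland's theorem already kills the cocycle there, and the Shapiro observation adds nothing. Getting back down to the $\II_1$ factor $R$ is exactly the content of the theorem, and Route 1 has no mechanism for it. Also, $R\rtimes_{\alpha,u}G$ is of type $\II_{\infty}$, not $\II_1$, when $G$ is infinite.

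\emph{Route 2.} It rests on the unproved claim that strict outerness produces $G$-fixed approximately central matrix algebras in $R_{\omega,\alpha}$. Even granted, fixed matrix algebras would only let you absorb $\id_R$. Absorbing $\gamma$ needs approximately central \emph{equivariant} copies of $(\mathbb{M}_n,\Ad v)$, and you do not produce these.

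\textbf{The missing idea.} The Rohlin property is detected in $M_{\omega,\alpha}$, and this algebra does not change under stabilization by a type $\I$ factor. So tensor with a type $\I_{\infty}$ factor $Q$ carrying the \emph{trivial} action, not with $\gamma$. The paper's argument then runs as follows.
\begin{enumerate}
\item $(\alpha\otimes\id,u\otimes 1)$ on $R\otimes Q$ is still strictly outer, since $(R\otimes Q)\rtimes G\cong(R\rtimes_{\alpha,u}G)\otimes Q$.
\item Sutherland's theorem for properly infinite algebras perturbs it to a genuine strictly outer, hence minimal, action on the AFD $\II_{\infty}$ factor.
\item Theorem~\ref{thm:uniqmin} makes this action conjugate to $\id\otimes\gamma$, which is Rohlin by Proposition~\ref{prop:minRohlin}.
\item The Rohlin property is unchanged by perturbation, and $(R\otimes Q)_{\omega,\alpha\otimes\id}=R_{\omega,\alpha}$. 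So the equivariant copy of $L^{\infty}(G)$ already sits in $R_{\omega,\alpha}$, and $(\alpha,u)$ itself is Rohlin.
\item The preceding proposition then shows $u$ is a coboundary.
\end{enumerate}
No cocycle conjugacy on $R$ itself is needed.
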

\begin{proof}
    Let $Q$ be a separable factor of type $\I_{\infty}$.
    Consider $(\alpha\otimes\id,u\otimes 1)\colon G\curvearrowright R\otimes Q$.
    Since $R\otimes Q$ is properly infinite, the cocycle $u\otimes 1$ is a coboundary and $(\alpha\otimes\id,u\otimes 1)$ is perturbed to some strictly outer (hence minimal) action.
    Recall that such a minimal action on the AFD $\II_{\infty}$ factor $R\otimes Q$ is unique up to conjugacy (Theorem \ref{thm:uniqmin}).
    Therefore $(\alpha\otimes\id,u\otimes 1)$ has the Rohlin property.

    Since $(R\otimes Q)_{\omega,\alpha}=R_{\omega,\alpha}$ contains $L^{\infty}(G)$ equivariantly, it then follows that $(\alpha,u)$ also has the Rohlin property.
    The last proposition completes the proof.
\end{proof}

\begin{cor}
    Let $\alpha\colon G\curvearrowright M$ be a Rohlin action of a compact group $G$ on a factor $M$ and $c\in\mathrm{Z}^2(G;\mathbb{T})$ a 2-cocycle.
    Then the cocycle crossed product $M\rtimes_{\alpha,c}G$ is isomorphic to $M\rtimes_{\alpha}G$.
\end{cor}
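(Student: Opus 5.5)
The plan is to show that $(\alpha,c)$, with $c$ regarded as a $\mathbb{T}$-valued 2-cocycle in $\U(\Z(M))=\mathbb{T}$, is itself a Rohlin cocycle action. Then I would use cohomology vanishing to perturb it to a genuine action, and finally check that this genuine action coincides with $\alpha$ up to an inner perturbation, so that the crossed products are identified.

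First, $(\alpha,c)$ is a cocycle action: $\alpha_g\circ\alpha_h=\alpha_{gh}=\Ad(c_{g,h})\circ\alpha_{gh}$ because $c_{g,h}$ is a scalar, and the 2-cocycle identity for $c$ is exactly condition (2). The $\alpha$-part of $(\alpha,c)$ is the same map $\alpha$, so $M_{\omega,\alpha}$ and the equivariant embedding $L^{\infty}(G)\hookrightarrow M_{\omega,\alpha}$ are unchanged, and $(\alpha,c)$ has the Rohlin property.

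The Rohlin-cocycle-vanishing proposition is stated for $\II_1$ factors. If $M$ is a $\II_1$ factor, I apply it directly. If $M$ is properly infinite, I apply Sutherland's result \cite[Theorem 4.1.3]{Su80} instead. Either way there is a Borel $v\colon G\to\U(M)$ with $v_g\alpha_g(v_h)c_{g,h}v_{gh}^{\ast}=1$. By the perturbation isomorphism from the preliminaries, $M\rtimes_{\alpha,c}G\cong M\rtimes_{\alpha'}G$, where $\alpha'_g=\Ad(v_g)\circ\alpha_g$ is a genuine action.

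It remains to compare $\alpha'$ with $\alpha$. This is the main obstacle, since $v$ is not an $\alpha$-cocycle; it is only a cocycle twisted by the scalar $c$. I would handle it as follows.
\begin{enumerate}
\item Both $\alpha$ and $\alpha'$ are actions, and $\alpha'_g\circ\alpha_g^{-1}=\Ad v_g$ is inner, so it lies in $\clInt(M)$.
\item $\alpha'$ is a unitary perturbation of $\alpha$, so $M_{\omega,\alpha'}=M_{\omega,\alpha}$ with the same $G$-action (inner automorphisms act trivially on $M_\omega$). Hence $\alpha'$ is also Rohlin.
\item Theorem \ref{thm:classification} then gives $\sigma\in\clInt(M)$ with $\sigma\circ\alpha\circ\sigma^{-1}=\alpha'$.
\item The automorphism $\sigma$ extends to an isomorphism $M\rtimes_{\alpha}G\cong M\rtimes_{\alpha'}G$ via $x\mapsto\sigma(x)$ and $\lambda_g\mapsto\lambda_g$.
\end{enumerate}
Composing with the previous isomorphism gives $M\rtimes_{\alpha,c}G\cong M\rtimes_{\alpha}G$.

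If I wanted to avoid the classification theorem, I would instead try to show that $v$ can be chosen as a genuine $\alpha$-cocycle times a scalar. However, $\alpha'$ and $\alpha$ genuinely differ by the twist $c$, so the conjugacy route through Theorem \ref{thm:classification} seems the right one.
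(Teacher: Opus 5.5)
Your argument is correct, and it is essentially the proof the paper leaves implicit, since the corollary is stated there without proof. The steps are: $(\alpha,c)$ is Rohlin because the Rohlin property only depends on $\alpha$; the Rohlin 2-cocycle vanishing gives $v$ with $M\rtimes_{\alpha,c}G\cong M\rtimes_{\Ad v\circ\alpha}G$; and Theorem \ref{thm:classification} conjugates the Rohlin action $\Ad v\circ\alpha$ back to $\alpha$. Your case split is also needed, because that vanishing proposition is only proved for $\II_1$ factors. Sutherland covers properly infinite $M$, and type $\I_n$ factors are vacuous: there $M_\omega=\mathbb{C}$, which forces $G$ to be trivial.
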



\begin{thebibliography}{KLW23}

\bibitem[AH14]{AH14} H. Ando, U. Haagerup, \textit{Ultraproducts of von Neumann algebras}, J. Funct. Anal. \textbf{266} (2014), no.~12, 6842--6913.

\bibitem[AM13]{AM13} T. Austin, C. C. Moore, \textit{Continuity properties of measurable group cohomology}, Math. Ann. \textbf{356} (2013), no.~3, 885--937.

\bibitem[BK24]{BK24} M. Bischoff, P. Karmakar, \textit{Anomalies for conformal nets associated with lattices and $T$-kernels}, preprint, \texttt{arXiv:2406.09667} (2024).

\bibitem[Co77]{Co77} A. Connes, \textit{Periodic automorphisms of the hyperfinite factor of type $\II_1$}, Acta Sci. Math. (Szeged) \textbf{39} (1977), no.~1-2, 39--66.

\bibitem[De74]{De74} P. Deligne, \textit{Th\'eorie de {H}odge}. III, Inst. Hautes \'Etudes Sci. Publ. Math. No.~\textbf{44} (1974), 5--77.

\bibitem[FT01]{FT01} T. Falcone, M. Takesaki, \textit{The non-commutative flow of weights on a von Neumann algebra}, J. Funct. Anal. \textbf{182} (2001), no.~1, 170--206.

\bibitem[Ga17]{Ga17} E. Gardella, \textit{Crossed products by compact group actions with the Rokhlin property}, J. Noncommut. Geom. \textbf{11} (2017), no.~4, 1593--1626.

\bibitem[HW07]{HW07} I. Hirshberg, W. Winter, \textit{Rokhlin actions and self-absorbing C*-algebras}, Pacific J. Math. \textbf{233} (2007), no.~1, 125--143.

\bibitem[Iz03]{Iz03} M. Izumi, \textit{Canonical extension of endomorphisms of type $\III$ factors}, Amer. J. Math. \textbf{125} (2003), no.~1, 1--56.

\bibitem[Iz04a]{Iz04a} M. Izumi, \textit{Finite group actions on C*-algebras with the Rohlin property}. I, Duke Math. J. \textbf{122} (2004), no.~2, 233--280.

\bibitem[Iz04b]{Iz04b} M. Izumi, \textit{Finite group actions on C*-algebras with the Rohlin property}. II, Adv. Math. \textbf{184} (2004), no.~1, 119--160.

\bibitem[Je25]{Je25} D. Jekel, \textit{The unitary group of a $\II_1$ factor is SOT-contractible}, Math. Ann. (2025), 1--9.

\bibitem[Jo79]{Jo79} V. F. R. Jones, \textit{An invariant for group actions}, Alg\`ebres d'op\'erateurs, Lecture Notes in Math. \textbf{725} (1979), 237--253.

\bibitem[JT84]{JT84} V. F. R. Jones, M. Takesaki, \textit{Actions of compact abelian groups on semifinite injective factors}, Acta Math. \textbf{153} (1984), no.~3-4, 213--258.

\bibitem[KST92]{KST92} Y. Kawahigashi, C. E. Sutherland, M. Takesaki, \textit{The structure of the automorphism group of an injective factor and the cocycle conjugacy of discrete abelian group actions}, Acta Math. \textbf{169} (1992), no.~1-2, 105--130.

\bibitem[KT92]{KT92} Y. Kawahigashi, M. Takesaki, \textit{Compact abelian group actions on injective factors}, J. Funct. Anal. \textbf{105} (1992), no.~1, 112--128.

\bibitem[Ka00]{Ka00} K. Kawamuro, \textit{A Rohlin property for one-parameter automorphism groups of the hyperfinite $\II_1$ factor}, Publ. RIMS. \textbf{36} (2000), no.~5, 641--657.

\bibitem[Ki96]{Ki96} A. Kishimoto, \textit{A Rohlin property for one-parameter automorphism groups}, Comm. Math. Phys. \textbf{179} (1996), no.~3, 599--622.

\bibitem[KLW23]{KLW23} P. Kristel, M. Ludewig, K. Waldorf, \textit{A representation of the string 2-group}, preprint, \texttt{arXiv:2308.05139} (2023).

\bibitem[Ma25]{Ma25} A. Marrakchi, \textit{Almost almost periodic type $\III_1$ factors and their 3-cohomology obstructions}, preprint, \texttt{arXiv:2502.01516} (2025).

\bibitem[MT07]{MT07} T. Masuda, R. Tomatsu, \textit{Classification of minimal actions of a compact Kac algebra with amenable dual}, Comm. Math. Phys. \textbf{274} (2007), no.~2, 487--551.

\bibitem[MT10]{MT10} T. Masuda, R. Tomatsu, \textit{Classification of minimal actions of a compact Kac algebra with amenable dual on injective factors of type $\III$}, J. Funct. Anal. \textbf{258} (2010), no.~6, 1965--2025.

\bibitem[MT16]{MT16} T. Masuda, R. Tomatsu, \textit{Rohlin flows on von Neumann algebras}, Mem. Amer. Math. Soc. \textbf{244} (2016), no.~1153.

\bibitem[MT17]{MT17} T. Masuda, R. Tomatsu, \textit{Classification of actions of discrete Kac algebras on injective factors}, Mem. Amer. Math. Soc. \textbf{245} (2017), no.~1160, ix+118 pp.

\bibitem[Mo64]{Mo64} C. C. Moore, \textit{Extensions and low dimensional cohomology theory of locally compact groups}. I,II, Trans. Amer. Math. Soc. \textbf{113} (1964), 40--63; ibid., 64--86.

\bibitem[Mo76]{Mo76} C. C. Moore, \textit{Group extensions and cohomology for locally compact groups}. III,IV, Trans. Amer. Math. Soc. \textbf{221} (1976), no.~1, 1--33; ibid., 35--58.

\bibitem[NSW13]{NSW13} T. Nikolaus, C. Sachse, C. Wockel, \textit{A smooth model for the string group}, Int. Math. Res. Not. IMRN (2013), no.~16, 3678--3721.

\bibitem[Oc85]{Oc85} A. Ocneanu, \textit{Actions of Discrete Amenable Groups on Von Neumann Algebras}, Lecture notes in Math. \textbf{1138}, Springer-Verlag, 1985.

\bibitem[Po21]{Po21} S. Popa, \textit{On the vanishing cohomology problem for cocycle actions of groups on $\II_1$ factors}, Ann. Sci. \'Ec. Norm. Sup\'er. (4) \textbf{54} (2021), no.~2, 407--443.

\bibitem[Sh14]{Sh14} K. Shimada, \textit{Locally compact separable abelian group actions on factors with the Rokhlin property}, Publ. RIMS. \textbf{50} (2014), no.~3, 363--381.

\bibitem[St96]{St96} S. Stolz, \textit{A conjecture concerning positive Ricci curvature and the Witten genus}, Math. Ann. \textbf{304} (1996), no.~4, 785--800.

% \bibitem[ST04]{ST04} S. Stolz, P. Teichner, \textit{``What is an elliptic object?''}, London Math. Soc. Lecture Note Ser. \textbf{308} (2004), 247--343.

\bibitem[Su80]{Su80} C. E. Sutherland, \textit{Cohomology and extensions of von Neumann algebras}. II, Publ. RIMS, \textbf{16} (1980), 135--174.

\bibitem[Ta03]{Ta03} M. Takesaki, \textit{Theory of Operator Algebras II}, Encyclopedia Math. Sci. \textbf{125}, Springer-Verlag, Berlin, 2003.

\bibitem[Tu06]{Tu06} J. L. Tu, \textit{Groupoid cohomology and extensions}, Trans. Amer. Math. Soc. \textbf{358} (2006), no.~11, 4721--4747.

\bibitem[Va01]{Va01} S. Vaes, \textit{The unitary implementation of a locally compact quantum group action}, J. Funct. Anal. \textbf{180} (2001), no.~2, 426--480.

\bibitem[VV15]{VV15} S. Vaes, P. Verraedt, \textit{Classification of type $\III$ Bernoulli crossed products}, Adv. Math. \textbf{281} (2015), 296--332.

\bibitem[Wa06]{Wa06} A. Wassermann, \textit{Operator algebras and elliptic cohomology}, Trends in noncommutative geometry, NCGW03, 18 December 2006 (handwritten manuscript).

\end{thebibliography}
\end{document}